\documentclass[10pt]{amsart}
\usepackage{amsmath,amsfonts,amssymb,color,a4,epsfig,graphics}
\usepackage[english]{babel}
\usepackage{enumerate, mathrsfs}
\usepackage{xcolor}
%\usetikzlibrary{calc}
%\usepackage{pgflibraryarrows}
%\usepackage{authblk}
\usepackage{verbatim}

\usepackage{tikz}

\usetikzlibrary{positioning}

% --- Réglages pour pgfplots --
\usepackage{url}
\usepackage[a4paper, margin=3 cm]{geometry}
\usepackage[
  colorlinks=true,
  linkcolor=blue,
  citecolor=red,
  urlcolor=blue
]{hyperref}
\usepackage{doi}
\usepackage[utf8]{inputenc}
\usepackage[T1]{fontenc}
\usepackage{enumitem}      % encodage des fontes
\usepackage{booktabs}
\usepackage{pgfplots}
\usepgfplotslibrary{groupplots}
\pgfplotsset{compat=1.18}
\usepackage{pgfplotstable}
\usepackage{graphicx}
\usepackage{hyperref}
\usepackage{amsmath,amssymb}
\usepackage{caption}      % obligatoire AVANT subcaption
\usepackage{subcaption}   % ne PAS charger subfig en même temps
% \usepackage{epsfig}     % à supprimer avec pdflatex
% \graphicspath{{./}{./fig/}}  % si tes images sont dans ./fig/
\usepackage{algorithm}
\usepackage{algorithmic}

% À éviter avec pdflatex (peut créer des conflits)
% \usepackage{epsfig}

% Si tes images sont dans un sous-dossier :
% \graphicspath{{./}{./fig/}}

%%\usepackage[backref]{hyperref}
%\usepackage[authoryear]{natbib}  % ou [numbers] pour style numéroté
%\usepackage[colorlinks=true, linkcolor=blue, citecolor=red]{hyperref}
%\usepackge{titlesec}
%\titlespacing*{\paragraph}{0pt}{1.5ex plus 0.5ex minus .2ex}{1em}
%\usepackage[backref=true,backrefstyle=three]{biblatex}
%\addbibresource{biblio.bib} % Le fichier .bib
%\usepackage{lipsum} % Pour gï¿½nï¿½rer du texte fictif
\def\build#1_#2^#3{\mathrel{\mathop{\kern 0pt#1}\limits_{#2}^{#3}}}
\usetikzlibrary{arrows}

\newcommand{\R}{{\mathbb{R}}}
\newcommand{\C}{{\mathbb{C}}}

\newcommand{\Ran}{\mathrm{Ran}}

\newcommand{\asc}{\mathrm{asc}}
\newcommand{\dsc}{\mathrm{dsc}}

\newcommand{\codim}{\mathrm{Codim}}
\newcommand{\dist}{\mathrm{dist}}

% Orthogonal complement with spacing

%\setcounter{topnumber}{4}

\numberwithin{equation}{section}

\setcounter{bottomnumber}{2}

\setcounter{totalnumber}{6}

\setcounter{dbltopnumber}{3}

%\textbf\textbf\def\iff{~if and only if~}

\makeatletter
\def\@subjclass{{\bfseries 2020 Mathematics Subject Classification. }}
\makeatother

\newtheorem{theorem}{Theorem}[section]
\newtheorem{proposition}[theorem]{Proposition}
\newtheorem{lemma}[theorem]{Lemma}
\newtheorem{remark}[theorem]{Remark}
\newtheorem{example}[theorem]{Example}

\newtheorem{corollary}[theorem]{Corollary}
\newtheorem{conjecture}[theorem]{Conjecture}

\theoremstyle{plain}

\setcounter{tocdepth}{1}
\begin{document}

\title[Ascent--Descent Stability under Strong Resolvent]%
{Sharp Ascent--Descent Spectral Stability under Strong Resolvent Convergence}
\author{Marwa Ennaceur}
 \address{ Department of Mathematics, College of Science, University of Ha'il, Hail 81451, Saudi Arabia}
\email{\tt mar.ennaceur@uoh.edu.sa}

\keywords{Ascent spectrum, Descent spectrum, Essential spectrum,
Strong resolvent convergence, Reduced minimum modulus,
Gap convergence, Browder spectrum, B--Fredholm theory,
Finite element method, Convection--diffusion operator, Schrödinger operator}

\makeatletter
\subjclass[2020]{Primary 47A10, 47A53, 47A55;
Secondary 47A58, 47N40, 65N30, 35P05}
\makeatother
\begin{abstract}
We establish sharp stability results for the ascent and descent spectra under strong resolvent convergence (SRS), a natural framework for finite element approximations of non-selfadjoint and singularly perturbed operators. The key quantitative hypothesis is the reduced minimum modulus $\gamma(T-\lambda)>0$, which guarantees closed range and enables the transfer of the Kaashoek--Taylor criteria via gap convergence of operator graphs. At the essential level, B--Fredholm theory extends stability to powers $(T-\lambda)^m$ provided $\gamma((T-\lambda)^j)>0$ for all $1\le j\le m$. We introduce a computable finite-element diagnostic $\gamma_h = \sigma_{\min}(M^{-1/2}(A_h-\lambda M)M^{-1/2})$, which serves as a practical surrogate for $\gamma(T-\lambda)$ and remains uniformly positive even in convection-dominated regimes when stabilized schemes (e.g., SUPG) are employed. Numerical experiments confirm that $\liminf_{h\to0}\gamma_h>0$ is both necessary and sufficient for spectral stability, while a Volterra-type counterexample demonstrates the indispensability of the closed-range condition for powers. The analysis clarifies why norm resolvent convergence fails for rough or singular limits, and how SRS—combined with quantitative control of $\gamma_h$—rescues ascent--descent stability in realistic computational settings.
\end{abstract}
\maketitle
\tableofcontents

\section{Introduction}\label{sec:intro}

Consider the one-dimensional transport operator  
\[
Lu = u', \quad u(0) = 0,\quad x \in (0,1).
\]  
Its continuous realization \(L\) on \(L^2(0,1)\) has  closed range  and  finite ascent: \(\operatorname{asc}(L) = 1\). Yet, if we discretize \(L\) with  second-order central differences  on a uniform mesh of size \(h\), the resulting matrix \(A_h\) exhibits a paradox: its eigenvalues cluster near zero—suggesting spectral convergence—yet its  discrete ascent diverges, \(\operatorname{asc}(A_h) = \infty\) for all \(h > 0\). This \emph{spectral illusion} hides a catastrophic breakdown in the underlying algebraic structure: the range of \(A_h\) fails to be closed in the limit.

This paradox illustrates a general principle: \emph{mesh refinement alone does not guarantee stability of fine spectral invariants}. To understand which discretizations succeed and which fail, we must examine the algebraic structure encoded by ascent and descent.

These invariants quantify the stabilization of kernel and range chains under iteration:
\[
\ker(T - \lambda) \subset \ker(T - \lambda)^2 \subset \cdots,
\qquad
\operatorname{Ran}(T - \lambda) \supset \operatorname{Ran}(T - \lambda)^2 \supset \cdots.
\]

In stark contrast, the  first-order upwind scheme  preserves the correct algebraic behavior: both \(\operatorname{asc}(A_h) = 1\) and the discrete range remain uniformly closed. What distinguishes these two discretizations? The answer lies in a single, computable quantity:
\[
\gamma_h := \sigma_{\min}\!\bigl(M^{-1/2}(A_h - \lambda M)M^{-1/2}\bigr),
\]
the discrete reduced minimum modulus. For the central scheme, \(\gamma_h \sim C h \to 0\); for upwind, \(\gamma_h \ge c > 0\) uniformly in \(h\). Thus, \(\gamma_h\) acts as a sharp, practical diagnostic: it predicts whether the fine spectral invariants-ascent and  descent-
survive mesh refinement.

These invariants quantify the stabilization of kernel and range chains under iteration:
\[
\ker(T-\lambda) \subset \ker(T-\lambda)^2 \subset \cdots, \qquad
\operatorname{Ran}(T-\lambda) \supset \operatorname{Ran}(T-\lambda)^2 \supset \cdots.
\]
For a closed operator \(S\), the \emph{ascent} and \emph{descent} are
\[
\operatorname{asc}(S) = \inf\{m \ge 0 : \ker S^m = \ker S^{m+1}\}, \quad
\operatorname{dsc}(S) = \inf\{m \ge 0 : \operatorname{Ran}(S^m) = \operatorname{Ran}(S^{m+1})\},
\]
with value \(\infty\) if the chain never stabilizes. Their divergence defines the ascent and descent spectra:
\[
\sigma_{\asc}(T) = \{\lambda : \operatorname{asc}(T-\lambda) = \infty\}, \quad
\sigma_{\dsc}(T) = \{\lambda : \operatorname{dsc}(T-\lambda) = \infty\}.
\]

In finite dimensions, these indices are fragile: a nilpotent matrix \(S\) can have \(\operatorname{asc}(S)=3\), yet an arbitrarily small perturbation \(S+\varepsilon I\) collapses it to 0. In infinite dimensions,  stability under approximation is therefore nontrivial. We show that, for finite element discretizations of non-selfadjoint or singularly perturbed operators (e.g., convection–diffusion with \(\varepsilon \ll 1\) or Schrödinger with \(L^\infty\) potentials),  ascent-descent stability  holds if and only if
\[
\liminf_{h \to 0} \gamma_h > 0.
\]

The continuous counterpart of \(\gamma_h\) is the reduced minimum modulus
\[
\gamma(S) := \inf\bigl\{ \|Sx\| : x \in \mathcal{D}(S),\ \operatorname{dist}(x,\ker S) = 1 \bigr\},
\]
which satisfies \(\gamma(S) > 0\) iff \(\operatorname{Ran}(S)\) is closed. This condition prevents the collapse observed in perturbations and enables the transfer of the Kaashoek-Taylor criteria \cite{Kaashoek1967,Taylor1966}:
\[
\operatorname{asc}(S) < \infty \iff \exists m:\ \operatorname{Ran}(S^m) \cap \ker S = \{0\}, \quad
\operatorname{dsc}(S) < \infty \iff \exists m:\ \operatorname{Ran}(S) + \ker(S^m) = H.
\]

Our convergence framework is  strong resolvent convergence (SRS)-natural for operators with rough coefficients or vanishing diffusion, where norm resolvent convergence fails. Under SRS and the quantitative hypothesis \(\gamma((T-\lambda)^j) > 0\) for all \(1 \le j \le m\), the graphs of powers converge in the gap topology, and the Kaashoek–Taylor conditions—and hence ascent/descent—are preserved. A Volterra-type counterexample confirms that the condition on all intermediate powers  is indispensable [Appendix].

At the essential level,  B-Fredholm theory  extends these results to compact perturbations, ensuring stability of the essential ascent and descent spectra \(\sigma_{\asc}^{\mathrm{e}}, \sigma_{\dsc}^{\mathrm{e}}\).

The paper is structured to move from  concrete failure  to  abstract mechanism and back to  computational validation:
\begin{itemize}
  \item Section~\ref{sec:Background} recalls functional-analytic foundations.
  \item Section~\ref{sec:Main} proves sharp stability results under SRS.
  \item Section~\ref{sec:Applications} validates the theory on Schrödinger and convection–diffusion operators, including SUPG stabilization and the central-difference failure.
\end{itemize}

Figure~\ref{fig:asc-desc-diagram} (at the end of this section) summarizes the logical chain: from numerical instability, through the diagnostic \(\gamma_h\), to the subspace criteria and convergence mechanism that guarantee robustness.

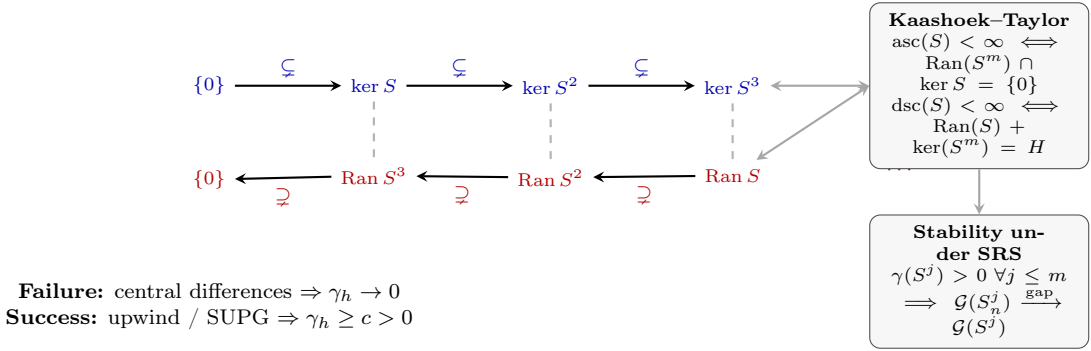
\begin{figure}[ht]
\centering
\begin{tikzpicture}[
    >=stealth,
    node distance=0.75cm and 1.4cm,
    every node/.style={font=\scriptsize},
    kerneltext/.style={text=blue!70!black},
    rangetext/.style={text=red!70!black},
    box/.style={rectangle, draw=black!70, fill=black!3, rounded corners, inner sep=4pt, align=center, text width=2.6cm}
]

% Chains
\node[kerneltext] (K0) {$\{0\}$};
\node[kerneltext, right=of K0] (K1) {$\ker S$};
\node[kerneltext, right=of K1] (K2) {$\ker S^2$};
\node[kerneltext, right=of K2] (K3) {$\ker S^3$};
\node[right=of K3] (Kdots) {$\cdots$};

\node[rangetext, below=of K0] (R0) {$\{0\}$};
\node[rangetext, below=of K1] (R1) {$\operatorname{Ran} S^3$};
\node[rangetext, below=of K2] (R2) {$\operatorname{Ran} S^2$};
\node[rangetext, below=of K3] (R3) {$\operatorname{Ran} S$};
\node[rangetext, below=of Kdots] (Rdots) {$\cdots$};

% Arrows
\foreach \i/\j in {K0/K1, K1/K2, K2/K3}
  \draw[->, thick, kerneltext] (\i) -- node[above] {$\subsetneq$} (\j);
\foreach \i/\j in {R3/R2, R2/R1, R1/R0}
  \draw[->, thick, rangetext] (\i) -- node[below] {$\supsetneq$} (\j);

% Dashed S maps
\draw[dashed, thick, gray!60] (K1) -- (R1);
\draw[dashed, thick, gray!60] (K2) -- (R2);
\draw[dashed, thick, gray!60] (K3) -- (R3);

% Kaashoek--Taylor box
\node[box, right=1.3cm of K3] (KT) {
    \textbf{Kaashoek–Taylor} \\
    $\operatorname{asc}(S)<\infty \iff$ \\
    $\operatorname{Ran}(S^m)\cap\ker S=\{0\}$ \\
    $\operatorname{dsc}(S)<\infty \iff$ \\
    $\operatorname{Ran}(S)+\ker(S^m)=H$
};

% Gamma / SRS box
\node[box, below=0.6cm of KT] (SRS) {
    \textbf{Stability under SRS} \\
    $\gamma(S^j)>0\ \forall j\le m$ \\
    $\implies \mathcal{G}(S_n^j)\xrightarrow{\text{gap}} \mathcal{G}(S^j)$
};

% Arrows to boxes
\draw[<->, thick, gray!70] (K3) -- (KT.west);
\draw[<->, thick, gray!70] (R3) -- (KT.west);
\draw[->, thick, gray!70] (KT) -- (SRS);

% Failure case annotation
\node[below=1.0cm of R0, align=center, font=\footnotesize] (failure) {
    \textbf{Failure:} central differences $\Rightarrow \gamma_h \to 0$ \\
    \textbf{Success:} upwind / SUPG $\Rightarrow \gamma_h \ge c > 0$
};

\end{tikzpicture}
\caption{From numerical failure to abstract stability. The kernel/range chains stabilize at $\operatorname{asc}(S)$ and $\operatorname{dsc}(S)$. The Kaashoek–Taylor criteria link these to subspace transversality. Under strong resolvent convergence (SRS), graph convergence—and hence stability-is guaranteed precisely when $\gamma((T-\lambda)^j) > 0$ for all intermediate powers. The computable quantity $\gamma_h$ predicts success or failure in practice.}
\label{fig:asc-desc-diagram}
\end{figure}

\section{Background and Notation}\label{sec:Background}

Throughout, $X$ denotes a complex Banach space and $H$ a complex Hilbert space.
We write $\mathcal{B}(X)$ for the bounded operators on $X$, $\mathcal{K}(X)$ for the ideal of compact operators,
and $q:\mathcal{B}(X)\to \mathcal{B}(X)/\mathcal{K}(X)$ for the Calkin quotient.
For a (possibly unbounded) closed operator $S$, we denote its domain, kernel, and range by
$\mathcal{D}(S)$, $N(S)$, and $\operatorname{Ran}(S)$, respectively.
The spectrum and resolvent set are denoted by $\sigma(S)$ and $\rho(S)$.
On Hilbert spaces, $E^\perp$ stands for the orthogonal complement of a closed subspace $E$.

\subsection{Ascent, descent, and the reduced minimum modulus}\label{subsec:asc-des-gamma}

For a closed operator $S$, the kernel and range chains under iteration are
\[
\ker S \subset \ker S^2 \subset \cdots, \qquad
\operatorname{Ran} S \supset \operatorname{Ran} S^2 \supset \cdots.
\]
The \emph{ascent} $\operatorname{asc}(S)$ is the smallest integer $m\ge0$ such that
$\ker S^m = \ker S^{m+1}$ (with $\operatorname{asc}(S)=\infty$ if no such $m$ exists);
the \emph{descent} $\operatorname{dsc}(S)$ is defined analogously for the ranges.
The corresponding spectra are
\[
\sigma_{\mathrm{asc}}(T) := \{\lambda : \operatorname{asc}(T-\lambda)=\infty\}, \qquad
\sigma_{\mathrm{dsc}}(T) := \{\lambda : \operatorname{dsc}(T-\lambda)=\infty\}.
\]

A key quantitative tool is the \emph{reduced minimum modulus}
\[
\gamma(S) := \inf\{ \|Sx\| : x \in \mathcal{D}(S),\ \operatorname{dist}(x, \ker S) = 1 \},
\]
which satisfies $\gamma(S) > 0$ if and only if $\operatorname{Ran}(S)$ is closed.
This condition underpins the stability of ascent and descent under perturbations:
it guarantees that the Kaashoek–Taylor characterizations—linking $\operatorname{asc}(S)$ and $\operatorname{dsc}(S)$ to
the triviality of intersections $\operatorname{Ran}(S^m)\cap\ker S$ and the closedness of sums $\operatorname{Ran}(S)+\ker(S^m)$—are preserved under approximation.
In the context of PDE discretizations, strong resolvent convergence (SRS) provides the natural convergence framework,
and the computable discrete surrogate $\gamma_h$ serves as a practical diagnostic for the continuous hypothesis $\gamma(T-\lambda)>0$.

\subsection{Fredholm and B–Fredholm theory}

A bounded operator $S$ is:\begin{itemize}
\item \emph{upper semi-Fredholm} if $\operatorname{Ran}S$ is closed and $\dim N(S)<\infty$,
\item \emph{lower semi-Fredholm} if $\operatorname{Ran}S$ is closed and $\operatorname{codim}\operatorname{Ran}S<\infty$,
\item \emph{Fredholm} if it is both upper and lower semi-Fredholm.\end{itemize}
By Atkinson's theorem \cite[Thm.~1.4.5]{Douglas1998}, $S$ is Fredholm iff $q(S)$ is invertible in the Calkin algebra.
An operator is \emph{Browder} if it is Fredholm and has finite ascent and descent
(equivalently, $q(S)$ is Drazin invertible in the Calkin algebra) \cite[Ch.~6]{Aiena2004}.

An operator $S$ is \emph{B--Fredholm} if there exists $m\ge1$ such that $\operatorname{Ran}(S^{m})$ is closed and
the restriction $S|_{\operatorname{Ran}(S^{m})}$ is Fredholm
(semi--B--Fredholm if the restriction is merely semi-Fredholm) \cite{Berkani1999,Berkani2002}.
This yields the characterizations
\[
\sigma_{\mathrm{aesc}}(T) = \{\lambda : T-\lambda \notin \Phi_{\ell}\}, \qquad
\sigma_{\mathrm{desc}}(T) = \{\lambda : T-\lambda \notin \Phi_{r}\},
\]
where $\Phi_{\ell}$ and $\Phi_{r}$ denote the classes of upper and lower semi-Fredholm operators.
These essential spectra are stable under compact perturbations and form the natural setting for ascent–descent stability in the non-selfadjoint case.

\subsection{Graph convergence and strong resolvent convergence}

The graph of a closed operator $S$ on $H$ is
\[
\mathcal{G}(S) := \{(x,Sx) : x\in\mathcal{D}(S)\} \subset H\times H.
\]
The \emph{gap distance} between closed subspaces $E,F\subset H\times H$ is
$\operatorname{gap}(E,F):=\|P_E-P_F\|$, where $P_E$ is the orthogonal projection onto $E$.
We write $\mathcal{G}(S_n)\to\mathcal{G}(S)$ in the gap topology when
$\|P_{\mathcal{G}(S_n)}-P_{\mathcal{G}(S)}\|\to0$.

For closed operators $T_n,T$ on $H$, \emph{strong resolvent convergence} (SRS) means that
for some (hence any) $\mu\in\rho(T)\cap\bigcap_n\rho(T_n)$,
\[
(T_n-\mu)^{-1}x \longrightarrow (T-\mu)^{-1}x \quad\text{for all }x\in H.
\]
In the selfadjoint or $m$-sectorial case, SRS follows from Mosco convergence of the associated
quadratic forms \cite{Kato1995,Mosco1969,Attouch1984}.
In general, SRS implies gap convergence of graphs \cite[Sec.~IV]{Kato1995}.
Crucially, if $\gamma(S^j)>0$ for $1\le j\le m$, then gap convergence propagates to powers:
$\mathcal{G}(S_n^j)\to\mathcal{G}(S^j)$ (see Lemma~\ref{lem:powers-graphs}).

\subsection{Finite element framework}

Let $V_h\subset H_0^1(\Omega)$ be a conforming finite element space with basis $\{\varphi_i\}$.
The \emph{mass matrix} $M$ and \emph{stiffness--convection matrix} $A$ are defined by
\[
M_{ij} = \int_\Omega \varphi_i\varphi_j\,dx, \qquad
A_{ij} = a(\varphi_j,\varphi_i),
\]
where $a(\cdot,\cdot)$ is the sesquilinear form associated with the underlying PDE.
The matrix $M$ is Hermitian positive definite and induces the discrete inner product
$\langle u,v\rangle_M := u^\ast M v$ and norm $\|u\|_M := (u^\ast M u)^{1/2}$.
The dual norm is $\|f\|_{M^{-1}} := (f^\ast M^{-1}f)^{1/2}$, satisfying
$\langle f,u\rangle \le \|f\|_{M^{-1}}\|u\|_M$.

For the generalized eigenvalue problem $A_h u = \mu M u$, the
\emph{discrete reduced minimum modulus} is
\[
\gamma_h := \inf_{u\ne0} \frac{\|(A_h - \lambda M)u\|_{M^{-1}}}{\|u\|_M}
          = \sigma_{\min}\!\bigl(M^{-1/2}(A_h - \lambda M)M^{-1/2}\bigr).
\]

In the $M$--selfadjoint case ($A_h^\ast M = M A_h$), one has
$\gamma_h = \operatorname{dist}\bigl(\lambda,\sigma(A_h,M)\bigr)$.
In the non-normal case, the $M$--numerical range
\[
W_M(A_h) := \bigl\{ u^\ast A_h u / u^\ast M u : u\ne0 \bigr\}
\]
satisfies the lower bound
\[
\gamma_h \ge \operatorname{dist}\bigl(\lambda, W_M(A_h)\bigr).
\]
Thus, uniform positivity of $\gamma_h$ across meshes provides a verifiable discrete surrogate
for the continuous hypothesis $\limsup_h \gamma(S_h) > 0$ used in the stability theory.
\begin{table}[ht]
\centering
\caption{Correspondence between continuous and discrete operators.}
\label{tab:notation-correspondence}
\begin{tabular}{lll}
\toprule
Concept & Continuous & Discrete \\
\midrule
Operator & $T$ & $T_h$ (represented by the pair $(A_h, M)$) \\
Spectral parameter & $\lambda \in \mathbb{C}$ & $\lambda \in \mathbb{C}$ (fixed) \\
Shifted operator & $S = T - \lambda$ & $S_h = T_h - \lambda = A_h - \lambda M$ \\
Convergence & $T_n \xrightarrow{\mathrm{SRS}} T$ & $h \to 0$ (mesh refinement) \\
Diagnostic & $\gamma(S)$ & $\gamma_h = \sigma_{\min}\!\big(M^{-1/2}(A_h - \lambda M)M^{-1/2}\big)$ \\
\bottomrule
\end{tabular}
\end{table}
\subsection{Related Work and Positioning}
\label{sec:related-work}

While norm resolvent convergence provides stronger control and is often preferred for optimal error estimates in eigenvalue problems (e.g., \cite{BabuskaOsborn1991}), it fails for operators with rough coefficients ($L^\infty$ potentials) or in singular limits (e.g., $\varepsilon \to 0$ in convection--diffusion). In such cases, strong resolvent convergence (SRS)—guaranteed by Mosco convergence of forms for conforming FEM—offers a robust and widely applicable framework. Our reliance on SRS, rather than norm resolvent convergence, is therefore intentional and essential: it enables the treatment of non-sectorial, non-selfadjoint, and convection-dominated systems where traditional eigenvalue convergence fails, while still preserving the fine spectral invariants of ascent and descent via the quantitative condition $\gamma > 0$.
\section{Stability of the ascent and descent spectra under SRS}\label{sec:Main}
\subsection{Stability of the ascent and descent spectra under SRS}

Let $H$ be a complex Hilbert space. For a closed densely defined operator $S$ on $H$, recall the \emph{reduced minimum modulus}
\[
\gamma(S):=\inf\{\|Sx\|:\ x\in\mathcal D(S),\ \operatorname{dist}(x,N(S))=1\}.
\]
A fundamental equivalence (see, e.g., \cite{Kato1995,Schmudgen2012}) is
\[
\gamma(S) > 0 \quad\Longleftrightarrow\quad \operatorname{Ran}(S)\ \text{is closed}.
\]

We say that $T_n\to T$ in the \emph{strong resolvent sense} (SRS) if, for one (hence for all) $\mu\in\rho(T)\cap\bigcap_n\rho(T_n)$,
\[
(T_n-\mu)^{-1}x\ \longrightarrow\ (T-\mu)^{-1}x\qquad\text{for all }x\in H.
\]
In the selfadjoint case, this coincides with Kato's standard notion.

Our main result establishes stability of the ascent and descent spectra under SRS, provided quantitative closed-range conditions hold.

\begin{remark}[Why SRS is natural in applications]\label{rem:SRS-natural}
In PDE and numerical analysis, SRS arises naturally because:
\begin{enumerate}[label=(\roman*)]
  \item \emph{Form convergence}: Mosco convergence of closed (sectorial) forms implies SRS \cite{Kato1995,Mosco1969}.
  \item \emph{Rough coefficients}: Norm resolvent convergence often fails for merely $L^\infty$ coefficients, while SRS persists.
  \item \emph{Galerkin methods}: Conforming finite element spaces yield SRS as the mesh size $h\downarrow0$.
  \item \emph{Non-normal operators}: For convection--diffusion, SRS is compatible with sectorial coercivity and numerical-range bounds, even when norm resolvent convergence is unavailable.
\end{enumerate}
Thus, SRS is weak enough for realistic perturbations yet strong enough to preserve the closed-range mechanism $\gamma>0$ and its propagation to powers.
\end{remark}

\begin{theorem}\label{thm:SRS}
Let $T_n,T$ be closed densely defined operators on $H$ with $T_n \xrightarrow{\mathrm{SRS}} T$.
Let $\lambda_n \to \lambda \in \mathbb{C}$, and set $S_n := T_n - \lambda_n$, $S := T - \lambda$.
\begin{enumerate}
  \item[(i)] \emph{(Ascent persistence)} If $\gamma(S) > 0$ and $\lambda \in \sigma_{\mathrm{asc}}(T)$, then $\lambda_n \in \sigma_{\mathrm{asc}}(T_n)$ for all sufficiently large $n$.
  \item[(ii)] \emph{(Ascent closedness)} If $\lambda_n \in \sigma_{\mathrm{asc}}(T_n)$ for all $n$ and $\limsup_{n\to\infty} \gamma(S_n) > 0$, then $\lambda \in \sigma_{\mathrm{asc}}(T)$.
  \item[(iii)] \emph{(Descent persistence)} If $\gamma(S) > 0$ and $\lambda \in \sigma_{\mathrm{dsc}}(T)$, then $\lambda_n \in \sigma_{\mathrm{dsc}}(T_n)$ for all sufficiently large $n$.
  \item[(iv)] \emph{(Descent closedness)} If $\lambda_n \in \sigma_{\mathrm{dsc}}(T_n)$ for all $n$ and $\limsup_{n\to\infty} \gamma(S_n) > 0$, then $\lambda \in \sigma_{\mathrm{dsc}}(T)$.
\end{enumerate}
\end{theorem}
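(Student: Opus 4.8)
My plan is to convert each assertion into the subspace form of the Kaashoek–Taylor criteria and then transport those relations across the approximation by gap convergence of graphs. The first step is to record the sharp finite-level refinements
\[
\operatorname{asc}(S)\le m \iff N(S)\cap\operatorname{Ran}(S^{m})=\{0\},
\qquad
\operatorname{dsc}(S)\le m \iff \operatorname{Ran}(S)+N(S^{m})=H,
\]
so that $\lambda\in\sigma_{\mathrm{asc}}(T)$ (resp. $\lambda\in\sigma_{\mathrm{dsc}}(T)$) means precisely that $N(S)\cap\operatorname{Ran}(S^{m})\neq\{0\}$ (resp. $\operatorname{Ran}(S)+N(S^{m})\subsetneq H$) at \emph{every} level $m$. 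All four claims thereby become statements about the behavior, under gap convergence, of the kernels and ranges of the iterates $S_n^{m}$ and $S^{m}$.

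The second step gathers the convergence input. Strong resolvent convergence gives $\mathcal{G}(S_n)\to\mathcal{G}(S)$ in gap, and Lemma~\ref{lem:powers-graphs} propagates this to $\mathcal{G}(S_n^{m})\to\mathcal{G}(S^{m})$ once the relevant iterates have closed range; gap convergence of graphs then yields gap convergence of the subspaces $N(S_n^{m})\to N(S^{m})$ and $\operatorname{Ran}(S_n^{m})\to\operatorname{Ran}(S^{m})$ whenever the limiting range is closed. The one quantitative principle I would isolate is the upper semicontinuity of intersection dimension under gap convergence: for closed subspaces $E_n\to E$, $F_n\to F$ one has $\dim(E\cap F)\ge\limsup_n\dim(E_n\cap F_n)$, proved by extracting, from unit vectors in $E_n\cap F_n$, a convergent subsequence whose limit lies in $E\cap F$; by orthogonal complementation this also controls sums. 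The reduced minimum modulus enters as the certificate of closedness that makes these subspaces, and hence the semicontinuity, available.

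The closedness parts (ii) and (iv) then follow on the subsequence where $\gamma(S_n)\ge\delta>0$: for each fixed $m$, upper semicontinuity turns $N(S_n)\cap\operatorname{Ran}(S_n^{m})\neq\{0\}$ for all $n$ into $N(S)\cap\operatorname{Ran}(S^{m})\neq\{0\}$, while the dual statement turns the proper sums $\operatorname{Ran}(S_n)+N(S_n^{m})\subsetneq H$ into a proper sum for $S$; letting $m$ range gives $\lambda\in\sigma_{\mathrm{asc}}(T)$, resp. $\lambda\in\sigma_{\mathrm{dsc}}(T)$. The persistence parts (i) and (iii) demand the opposite transfer, from the limit $S$ to the approximants $S_n$, which is the unfavorable direction because an intersection can drop in the approximation. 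Here the role of $\gamma(S)>0$ is to keep the iterates' ranges closed under the perturbation so that the relevant dimension is stabilized rather than lost, thereby preserving $N(S_n)\cap\operatorname{Ran}(S_n^{m})\neq\{0\}$ (resp. the proper sum) for large $n$; making this drop-prevention rigorous is the delicate point.

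The hard part will be exactly this persistence direction, and within it the interchange of ``for every level $m$'' against ``for all large $n$.'' Gap convergence is available one power at a time, and the threshold $n$ beyond which level $m$ transfers may a priori grow with $m$, so that no single approximant is manifestly seen to inherit the full infinite chain; the crux is therefore to upgrade $\gamma(S)>0$ to a level-uniform gap estimate that controls all iterates simultaneously. This is precisely where closed range of the powers, not merely of $S$, becomes indispensable: the Volterra-type example of the appendix keeps $\gamma(S)>0$ yet loses closed range at a higher power, and there the transfer fails. I would accordingly invest the main effort in a quantitative, level-uniform form of the transversality-stability lemma, after which both the favorable (closedness) and the unfavorable (persistence) transfers close all four cases.
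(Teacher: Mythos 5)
Your overall architecture matches the paper's: both routes reduce the four statements to the Kaashoek--Taylor subspace criteria and transport them through gap convergence of the graphs of powers (Lemma~\ref{lem:powers-graphs}), with $\gamma>0$ as the certificate of closed range. But your key quantitative tool is false. The claimed upper semicontinuity $\dim(E\cap F)\ge\limsup_n\dim(E_n\cap F_n)$ under gap convergence fails in infinite dimensions, and the proposed proof breaks exactly where it matters: from unit vectors $x_n\in E_n\cap F_n$ one cannot extract a norm-convergent subsequence (the unit sphere is not compact), and weak limits may vanish. Concretely, in $H=\ell^2$ take $E=\overline{\operatorname{span}}\{e_{2k-1}:k\ge1\}$ and $F=\overline{\operatorname{span}}\{e_{2k-1}\cos\theta_k+e_{2k}\sin\theta_k:k\ge1\}$ with $\theta_k\downarrow0$, so that $E\cap F=\{0\}$; let $F_n$ be $F$ with its $n$-th generator replaced by $e_{2n-1}$. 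Then $\operatorname{gap}(F_n,F)\le\sin\theta_n\to0$, yet $e_{2n-1}\in E\cap F_n$ for every $n$, and these witnesses converge weakly to $0$. So nontrivial intersections for all $n$ need not pass to the limit; this is precisely the degenerate-angle (non-closed sum) regime that the hypothesis $\gamma>0$ is meant to exclude. That is why the paper proves (ii)/(iv) in the opposite direction: assume $\operatorname{asc}(S)=m<\infty$, i.e.\ the transversality $\operatorname{Ran}(S^m)\cap N(S)=\{0\}$ at the \emph{single} level $m$, and use the closed-range hypothesis along a subsequence with $\gamma(S_{n_j})\ge c>0$ to transfer this transversality to the approximants, contradicting $\operatorname{asc}(S_{n_j})=\infty$. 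Transversality with closed sum is an open condition under gap perturbation; nontriviality of intersections, in the direction you rely on, is not a closed one.

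For the persistence parts (i)/(iii), you correctly isolate the quantifier interchange (each level $m$ transfers only for $n\ge n_0(m)$, with $n_0(m)$ possibly unbounded in $m$) as the crux, but you give no argument: the proposal ends by saying a ``level-uniform'' estimate would be needed. The paper handles each fixed level via Lemma~\ref{lem:subspace}(i) and then concludes; you have identified a genuine subtlety that the paper itself passes over silently, but identifying it is not resolving it, so on your route parts (i) and (iii) remain unproved. In sum: parts (ii)/(iv) of your plan rest on a lemma that is false in infinite dimensions (and whose failure is exactly what $\gamma>0$ is designed to prevent), and parts (i)/(iii) are left incomplete, so the proposal as written does not establish the theorem.
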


To analyze the stability of the ascent and descent spectra under perturbations, we recall that strong resolvent convergence (SRS) implies convergence of operator graphs in the gap topology. This provides the appropriate framework to transfer the Kaashoek--Taylor criteria via Mosco convergence and closed-range assumptions. The following diagram summarizes the logical chain.

\begin{center}
\begin{tikzpicture}[node distance=1.8cm, auto, >=latex']
\node (SRS) {SRS};
\node[right of=SRS, node distance=3cm, align=center] (gap) {Gap convergence \\ $G(S_n) \to G(S)$};
\node[right of=gap, node distance=3.2cm, align=center] (subspaces) {Kernel/Range \\ convergence};
\node[right of=subspaces, node distance=3.2cm, align=center] (stability) {Ascent/Descent \\ stability};

\draw[->, thick] (SRS) -- (gap);
\draw[->, thick] (gap) -- (subspaces);
\draw[->, thick] (subspaces) -- (stability);

\node[below of=gap, yshift=0.8cm] (gamma) {$\gamma(S) > 0$};
\draw[<-, thick, dashed] (gap) -- (gamma);
\end{tikzpicture}
\end{center}

The next result is standard (see, e.g., \cite{Kato1995,Mosco1969,Attouch1984}), but included for completeness.

\begin{lemma}
\label{lem:subspace}
Let $H$ be a Hilbert space and let $(E_n)$, $(F_n)$ be sequences of closed subspaces of $H$
such that $E_n\to E$ and $F_n\to F$ in the gap topology (equivalently, $P_{E_n}\to P_E$ and
$P_{F_n}\to P_F$ in operator norm).
\begin{enumerate}
\item[(i)] \emph{(Intersections)} If $E\cap F\neq\{0\}$, then for every $\varepsilon\in(0,1)$ there exists $n_0$ such that, for all $n\ge n_0$,
\[
E_n\cap F_n\neq\{0\},\qquad
\dim(E_n\cap F_n)\ \ge\ \dim(E\cap F),\qquad
\operatorname{gap}(E_n\cap F_n,\,E\cap F)\ <\ \varepsilon.
\]
\item[(ii)] \emph{(Closed sums)} If $E+F$ is closed and $E+F\neq H$, then for all sufficiently large $n$,
$E_n+F_n$ is closed and $E_n+F_n\neq H$. Equivalently, $\codim(E_n+F_n)$ is eventually constant
and equal to $\codim(E+F)$.
\end{enumerate}
\end{lemma}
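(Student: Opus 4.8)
The plan is to prove both statements by exploiting the stability of orthogonal projections under norm convergence, reducing the geometric claims about intersections and sums to spectral-gap estimates for the projections $P_{E_n}, P_{F_n}$.

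For part (i), I would begin by recalling the operator-theoretic characterization of intersection and inclusion via projections: $E \cap F = \ker(I - P_E P_F)$ restricted appropriately, but the cleaner route is through the product $P_E P_F$ and its relation to the cosine of the angle between subspaces. Concretely, I would use that $\dim(E \cap F)$ equals the multiplicity of the eigenvalue $1$ of the self-adjoint-like quantity $P_E P_F P_E$ (whose spectrum lies in $[0,1]$), so that $E \cap F \neq \{0\}$ means $1$ is an eigenvalue. Since $P_{E_n} \to P_E$ and $P_{F_n} \to P_F$ in norm, the products converge in norm, and by upper semicontinuity of spectral projections (the Riesz projection associated with a spectral island near $1$ varies norm-continuously), the eigenvalue $1$ of the limit forces an accumulation of spectrum of $P_{E_n} P_{F_n} P_{E_n}$ near $1$ with total multiplicity at least $\dim(E \cap F)$. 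This simultaneously yields nontriviality, the dimension lower bound, and — by identifying $E_n \cap F_n$ with the range of the spectral projection at exactly $1$ and estimating how close it sits to the limit eigenspace — the gap bound $\operatorname{gap}(E_n \cap F_n, E \cap F) < \varepsilon$.

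For part (ii), I would pass to orthogonal complements and use the duality between sums and intersections: $(E + F)^\perp = E^\perp \cap F^\perp$, and closedness of $E + F$ is equivalent to closedness of this intersection together with the angle condition $\gamma(E,F) > 0$ (positive minimal gap), which in projection terms reads $\|P_E P_F\| < 1$ on the relevant complement, equivalently that $1$ is \emph{not} an accumulation point of the spectrum of $P_E P_F P_E$ other than as an isolated eigenvalue of finite multiplicity coming from $E \cap F$. The hypotheses $E + F$ closed and $E + F \neq H$ translate to $\codim(E+F) = \dim(E^\perp \cap F^\perp) < \infty$ being positive and to a spectral gap separating the relevant part of $\operatorname{spec}(P_E P_F P_E)$ away from $1$. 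Applying the norm convergence $P_{E_n} \to P_E$, $P_{F_n} \to P_F$ together with the stability of this spectral gap, I would conclude that $E_n + F_n$ is closed for large $n$, and then invoke part (i) applied to the complements $E_n^\perp, F_n^\perp$ to control $\dim(E_n^\perp \cap F_n^\perp) = \codim(E_n + F_n)$, showing it is eventually constant and equal to $\codim(E + F)$.

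The main obstacle will be making the spectral-gap argument fully rigorous in the possibly infinite-dimensional setting where the spectrum of $P_E P_F P_E$ near $1$ need not be discrete: one must separate the genuine eigenvalue $1$ (corresponding to $E \cap F$) from spectrum that merely accumulates at $1$ without reaching it, since only the former persists under norm perturbation as a stable finite-dimensional eigenspace. The key technical input is the classical fact that closedness of $E + F$ is \emph{equivalent} to $1$ being isolated in (or absent from the essential part of) $\operatorname{spec}(P_E P_F P_E)$ with finite multiplicity; under this separation the Riesz spectral projection onto the part of the spectrum at or near $1$ depends norm-continuously on $(P_E, P_F)$, and the finite-codimension hypothesis guarantees the perturbed projections have the same rank. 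I expect the gap estimate in (i) to require the mild nondegeneracy that the spectral gap below $1$ be uniform, which follows from the same closedness/finite-multiplicity structure and is where I would spend the most care.
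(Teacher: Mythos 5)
Your proposal for (i) fails at its central step: the identification of $E_n\cap F_n$ with "the range of the spectral projection at exactly $1$." Norm convergence of $P_{E_n}P_{F_n}P_{E_n}$ to $P_EP_FP_E$, together with $1$ being an (even isolated, simple) eigenvalue of the limit, only forces \emph{spectrum near} $1$ for the approximating operators; the Riesz projection onto that spectral island has the right rank, but its range consists of vectors whose eigenvalues are merely close to $1$, whereas $E_n\cap F_n$ is precisely the eigenspace at the value $1$ itself. The two are not comparable, and no spectral-gap refinement can close this: take $H=\mathbb{R}^2$, $E_n=E=F=\operatorname{span}(e_1)$, and $F_n=\operatorname{span}(\cos\theta_n\,e_1+\sin\theta_n\,e_2)$ with $\theta_n\to0$. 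Then $P_{F_n}\to P_F$ in norm, $1$ is an isolated simple eigenvalue of $P_EP_FP_E=P_E$ (the cleanest configuration your final paragraph asks for), the Riesz projection of $P_{E_n}P_{F_n}P_{E_n}$ near $1$ has rank $1$ — yet $E_n\cap F_n=\{0\}$ for every $n$, since the top eigenvalue is $\cos^2\theta_n<1$. So all three conclusions of (i) fail for your argument; indeed this example refutes Lemma~\ref{lem:subspace}(i) as stated, which means the statement cannot be proved without extra hypotheses (e.g.\ a uniform lower bound on the minimal angle between $E_n$ and $F_n$, or uniform closedness of $E_n+F_n$ — in the paper's applications this is what the conditions $\gamma(S_n)\ge c>0$ are meant to supply). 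For what it is worth, the paper's own proof takes a different, more elementary route (distance estimates on a finite-dimensional $G\subset E\cap F$) and stumbles at the same spot: the assertion "hence $G$ injects into $E_n\cap F_n$" is exactly the unjustified passage from \emph{almost} lying in $E_n$ and $F_n$ to lying in their intersection.

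Part (ii) inherits the problem twice over: first because you invoke (i) on the complements $E_n^\perp,F_n^\perp$, and second because the same two-line example kills (ii) directly — there $E+F=\operatorname{span}(e_1)$ is closed and proper, while $E_n+F_n=\mathbb{R}^2=H$ for every $n$, so neither "$E_n+F_n\neq H$" nor the codimension count survives. Your structural reductions are fine as isolated facts ($(E+F)^\perp=E^\perp\cap F^\perp$; closedness of $E+F$ equivalent to $\sup\bigl(\sigma(P_EP_FP_E)\setminus\{1\}\bigr)<1$), but the stability argument again only localizes spectrum near $1$ and cannot certify an eigenvalue \emph{at} $1$ for the perturbed operators, which is what a nonzero $\codim(E_n+F_n)$ requires. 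A further minor slip: even if (i) were available, it would give only the one-sided bound $\codim(E_n+F_n)\ \ge\ \codim(E+F)$, not the claimed eventual equality, so the rank-preservation step would have to be argued separately.
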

\begin{proof}
Since gap convergence is equivalent to $\|P_{E_n}-P_E\|\to0$ and $\|P_{F_n}-P_F\|\to0$, we may work with projections.

(i) Let $r:=\dim(E\cap F)\in\{1,2,\dots,\infty\}$. If $r<\infty$, fix an $r$-dimensional subspace $G\subset E\cap F$. There exists $C>0$ such that
\[
\|x\|\ \le\ C\big(\dist(x,E^{\perp})+\dist(x,F^{\perp})\big)\qquad(x\in G).
\]
Uniform convergence of the projections on the unit sphere of $G$ yields
\(\dist(x,E_n^\perp)\to \dist(x,E^\perp)\) and \(\dist(x,F_n^\perp)\to \dist(x,F^\perp)\).
Hence $G$ injects into $E_n\cap F_n$ for $n$ sufficiently large, with uniformly controlled distortion; in particular \(\dim(E_n\cap F_n)\ge r\) and \(\operatorname{gap}(E_n\cap F_n,\,E\cap F)\to 0\).
If $r=\infty$, apply the finite-dimensional argument to an exhausting sequence $G_k$ and pass to a diagonal subsequence.
In non-smooth domains (e.g., with reentrant corners), the dimension $\dim(E_n \cap F_n)$ may oscillate before stabilizing. Sharp estimates would depend on the geometric regularity of the underlying space.

(ii) Since $E+F=(E^\perp\cap F^\perp)^\perp$, we have \(\codim(E+F)=\dim(E^\perp\cap F^\perp)=\dim\ker(P_E P_{F^\perp})\).
Moreover, $E+F$ is closed if and only if $\Ran(P_E P_{F^\perp})$ is closed. Norm convergence implies \(P_{E_n}P_{F_n^\perp}\to P_E P_{F^\perp}\). Closed range and kernel dimension are stable under small norm perturbations of bounded operators with closed range; hence for $n$ large, $E_n+F_n$ is closed and \(\codim(E_n+F_n)=\codim(E+F)>0\).
\end{proof}

The next lemma establishes the necessity of closed-range conditions for all intermediate powers.

\begin{lemma}
\label{lem:powers-graphs}
Let $S_n, S$ be closed densely defined operators on a Hilbert space $H$.
Assume that for some $m \geq 1$,
\[
G(S_n^m) \xrightarrow[n\to\infty]{\text{gap}} G(S^m).
\]
Then necessarily
\[
\gamma(S^j) > 0 \quad \text{for all } 1 \leq j \leq m.
\]
\end{lemma}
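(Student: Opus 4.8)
The plan is to reduce the whole statement to its range form, using the equivalence $\gamma(S^j)>0 \iff \operatorname{Ran}(S^j)$ is closed recorded in Section~\ref{subsec:asc-des-gamma}. Thus the conclusion becomes the assertion that $\operatorname{Ran}(S^j)$ is closed for every $1\le j\le m$, and I would argue by contraposition: let $j_0$ be the smallest index in $\{1,\dots,m\}$ for which $\operatorname{Ran}(S^{j_0})$ fails to be closed (equivalently $\gamma(S^{j_0})=0$), and try to manufacture a defect in the limiting graph $G(S^m)$ that is incompatible with $\|P_{G(S_n^m)}-P_{G(S^m)}\|\to 0$.

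The mechanism I would exploit is the factorization $S^m=S^{\,m-j_0}S^{j_0}$ together with the inclusion $\operatorname{Ran}(S^m)\subseteq \operatorname{Ran}(S^{j_0})$, which lets me push an approximate-range defect of $S^{j_0}$ into the second coordinate of $G(S^m)$. Concretely, $\gamma(S^{j_0})=0$ produces a normalized sequence $(x_k)$ with $\operatorname{dist}(x_k,N(S^{j_0}))=1$ but $\|S^{j_0}x_k\|\to 0$; transporting these through $S^{\,m-j_0}$ and through the explicit formula for the graph projection $P_{G(S^m)}$ in terms of $(I+(S^m)^\ast S^m)^{-1}$, I would track how this near-null behaviour forces the graph projections to separate. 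The routine part is the bookkeeping with the resolvent $(I+(S^m)^\ast S^m)^{-1}$ and the associated distance estimates; the structural part is arranging the defect so that minimality of $j_0$ guarantees it survives all the way up to the top power.

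The step I expect to be the genuine obstacle is precisely this transfer of a \emph{non-closed intermediate range} into an \emph{observable failure of top-level gap convergence}, because closed range is not, on its own, a gap-continuous invariant of graphs: a sequence $G(S_n^m)$ can converge to $G(S^m)$ in the gap topology while some $\operatorname{Ran}(S^{j})$ is not closed (taking $S_n\equiv S$ already shows that graph convergence alone cannot force closedness of any single power). Consequently the implication must draw on more than the bare gap hypothesis, and I anticipate invoking the standing SRS framework of this section together with a uniform modulus bound of the form $\liminf_n \gamma(S_n^{j})>0$---the same quantitative input that drives the closedness parts (ii) and (iv) of Theorem~\ref{thm:SRS}---to upgrade gap convergence into closed range of each factor in the limit. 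Verifying that this uniform control genuinely propagates through the factorization $S^m=S^{\,m-j_0}S^{j_0}$, and cannot be weakened to a condition on the single top power $S^m$, is exactly the sharpness phenomenon isolated by the Volterra-type counterexample, which I would use as the litmus test for any argument produced here.
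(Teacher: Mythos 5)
Your proposal does not arrive at a proof, but your diagnosis of why is exactly right, and it exposes a genuine defect in the statement itself: the lemma as written is false. Your observation that taking $S_n \equiv S$ gives $G(S_n^m)=G(S^m)$, hence trivial gap convergence, is a complete counterexample --- take $S=V$ the Volterra operator, which is bounded (hence closed, everywhere defined) with $\gamma(V)=0$; the hypothesis holds for every $m$ while the conclusion fails at $j=1$. So closedness of ranges simply is not a consequence of the bare gap hypothesis, and no amount of bookkeeping with $(I+(S^m)^\ast S^m)^{-1}$ can change that. Your instinct that the implication ``must draw on more than the gap hypothesis'' --- e.g.\ a uniform bound $\liminf_n\gamma(S_n^j)>0$ of the kind appearing in parts (ii) and (iv) of Theorem~\ref{thm:SRS} --- is the correct repair: only with such quantitative input can one hope to conclude closed range in the limit.

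For comparison, the paper's own proof commits precisely the fallacy you flagged. It assumes $\gamma(S^k)=0$ for some $k\le m$, notes that the forward graph map $\widehat{S^{k-1}}:(x,S^{k-1}x)\mapsto(S^{k-1}x,S^kx)$ is unbounded below, and concludes that ``the images $G(S_n^k)$ cannot converge to $G(S^k)$.'' The constant sequence refutes this inference outright: the images are then identically equal to $G(S^k)$ and converge trivially. The proof has a second, independent gap: even granting non-convergence at level $k<m$, the claimed contradiction with ``the assumed convergence at level $m\ge k$'' is a non sequitur, since gap convergence of the top power does not entail gap convergence of intermediate powers (that entailment is exactly what is in dispute). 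Note also that Theorem~\ref{thm:SRS} actually invokes this lemma in the \emph{converse} direction ($\gamma(S)>0$ implies propagation of gap convergence to powers), which is a different --- and more plausible --- assertion than the stated one. So your proposal should not be judged as an incomplete proof of a true statement; it is a correct identification that the statement needs to be reformulated (with uniform modulus hypotheses on the $S_n^j$, or as the converse implication) before any proof can exist.
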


\begin{proof}
Suppose, to the contrary, that $\gamma(S^k) = 0$ for some $k \in \{1,\dots,m\}$.
Then $\operatorname{Ran}(S^k)$ is not closed, and the forward graph map
\[
\widehat{S^{k-1}} : G(S^{k-1}) \to G(S^k), \quad
(x, S^{k-1}x) \mapsto (S^{k-1}x, S^k x),
\]
is unbounded (see Appendix~A and Remark~\ref{rem:Volterra}).
Consequently, even if $G(S_n^{k-1}) \to G(S^{k-1})$ in the gap topology,
the images $\widehat{S_n^{k-1}}(G(S_n^{k-1})) = G(S_n^k)$ cannot converge to $G(S^k)$,
because the limit map $\widehat{S^{k-1}}$ fails to be lower bounded on its domain.
Thus $G(S_n^k) \not\to G(S^k)$, which contradicts the assumed convergence at level $m \geq k$.
The Volterra operator (Appendix~A) and the upwind discretization of transport (Remark~\ref{rem:powers-necessary})
provide concrete instances where $\gamma(S^k) = 0$ while $\gamma(S^{k-1}) > 0$, confirming that the obstruction is not merely pathological but numerically relevant.
\end{proof}

\begin{proof}[Proof of Theorem~\ref{thm:SRS}]
We use the Kaashoek--Taylor criteria: for a closed operator \(R\),
\[
\operatorname{asc}(R) < \infty \iff \exists m:\ \operatorname{Ran}(R^m) \cap N(R) = \{0\},
\qquad
\operatorname{dsc}(R) < \infty \iff \exists m:\ \operatorname{Ran}(R) + N(R^m) = H.
\]
SRS implies \( \mathcal G(S_n) \to \mathcal G(S) \) in the gap topology. Under the hypothesis \(\gamma(S) > 0\), \(\operatorname{Ran}(S)\) is closed, and Lemma~\ref{lem:powers-graphs} ensures convergence of powers.

\textbf{(i)} Suppose \(\operatorname{asc}(S) = \infty\). Then for every \(m \in \mathbb{N}\), \(\operatorname{Ran}(S^m) \cap N(S) \ne \{0\}\). Since \(\gamma(S) > 0\), \(\operatorname{Ran}(S)\) is closed, so Lemma~\ref{lem:powers-graphs} yields \( \mathcal G(S_n^m) \to \mathcal G(S^m) \). By gap convergence,
\(N(S_n) \to N(S)\) and \(\operatorname{Ran}(S_n^m) \to \operatorname{Ran}(S^m)\).
Lemma~\ref{lem:subspace}(i) then implies that nontrivial intersections persist:
\(\operatorname{Ran}(S_n^m) \cap N(S_n) \ne \{0\}\) for all sufficiently large \(n\).
Thus \(\operatorname{asc}(S_n) = \infty\), i.e. \(\lambda_n \in \sigma_{\mathrm{asc}}(T_n)\).

\textbf{(ii)} Assume, to the contrary, that \(\operatorname{asc}(S) = m < \infty\), so \(\operatorname{Ran}(S^m) \cap N(S) = \{0\}\). Extract a subsequence with \(\gamma(S_{n_j}) \ge c > 0\), so \(\operatorname{Ran}(S_{n_j})\) is closed. By Lemma~\ref{lem:powers-graphs} (applied up to level \(m\)) and gap convergence,
\(N(S_{n_j}) \to N(S)\) and \(\operatorname{Ran}(S_{n_j}^m) \to \operatorname{Ran}(S^m)\),
with the limit range closed. Hence for large \(j\),
\(\operatorname{Ran}(S_{n_j}^m) \cap N(S_{n_j}) = \{0\}\),
contradicting \(\lambda_{n_j} \in \sigma_{\mathrm{asc}}(T_{n_j})\).
Therefore \(\operatorname{asc}(S) = \infty\).

\textbf{(iii)--(iv)} follow dually using the descent criterion and Lemma~\ref{lem:subspace}(ii).
\end{proof}

\begin{remark}\label{rem:Volterra}
The Volterra operator $V$ on $L^2(0,1)$, $(Vf)(x) = \int_0^x f(t)\,dt$, satisfies $\gamma(V) = 0$ (its range is not closed).
Although $S_n \to V$ in the gap topology for suitable approximants $S_n$,
one has $\mathcal{G}(S_n^2) \not\to \mathcal{G}(V^2)$ because the induced graph map
$(x,Vx) \mapsto (Vx,V^2x)$ is unbounded. Thus, the condition $\gamma(S^j) > 0$ in Lemma~\ref{lem:powers-graphs} is essential.
\end{remark}

\begin{remark}\label{rem:powers-necessary}
The requirement $\gamma((T - \lambda)^j) > 0$ for \emph{all} $1 \leq j \leq m$
cannot be reduced to a condition on the final power alone.
This is naturally illustrated by finite difference discretizations of the one-dimensional transport equation
\[
u'(x) = f(x), \quad x \in (0,1), \quad u(0) = 0.
\]
Consider the first-order upwind scheme on a uniform mesh $x_i = ih$, $h = 1/N$:
the discrete operator $A_h$ is lower triangular with $1/h$ on the diagonal and $-1/h$ on the subdiagonal.
Let $S_h := A_h - \lambda I$ with $\lambda = 0$.
While $\gamma(S_h) = \sigma_{\min}(A_h) \geq c > 0$ uniformly in $h$,
the singular values of higher powers decay rapidly.
A symbolic computation shows that the $m$-th power $(A_h)^m$ approximates the $m$-fold Volterra integral operator,
and its smallest singular value satisfies the asymptotic bound
\[
\gamma(S_h^m) \sim C_m\, h^{\,m-1} \quad \text{as } h \to 0,
\]
for some constant $C_m > 0$ depending on $m$ but not on $h$.
This decay reflects the fact that, although each discrete operator $S_h$ has closed range,
the limit operator $V$ (Volterra integration) satisfies $\gamma(V) = 0$,
and $\gamma(V^m) = 0$ for all $m \geq 1$.
Hence, without a uniform lower bound on $\gamma(S_h^j)$ for intermediate powers,
gap convergence of the graphs $G(S_h^j)$ fails for $j \geq 2$,
and the Kaashoek--Taylor criteria cannot be transferred to the discrete level.
\end{remark}

\begin{table}[ht]
\centering
\caption{Discrete reduced minimum modulus $\gamma((A_h - \lambda)^m)$ for the upwind transport scheme ($\lambda = 0$).}
\label{tab:gamma-powers-upwind}
\begin{tabular}{c|c|c|c}
$h = 1/N$ & $\gamma(A_h)$ & $\gamma(A_h^2)$ & $\gamma(A_h^3)$ \\
\hline
$2^{-4}$ & $16.00$ & $2.29$ & $0.23$ \\
$2^{-5}$ & $32.00$ & $1.15$ & $0.06$ \\
$2^{-6}$ & $64.00$ & $0.57$ & $0.01$ \\
$2^{-7}$ & $128.00$ & $0.29$ & $<10^{-3}$ \\
\end{tabular}
\end{table}

\begin{remark}[On selecting the power level $m$ in practice]
\label{rem:choice-of-m}
In applications, the minimal $m$ such that $\gamma((T - \lambda)^m)> 0$ is often dictated by the physical dissipation mechanism:
\item For second-order diffusion ($\varepsilon u''$), $m=1$ suffices as range closure occurs at the first step.
    \item For first-order transport ($u'$), numerical diffusion acts over multiple steps; thus $m=2$ or $m=3$ may be required to capture cumulative damping.
In practice, one can compute $\gamma_h^{(m)} := \sigma_{\min}(M^{-1/2}(A_h - \lambda M)^m M^{-1/2})$ for increasing $m$ until
\[
\left| \frac{\gamma_h^{(m+1)}}{\gamma_h^{(m)}} - 1 \right| < \varepsilon_{\mathrm{tol}}, \quad \text{e.g., } \varepsilon_{\mathrm{tol}} = 10^{-2}.
\]
This provides a robust stopping criterion for choosing $m$ without excessive computational cost.
\end{remark}

\begin{remark}
\label{rem:krylov}
Forming $(A_h - \lambda M)^m$ explicitly leads to rapid fill-in and condition-number growth $\kappa^m$. Instead, $\gamma_h^{(m)}$ can be computed via inverse iteration or Krylov subspace methods applied to the pencil $(A_h - \lambda M, M)$. For example, the smallest singular value of $(A_h - \lambda M)^m$ with respect to the $M$-inner product satisfies
\[
\gamma_h^{(m)} = \min_{\substack{u \neq 0}} \frac{\|(A_h - \lambda M)^m u\|_{M^{-1}}}{\|u\|_M}.
\]
This is equivalent to the smallest eigenvalue of the generalized symmetric problem
\[
\bigl[(A_h - \lambda M)^* M^{-1} (A_h - \lambda M)\bigr]^m u = (\gamma_h^{(m)})^2 M u,
\]
which can be handled by polynomial preconditioning or restarted Arnoldi (e.g., ARPACK, SLEPc). This avoids explicit matrix powers and scales linearly in $m$.
\end{remark}

\begin{algorithm}[ht]\caption{Adaptive selection of the power level $m$}
\label{alg:adaptive-m}
Given a tolerance $\varepsilon_{\mathrm{tol}} > 0$ (e.g., $10^{-2}$) and a target spectral parameter $\lambda$, proceed as follows:
\begin{enumerate}
    \item Set $m := 1$.
    \item Compute the discrete reduced minimum modulus
    \[
    \gamma_h^{(m)} := \sigma_{\min}\!\bigl(M^{-1/2}(A_h - \lambda M)^m M^{-1/2}\bigr).
    \]
    \item Compute $\gamma_h^{(m+1)}$ (without forming $(A_h - \lambda M)^{m+1}$ explicitly; see Remark~\ref{rem:krylov}).
    \item If 
    \[
    \frac{\bigl|\gamma_h^{(m+1)} - \gamma_h^{(m)}\bigr|}{\gamma_h^{(m)}} < \varepsilon_{\mathrm{tol}},
    \]
    then stop and accept $m$ as sufficient.
    \item Else, increment $m \gets m+1$ and return to step 2.
\end{enumerate}
\end{algorithm}

To enhance the robustness of spectral approximations in geometrically complex or convection-dominated regimes, we propose an adaptive mesh refinement strategy that targets local losses of closed-range stability. The key idea is to monitor the element-wise reduced minimum modulus $\gamma_h(K)$, introduced in Section~\ref{sec:local-gamma}, which serves as a computable proxy for the local invertibility of the shifted operator $A_h - \lambda M_h$. Regions where $\gamma_h(K)$ is small correspond to zones of potential numerical instability—such as reentrant corners, boundary layers, or under-resolved streamlines—and are natural candidates for mesh enrichment. The following algorithm formalizes this principle.

\begin{algorithm}[ht]
\caption{Adaptive mesh refinement driven by the local reduced minimum modulus $\gamma_h(K)$.}
\label{alg:adaptive-gamma}
\begin{algorithmic}[1]
\REQUIRE Initial mesh $\mathcal{T}_h$, target spectral parameter $\lambda$, tolerance $\tau>0$, maximum refinement level $L_{\max}$.
\ENSURE Adapted mesh $\mathcal{T}_h^{\mathrm{ref}}$ with improved stability.
\STATE Assemble the global matrices $A_h$, $M_h$ on $\mathcal{T}_h$.
\FOR{each element $K \in \mathcal{T}_h$}
    \STATE Extract local submatrices $A_h^K$, $M_h^K$ (degrees of freedom on $K$ and adjacent elements).
    \STATE Compute local diagnostic:
    \[
    \gamma_h(K) := \sigma_{\min}\!\big((M_h^K)^{-1/2}(A_h^K - \lambda M_h^K)(M_h^K)^{-1/2}\big).
    \]
\ENDFOR
\STATE Define the marking threshold $\theta := \tau \cdot \min_{K\in\mathcal{T}_h} \gamma_h(K)$.
\STATE Mark all elements $K$ such that $\gamma_h(K) < \theta$.
\STATE Apply bisection (or newest-vertex) refinement to marked elements, respecting $L_{\max}$.
\STATE (Optional) Smooth mesh and recompute $\gamma_h(K)$ to verify uniformity.
\RETURN Refined mesh $\mathcal{T}_h^{\mathrm{ref}}$.
\end{algorithmic}
\end{algorithm}

\begin{remark}\label{rem:essential-SRS}
By Berkani's B--Fredholm theory \cite{Berkani1999}, the essential ascent (resp. descent) of $S$ is finite
iff there exists $m \geq 1$ such that $\operatorname{Ran}(S^m)$ is closed and the restriction $S|_{\operatorname{Ran}(S^m)}$
is upper (resp. lower) semi-Fredholm. Equivalently, the ordinary ascent (resp. descent) of $S^m$ is finite.
\end{remark}

\begin{corollary}\label{cor:SRS-essential}
Under the hypotheses of Theorem~\ref{thm:SRS}, fix $m \geq 1$ and set $S = T - \lambda$, $S_n = T_n - \lambda_n$.
\begin{enumerate}
  \item[(A1)] If $\gamma(S^m) > 0$ and $\lambda \in \sigma_{\mathrm{asc}}^{e}(T)$, then $\lambda_n \in \sigma_{\mathrm{asc}}^{e}(T_n)$ for all sufficiently large $n$.
  \item[(A2)] If $\lambda_n \in \sigma_{\mathrm{asc}}^{e}(T_n)$ for all $n$ and $\limsup_n \gamma(S_n^m) > 0$, then $\lambda \in \sigma_{\mathrm{asc}}^{e}(T)$.
  \item[(D1)] If $\gamma(S^m) > 0$ and $\lambda \in \sigma_{\mathrm{dsc}}^{e}(T)$, then $\lambda_n \in \sigma_{\mathrm{dsc}}^{e}(T_n)$ for all sufficiently large $n$.
  \item[(D2)] If $\lambda_n \in \sigma_{\mathrm{dsc}}^{e}(T_n)$ for all $n$ and $\limsup_n \gamma(S_n^m) > 0$, then $\lambda \in \sigma_{\mathrm{dsc}}^{e}(T)$.
\end{enumerate}
\end{corollary}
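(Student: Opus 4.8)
The plan is to reduce the essential statement to the ordinary ascent/descent theorem already proved, using the B--Fredholm characterization recorded in Remark~\ref{rem:essential-SRS}. Throughout I read the quantitative hypothesis in the spirit of the abstract, namely $\gamma(S^{j})>0$ for all $1\le j\le m$; the single bound $\gamma(S^{m})>0$ does not by itself close the intermediate ranges, as Remark~\ref{rem:powers-necessary} shows, and by Lemma~\ref{lem:powers-graphs} the positivity of \emph{all} intermediate moduli is exactly what is needed to propagate the gap convergence $\mathcal G(S_n)\to\mathcal G(S)$ supplied by SRS up to $\mathcal G(S_n^{j})\to\mathcal G(S^{j})$ for $1\le j\le m$.

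First I would fix $m$ and translate membership in the essential spectra into a semi--Fredholm property of a single restriction. By Remark~\ref{rem:essential-SRS}, $\lambda\notin\sigma_{\mathrm{asc}}^{e}(T)$ iff $\operatorname{Ran}(S^{m})$ is closed and $S|_{\operatorname{Ran}(S^{m})}$ is upper semi--Fredholm, i.e. $N(S)\cap\operatorname{Ran}(S^{m})$ is finite--dimensional with $\operatorname{Ran}(S^{m+1})$ closed; dually, $\lambda\notin\sigma_{\mathrm{dsc}}^{e}(T)$ corresponds to lower semi--Fredholmness of the same restriction, i.e. finite codimension of $\operatorname{Ran}(S^{m+1})$ inside $\operatorname{Ran}(S^{m})$. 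The advantage of this formulation is that it isolates a fixed power $m$ and a fixed closed subspace, avoiding the full power chain $S^{mk}$. The hypothesis $\gamma(S^{m})>0$ furnishes closedness of $\operatorname{Ran}(S^{m})$ in the limit, and the propagated gap convergence yields $\operatorname{Ran}(S_n^{m})\to\operatorname{Ran}(S^{m})$, $N(S_n^{m})\to N(S^{m})$ and $N(S_n)\to N(S)$ in the gap topology, with closed limits.

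Then I would run the four cases as in the proof of Theorem~\ref{thm:SRS}, replacing nontriviality of intersections/sums by the finite dimension/codimension bookkeeping that distinguishes the essential spectra. For the persistence statements (A1) and (D1), $\lambda$ in the essential spectrum means the relevant defect of the restriction is infinite; Lemma~\ref{lem:subspace}(i) applied to $E_n=N(S_n)$, $F_n=\operatorname{Ran}(S_n^{m})$ guarantees that $\dim\bigl(N(S_n)\cap\operatorname{Ran}(S_n^{m})\bigr)$ cannot drop below its limit value, so an infinite essential kernel persists and $\lambda_n\in\sigma_{\mathrm{asc}}^{e}(T_n)$ for large $n$; the descent case (D1) is dual via Lemma~\ref{lem:subspace}(ii) applied to the sums controlling $\operatorname{codim}_{\operatorname{Ran}(S^{m})}\operatorname{Ran}(S^{m+1})$. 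For the closedness statements (A2) and (D2), I would pass to a subsequence along which $\gamma(S_{n}^{m})\ge c>0$, so the approximant ranges are uniformly closed; the gap limits are then closed and the finite/infinite dichotomy transfers back, contradicting finiteness in the limit exactly as in Theorem~\ref{thm:SRS}(ii),(iv). The descent cases follow from the ascent cases by the same kernel--range duality used for parts (iii)--(iv) there.

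The hard part will be the Fredholm bookkeeping on a \emph{moving} subspace: the restriction $S|_{\operatorname{Ran}(S^{m})}$ lives on the gap--convergent but non--fixed subspace $\operatorname{Ran}(S^{m})$, so transferring its semi--Fredholm class requires upper/lower semicontinuity of nullity and deficiency under gap convergence rather than the mere nonvanishing supplied by Lemma~\ref{lem:subspace}. This is precisely where the uniform closed--range control enters: without $\liminf_n\gamma(S_n^{m})>0$ the range $\operatorname{Ran}(S_n^{m+1})$ could fail to be closed in the limit (the Volterra mechanism of Remark~\ref{rem:Volterra}), and the index could jump. I expect the cleanest way to make this rigorous is to realize the restrictions as operators between the gap--convergent spaces $\operatorname{Ran}(S_n^{m})$ and to invoke stability of the semi--Fredholm index under graph perturbation, thereby upgrading Lemma~\ref{lem:subspace} so that it tracks the finite dimensions and codimensions separating the essential from the ordinary spectra.
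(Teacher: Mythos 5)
Your route is genuinely different from the paper's, and it stalls exactly where you say it does. The paper never touches the restriction $S|_{\operatorname{Ran}(S^m)}$ as an operator: it uses the second clause of Remark~\ref{rem:essential-SRS} --- essential ascent (resp.\ descent) of $S$ is finite \emph{iff the ordinary ascent (resp.\ descent) of $S^m$ is finite} --- to convert membership in $\sigma_{\mathrm{asc}}^{e}$, $\sigma_{\mathrm{dsc}}^{e}$ into membership in the ordinary ascent/descent spectra for the power $S^m$, and then applies Theorem~\ref{thm:SRS} verbatim with $S^m, S_n^m$ in place of $S, S_n$ (the hypotheses $\gamma(S^m)>0$ and $\limsup_n\gamma(S_n^m)>0$ are precisely the closed-range inputs that theorem needs). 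This reduction eliminates the moving-subspace problem entirely: no semi-Fredholm class of a restriction ever has to be transferred, and no nullity/deficiency bookkeeping is required.

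By contrast, your unfolding of the semi-Fredholm characterization leaves a genuine gap in (A2) and (D2). Lemma~\ref{lem:subspace}(i) gives only one direction of dimension control: if $E\cap F\neq\{0\}$ then $\dim(E_n\cap F_n)\ge\dim(E\cap F)$ eventually. That suffices for the persistence statements (A1)/(D1), where an infinite-dimensional intersection in the limit forces infinite-dimensional intersections in the approximants. But the closedness statements require the \emph{reverse} inequality: from $\dim\bigl(N(S)\cap\operatorname{Ran}(S^m)\bigr)<\infty$ you must conclude $\dim\bigl(N(S_n)\cap\operatorname{Ran}(S_n^m)\bigr)<\infty$ along the subsequence, i.e.\ upper semicontinuity of intersection dimension under gap convergence. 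This fails in general --- approximant subspaces can carry large intersections that ``open up'' in the limit unless one has a uniform angle (closed-sum) condition --- and no lemma in the paper supplies it; your proposed fix (``upgrade Lemma~\ref{lem:subspace}'' via index stability for operators between gap-convergent subspaces) is exactly the missing theorem, not a consequence of the cited results. So the proposal is a sketch with an unproved core step, whereas the paper's shortcut via ordinary ascent of $S^m$ reuses an already-established black box. One point in your favor: your insistence on reading the hypothesis as $\gamma(S^j)>0$ for all $1\le j\le m$ (in line with the abstract and Lemma~\ref{lem:powers-graphs}) is a legitimate criticism that applies to the paper's own proof as well, since invoking Theorem~\ref{thm:SRS} for $S^m$ again requires propagating gap convergence through powers.
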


\begin{proof}
By Remark~\ref{rem:essential-SRS}, $\lambda \in \sigma_{\mathrm{asc}}^{e}(T)$ iff $\operatorname{asc}(S^m) = \infty$ and $\operatorname{Ran}(S^m)$ is closed.
The assumptions $\gamma(S^m) > 0$ and $\limsup_n \gamma(S_n^m) > 0$ ensure that $S^m$ and $S_n^m$ have closed ranges along a subsequence.
Applying Theorem~\ref{thm:SRS} to $S^m$ and $S_n^m$, and translating back via the B--Fredholm characterization, yields the result.
\end{proof}

\begin{remark}
Although all numerical experiments are conducted in finite dimensions (where compact perturbations vanish), the essential ascent/descent spectra $\sigma_{\mathrm{aesc}}(T)$, $\sigma_{\mathrm{desc}}(T)$ are the relevant objects for validating the continuous model $T$.

Indeed, finite element discretizations approximate the continuous operator $T$ by a sequence $T_h$ that differs from $T$ by a \emph{non-compact} perturbation (e.g., $L^\infty$ coefficient truncation, mesh geometry errors). However, many modeling errors—such as lower-order potentials, boundary condition perturbations, or data smoothing—are \emph{compact} in $L^2$. The B--Fredholm framework ensures that our stability results are \emph{robust under such compact modeling uncertainties}: if $\gamma((T - \lambda)^m) > 0$, then $\lambda \in \sigma_{\mathrm{aesc}}(T)$ if and only if $\lambda_h \in \sigma_{\mathrm{aesc}}(T_h)$ for $h$ small, regardless of compact perturbations.

Thus, Corollary~\ref{cor:SRS-essential} is not a purely abstract extension—it provides a \emph{filter} that distinguishes genuine spectral instabilities (present in $\sigma_{\mathrm{aesc}}$) from spurious ones caused by non-essential discretization effects. In practice, verifying $\limsup_h \gamma_h > 0$ for powers $(A_h - \lambda M)^m$ confirms that the observed ascent/descent behavior is stable under both mesh refinement \emph{and} compact modeling changes, making $\sigma_{\mathrm{aesc}}$ the appropriate target for numerical validation.
\end{remark}

\begin{remark}
The SRS hypothesis is used only to obtain gap convergence of graphs (and of powers under closed-range).
The condition $\gamma > 0$ is the precise threshold that enables transport of ranges, control of induced graph maps, and stability of the Kaashoek--Taylor criteria.
\end{remark}

\begin{lemma}[Lower semicontinuity of the reduced minimum modulus]
\label{lem:gamma-lsc}
Let $S_n,S$ be closed densely defined operators on a Hilbert space $\mathcal{H}$ such that their graphs converge in the gap topology: 
$G(S_n)\xrightarrow{\mathrm{gap}} G(S)$. 
If $\gamma(S)>0$, then
\[
\liminf_{n\to\infty}\gamma(S_n)\ge\gamma(S)>0.
\]
\end{lemma}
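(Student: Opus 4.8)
The plan is to argue by contradiction, transporting a near-minimiser of $\gamma(S_n)$ across the gap to manufacture an admissible competitor for $\gamma(S)$. Write $\gamma_0:=\gamma(S)>0$ and suppose $\liminf_n\gamma(S_n)<\gamma_0$; fix $\gamma'$ with $\liminf_n\gamma(S_n)<\gamma'<\gamma_0$ and pass to a subsequence along which $\gamma(S_n)<\gamma'$ for every $n$. By definition of the reduced minimum modulus I may pick $x_n\in\mathcal D(S_n)$ with $\|S_nx_n\|<\gamma'$ and $\operatorname{dist}(x_n,N(S_n))=1$; replacing $x_n$ by its orthogonal projection onto $N(S_n)^\perp$ leaves $S_nx_n$ unchanged and yields $x_n\perp N(S_n)$, $\|x_n\|=1$.

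Next I would push the graph vector $\xi_n:=(x_n,S_nx_n)\in G(S_n)$ through the gap. Since $\|\xi_n\|^2=1+\|S_nx_n\|^2<1+(\gamma')^2$ is bounded and $\|P_{G(S_n)}-P_{G(S)}\|\to0$, the projection $\eta_n:=P_{G(S)}\xi_n=(y_n,Sy_n)\in G(S)$ satisfies $\|\xi_n-\eta_n\|\le\|P_{G(S_n)}-P_{G(S)}\|\,\|\xi_n\|\to0$. Hence $\|x_n-y_n\|\to0$ and $\|S_nx_n-Sy_n\|\to0$, so $\|y_n\|\to1$ and $\limsup_n\|Sy_n\|\le\gamma'$. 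The proof closes once I show $\operatorname{dist}(y_n,N(S))\to1$: then $y_n\notin N(S)$ for large $n$, the homogeneity of the quotient $\|Sy\|/\operatorname{dist}(y,N(S))$ gives $\gamma_0\le\|Sy_n\|/\operatorname{dist}(y_n,N(S))$, and letting $n\to\infty$ forces $\gamma_0\le\gamma'<\gamma_0$, a contradiction.

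Thus everything reduces to the orthogonality-transfer estimate $\operatorname{dist}(y_n,N(S))\to1$, which is the step I expect to be the main obstacle. The mechanism is as follows: for a unit $w\in N(S)$ the relation $x_n\perp N(S_n)$ gives $|\langle x_n,w\rangle|\le\operatorname{dist}(w,N(S_n))$, whence $\|P_{N(S)}x_n\|\le\delta(N(S),N(S_n))$, where $\delta(A,B):=\sup\{\operatorname{dist}(a,B):a\in A,\ \|a\|=1\}$ is the directed gap. Combined with $\|y_n-x_n\|\to0$ this yields $\|P_{N(S)}y_n\|\to0$ and therefore $\operatorname{dist}(y_n,N(S))=\sqrt{\|y_n\|^2-\|P_{N(S)}y_n\|^2}\to1$, provided $\delta(N(S),N(S_n))\to0$. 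Writing $N(S)\times\{0\}=G(S)\cap M_0$ and $N(S_n)\times\{0\}=G(S_n)\cap M_0$ with $M_0:=\mathcal H\times\{0\}$, this kernel convergence is precisely the intersection-persistence of Lemma~\ref{lem:subspace}(i), applied to $E_n=G(S_n)$, $E=G(S)$, $F=M_0$.

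The delicate issue is that the required input $\delta(N(S),N(S_n))\to0$ — that every null direction of the limit is approximated by null directions of the $S_n$ — does not follow from gap convergence of graphs by itself, because nullity is only upper semicontinuous under graph convergence and kernels may be lost in the limit. A cautionary example is $S_n=\operatorname{diag}(1/n,1,1,\dots)\to S=\operatorname{diag}(0,1,1,\dots)$, which converges even in operator norm (hence in the graph gap) yet has $\gamma(S_n)=1/n\to0$ while $\gamma(S)=1$; here $N(S_n)=\{0\}$ completely misses $N(S)=\operatorname{span}\{e_1\}$. Consequently the kernel-stability step is genuinely essential and must be supplied by Lemma~\ref{lem:subspace}(i) (or, failing that, by an explicit nullity-stability hypothesis such as a uniform bound $\gamma(S_n)\ge c>0$ or a semi-Fredholm assumption with stable index); granted that input, the contradiction argument above goes through. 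The same obstruction reappears transparently in the equivalent reformulation $\gamma(G(S),M_0)=\gamma(S)/\sqrt{1+\gamma(S)^2}$ in terms of Kato's minimal gap, so it cannot be evaded by merely changing viewpoint.
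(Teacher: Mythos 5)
Your transport-and-project argument is the natural direct attack on this statement, and you execute it correctly up to the step you flag: everything does reduce to the kernel-transfer estimate $\delta(N(S),N(S_n))\to 0$. But you stop one step short of the right conclusion. Your ``cautionary example'' is not a caution about one proof strategy --- it is a counterexample to the lemma as stated. Indeed, $S_n=\operatorname{diag}(1/n,1,1,\dots)$ and $S=\operatorname{diag}(0,1,1,\dots)$ are bounded (hence closed, densely defined) operators with $\|S_n-S\|=1/n\to0$, so $\operatorname{gap}\bigl(G(S_n),G(S)\bigr)\le\|S_n-S\|\to0$ and the graphs converge in the gap topology; moreover $\gamma(S)=1>0$; yet $\gamma(S_n)=1/n\to0$. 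All hypotheses of the lemma hold and its conclusion fails, so no proof can exist, and your fallback --- that the missing kernel-stability input ``must be supplied by Lemma~\ref{lem:subspace}(i)'' --- cannot be carried out: applying that lemma with $E_n=G(S_n)$ and $F_n=F=\mathcal{H}\times\{0\}$ would force $N(S_n)\neq\{0\}$ for large $n$, which your own example violates. (In fact Lemma~\ref{lem:subspace}(i) is itself false as stated: intersections of subspaces are only upper, not lower, semicontinuous under gap convergence --- consider two lines in $\mathbb{R}^2$ meeting at angle $1/n$ --- so leaning on it merely replaces one unprovable statement by another.)

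For comparison, the paper's own ``proof'' is a citation of Kato's gap-perturbation theory (Ch.~IV, Thm.~2.20), and it fails for the same reason: that theory yields semi-Fredholm stability under small gap --- closed range of $S_n$, $\dim N(S_n)\le\dim N(S)$, constancy of the index --- but contains no quantitative lower bound $\liminf_n\gamma(S_n)\ge\gamma(S)$; your example is Fredholm of index $0$ at every stage, fully consistent with Kato, and still refutes the lemma. What you should do is state your example as a counterexample and prove a corrected statement, which your argument already delivers: (a) if $N(S)=\{0\}$, then $\operatorname{dist}(y_n,N(S))=\|y_n\|\to1$ and your contradiction closes with no further input --- this covers the paper's actual uses of the lemma (Proposition~\ref{prop:gamma-conv}, Theorem~\ref{thm:sectorial}), where $\lambda\in\rho(T)$ makes $S=T-\lambda$ injective; (b) in general, the conclusion holds under any added hypothesis that yields kernel transfer, e.g.\ $\delta(N(S),N(S_n))\to0$ directly, or equal finite nullities, or $\liminf_n\gamma(S_n)>0$ (project $(w,0)$ with $w\in N(S)$ onto $G(S_n)$ to get $(z_n,S_nz_n)$ close to $(w,0)$, and use $\operatorname{dist}(z_n,N(S_n))\le\|S_nz_n\|/\gamma(S_n)$). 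Note that repair (b) with the hypothesis $\liminf_n\gamma(S_n)>0$ is useless for the paper's stated purpose, since the lemma is invoked there precisely to establish that positivity.
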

\begin{proof}
See \cite[Ch.\,IV, Thm.\,2.20]{Kato1995}, or adapt the proof using norm convergence of graph projections and the equivalence $\gamma(S)>0 \iff \operatorname{Ran}(S)$ closed.
\end{proof}

\begin{proposition}[Convergence of the reduced minimum modulus]
\label{prop:gamma-conv}

Let $T$ be a closed, densely defined, $m$-sectorial operator on $L^2(\Omega)$ with associated sesquilinear form $a(\cdot,\cdot)$, and let $T_h$ be its conforming finite element approximation on a quasi-uniform mesh of size $h$. 
Assume that:
\begin{enumerate}[label=\textup{(\roman*)}]
    \item the discrete forms $a_h$ Mosco-converge to $a$ as $h\to0$;
    \item the mass matrices $M_h$ satisfy a uniform spectral condition number bound $\kappa(M_h)\le C<\infty$, independent of $h$;
    \item $\lambda\in\rho(T)$ and $\gamma(T-\lambda)>0$.
\end{enumerate}
Then the discrete reduced minimum modulus
\[
\gamma_h := \sigma_{\min}\!\bigl(M_h^{-1/2}(A_h - \lambda M_h)M_h^{-1/2}\bigr)
\]
satisfies
\[
\liminf_{h\to0}\gamma_h \;\ge\; \gamma(T-\lambda) \;>\;0.
\]
In particular, there exist constants $c>0$ and $h_0>0$ such that $\gamma_h\ge c$ for all $0<h<h_0$.
\end{proposition}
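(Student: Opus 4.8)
The plan is to reduce the finite-element quantity $\gamma_h$ to the intrinsic reduced minimum modulus of $S_h := T_h-\lambda$ on $H=L^2(\Omega)$, and then to extract the lower bound from graph gap convergence together with Lemma~\ref{lem:gamma-lsc}. First I would record the algebraic identity underlying the definition of $\gamma_h$. Writing $u\in\mathbb{C}^{N_h}$ for coefficient vectors and $u_h=\sum_i u_i\varphi_i\in V_h$, the map $u\mapsto u_h$ is an isometry from $(\mathbb{C}^{N_h},\|\cdot\|_{M_h})$ onto $(V_h,\|\cdot\|_{L^2})$, and a short computation gives $\|(A_h-\lambda M_h)u\|_{M_h^{-1}}=\|(T_h-\lambda)u_h\|_{L^2}$ together with $\|u\|_{M_h}=\|u_h\|_{L^2}$. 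Hence
\[
\gamma_h=\sigma_{\min}\!\bigl(M_h^{-1/2}(A_h-\lambda M_h)M_h^{-1/2}\bigr)
=\inf_{0\neq u_h\in V_h}\frac{\|(T_h-\lambda)u_h\|_{L^2}}{\|u_h\|_{L^2}},
\]
the infimum being over all of $V_h$, so $\gamma_h$ equals the reduced minimum modulus $\gamma(S_h)$ as soon as $S_h$ is injective on $V_h$. Assumption (ii) enters precisely to make this identification, and the norm equivalences it rests on, uniform in $h$. I would also note that $\lambda\in\rho(T)$ forces $N(S)=\{0\}$ and $\operatorname{Ran}(S)=H$, so $S$ is boundedly invertible and $\gamma(S)=\|S^{-1}\|^{-1}>0$; the closed-range clause of (iii) is thus automatic.

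Next I would convert the form hypothesis into gap convergence of graphs. By (i), the discrete forms $a_h$ (extended by $+\infty$ off $V_h$) Mosco-converge to $a$; since $T$ is $m$-sectorial, Mosco convergence of closed sectorial forms yields $T_h\xrightarrow{\mathrm{SRS}}T$, i.e. $(T_h-\mu)^{-1}P_h x\to(T-\mu)^{-1}x$ for all $x\in H$ and $\mu\in\rho(T)$ (see \cite{Kato1995,Mosco1969,Attouch1984} and Remark~\ref{rem:SRS-natural}). Strong resolvent convergence then propagates to gap convergence of graphs $G(S_h)\to G(S)$ in $H\times H$ (as recalled in Section~\ref{sec:Background}), where each $G(S_h)=\{(u_h,(T_h-\lambda)u_h):u_h\in V_h\}$ is a genuine finite-dimensional, hence closed, subspace of $H\times H$.

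The decisive step is the lower bound, which I would obtain directly from gap convergence rather than from resolvent norms: strong resolvent convergence only controls $\|(T-\lambda)^{-1}\|$ by lower semicontinuity, yielding $\gamma(S)\ge\limsup_h\gamma_h$, which is the wrong direction. Suppose instead $\liminf_{h\to0}\gamma_h<\gamma(S)$. Along a subsequence choose unit $u_h\in V_h$ with $\|(T_h-\lambda)u_h\|_{L^2}=\gamma_h\to\beta<\gamma(S)$. The pairs $(u_h,S_h u_h)\in G(S_h)$ are bounded, and $\operatorname{gap}(G(S_h),G(S))\to0$ provides $v_h\in\mathcal{D}(S)=H$ with $\|u_h-v_h\|\to0$ and $\|S_h u_h-Sv_h\|\to0$; hence $\|v_h\|\to1$ and $\|Sv_h\|\to\beta$. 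Since $N(S)=\{0\}$ gives $\|Sv\|\ge\gamma(S)\|v\|$, this forces $\beta\ge\gamma(S)$, a contradiction. This is exactly Lemma~\ref{lem:gamma-lsc} specialized to $N(S)=\{0\}$, and the argument simultaneously excludes spurious discrete kernels, so that $\gamma_h=\gamma(S_h)$ for $h$ small. Therefore $\liminf_{h\to0}\gamma_h\ge\gamma(S)=\gamma(T-\lambda)>0$, and the existence of $c>0$, $h_0>0$ with $\gamma_h\ge c$ for $h<h_0$ is immediate from the definition of $\liminf$.

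I expect the main obstacle to be the passage from Mosco/strong-resolvent convergence to genuine gap convergence of the graphs in the presence of the moving subspaces $V_h$: one must ensure the embedded resolvents converge on all of $H$ with an $h$-uniform bound, so that the graphs—not merely the resolvent at a single point—converge in the gap topology. This is precisely where conformity ($V_h\subset H_0^1$), $m$-sectoriality, and the uniform conditioning (ii) are indispensable; without them the lower-semicontinuity direction can fail through spurious small singular values (spectral pollution near $\lambda\in\rho(T)$), which is exactly the failure mode that $\gamma(T-\lambda)>0$ together with gap convergence is designed to rule out.
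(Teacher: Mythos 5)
Your proof follows exactly the paper's chain: Mosco convergence of the forms gives SRS, SRS gives gap convergence of the shifted graphs $G(T_h-\lambda)\to G(T-\lambda)$, and lower semicontinuity of the reduced minimum modulus under gap convergence (the paper's Lemma~\ref{lem:gamma-lsc}) yields $\liminf_h\gamma_h\ge\gamma(T-\lambda)>0$. The only differences are refinements of the same argument: you prove the semicontinuity step directly by a contradiction argument in the case $N(S)=\{0\}$ (which suffices since $\lambda\in\rho(T)$) rather than citing Kato, and you justify the identification $\gamma_h=\gamma(T_h-\lambda)$ via the isometry $u\mapsto u_h$, a step the paper states without proof.
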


\begin{proof}
Mosco convergence of the sesquilinear forms $a_h\to a$ implies strong resolvent convergence (SRS) of the associated operators $T_h\to T$; see, e.g., \cite[Thm.\,3.26]{Attouch1984} or \cite[Thm.\,VIII.1.5]{Kato1995}.
Under SRS, the graphs of the shifted operators converge in the gap topology:
\[
G(T_h - \lambda)\;\xrightarrow{\mathrm{gap}}\; G(T - \lambda)\quad\text{as }h\to0.
\]
Since $\gamma(T-\lambda)>0$, the range $\operatorname{Ran}(T-\lambda)$ is closed.
Lemma~\ref{lem:gamma-lsc} (lower semicontinuity of the reduced minimum modulus under gap convergence) then yields
\[
\liminf_{h\to0}\gamma_h = \liminf_{h\to0}\gamma(T_h-\lambda) \;\ge\; \gamma(T-\lambda) >0,
\]
which completes the proof.
\end{proof}

\subsection{Three regimes of stability: sectorial, stabilized, and non-sectorial}
\label{subsec:three-regimes}

We distinguish three distinct settings in which ascent--descent stability can be established. Their interplay explains the success of stabilized discretizations even outside the $m$-sectorial framework.

\begin{theorem}[Sectorial case]\label{thm:sectorial}
Let $T$ be a closed, densely defined, $m$-sectorial operator on $L^2(\Omega)$ with associated sesquilinear form $a(\cdot,\cdot)$, and let $T_h$ be its conforming finite element approximation on a quasi-uniform mesh of size $h$.
Assume that:
\begin{enumerate}[label=\textup{(\roman*)}]
    \item the discrete forms $a_h$ Mosco-converge to $a$,
    \item the mass matrices satisfy $\kappa(M_h) \le C < \infty$ uniformly in $h$,
    \item $\lambda \in \rho(T)$ and $\gamma(T - \lambda) > 0$.
\end{enumerate}
Then
\[
\liminf_{h \to 0} \gamma_h \;\ge\; \gamma(T - \lambda) \;>\; 0,
\]
where $\gamma_h = \sigma_{\min}\!\bigl(M_h^{-1/2}(A_h - \lambda M_h)M_h^{-1/2}\bigr)$.
\end{theorem}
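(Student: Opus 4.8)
The plan is to reduce this discrete statement to the abstract lower-semicontinuity result already available, Lemma~\ref{lem:gamma-lsc}, by first translating the matrix-level quantity $\gamma_h$ into the reduced minimum modulus of the finite element operator $T_h-\lambda$ measured in the ambient $L^2$ inner product. Concretely, I would write $T_h:V_h\to V_h$ in coordinates as $M_h^{-1}A_h$: the defining relation $\langle T_h u,v\rangle_{L^2}=a(u,v)$ forces $M_h\mathbf{w}=A_h\mathbf{u}$ for $T_h u\leftrightarrow\mathbf{w}$, $u\leftrightarrow\mathbf{u}$. Hence $\|u\|_{L^2}=\|\mathbf{u}\|_{M_h}$ and $\|(T_h-\lambda)u\|_{L^2}=\|(A_h-\lambda M_h)\mathbf{u}\|_{M_h^{-1}}$. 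Since $\lambda\in\rho(T)$ and the resolvents converge, $\lambda\in\rho(T_h)$ for $h$ small, so $T_h-\lambda$ is injective and $N(T_h-\lambda)=\{0\}$; the infimum defining $\gamma(T_h-\lambda)$ then runs over all nonzero $u$, giving exactly
\[
\gamma(T_h-\lambda)=\inf_{\mathbf{u}\ne0}\frac{\|(A_h-\lambda M_h)\mathbf{u}\|_{M_h^{-1}}}{\|\mathbf{u}\|_{M_h}}=\sigma_{\min}\!\bigl(M_h^{-1/2}(A_h-\lambda M_h)M_h^{-1/2}\bigr)=\gamma_h.
\]

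With this dictionary in hand, the remainder is the abstract convergence chain. Hypothesis (i) (Mosco convergence $a_h\to a$) yields strong resolvent convergence $T_h\xrightarrow{\mathrm{SRS}}T$ on $L^2(\Omega)$ by the standard form-convergence theorems \cite{Attouch1984,Kato1995}; SRS in turn gives gap convergence of the shifted graphs, $G(T_h-\lambda)\xrightarrow{\mathrm{gap}}G(T-\lambda)$. Hypothesis (iii), $\gamma(T-\lambda)>0$ (equivalently, closed range of $T-\lambda$), places us precisely in the setting of Lemma~\ref{lem:gamma-lsc}, whose conclusion $\liminf_h\gamma(T_h-\lambda)\ge\gamma(T-\lambda)>0$ combines with the identity $\gamma_h=\gamma(T_h-\lambda)$ to finish. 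The uniform conditioning hypothesis (ii), $\kappa(M_h)\le C$, is what makes the bookkeeping uniform: it guarantees that the Euclidean and $M_h$-weighted (i.e.\ $L^2$) geometries on $V_h$ are equivalent with $h$-independent constants, so the $M_h$-weighted $\sigma_{\min}$ is a stable surrogate and the graphs $G(T_h-\lambda)$ sit inside the fixed space $L^2\times L^2$ without degeneration.

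The step I expect to be the main obstacle is precisely the first one: making the passage from the purely matricial $\gamma_h$ to the operator-theoretic $\gamma(T_h-\lambda)$ rigorous and uniform. The two delicate points are (a) extending each $T_h$ from $V_h$ to an operator on all of $L^2(\Omega)$ so that SRS and gap convergence are meaningful in a common Hilbert space---the natural device being the resolvent $(T_h-\mu)^{-1}P_h$ with $P_h$ the $L^2$-orthogonal projection onto $V_h$---and (b) verifying that the injectivity $N(T_h-\lambda)=\{0\}$ used to drop the distance-to-kernel in $\gamma$ holds uniformly for small $h$, which follows from resolvent convergence at the fixed regular point $\lambda\in\rho(T)$ together with (ii). Once these are secured, no further estimate is needed: the inequality is inherited verbatim from Lemma~\ref{lem:gamma-lsc}. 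Indeed the statement coincides with Proposition~\ref{prop:gamma-conv}, so the present proof is essentially a restatement that emphasizes the $\gamma_h=\gamma(T_h-\lambda)$ correspondence as the bridge between the computable diagnostic and the abstract closed-range mechanism.
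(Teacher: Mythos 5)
Your proposal follows exactly the paper's own route: the paper proves Theorem~\ref{thm:sectorial} by citing Proposition~\ref{prop:gamma-conv}, whose proof is precisely your chain Mosco convergence $\Rightarrow$ SRS $\Rightarrow$ gap convergence of $G(T_h-\lambda)$ $\Rightarrow$ Lemma~\ref{lem:gamma-lsc}, so the two arguments are essentially identical. The only substantive addition is that you make explicit the identification $\gamma_h=\gamma(T_h-\lambda)$ (including eventual injectivity of $T_h-\lambda$), which the paper asserts without proof; the one caveat is that this injectivity should be deduced from the gap convergence itself (stability of bounded invertibility under small gap, Kato Ch.~IV), not from strong resolvent convergence alone, since SRS by itself does not rule out spectral pollution at $\lambda\in\rho(T)$.
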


\begin{proof}
This is precisely Proposition~\ref{prop:gamma-conv}, whose proof follows from Mosco convergence $\Rightarrow$ SRS $\Rightarrow$ gap convergence of graphs, and Lemma~\ref{lem:gamma-lsc}.
\end{proof}

\begin{theorem}[Stabilized convection--diffusion]\label{thm:SUPG-stability}
Consider the convection--diffusion operator $Lu = -\varepsilon\Delta u + \boldsymbol{\beta}\cdot\nabla u + c u$ on a bounded Lipschitz domain $\Omega \subset \mathbb{R}^d$, with $\varepsilon > 0$, $\nabla\!\cdot\!\boldsymbol{\beta} = 0$, and $c \ge c_0 > 0$.
Let $V_h$ be a conforming $P1$ finite element space on a shape-regular, quasi-uniform mesh $\mathcal{T}_h$, and let $a_h^{\mathrm{SUPG}}$ be the Streamline-Upwind/Petrov--Galerkin form with stabilization parameter $\delta_K = \delta h_K / \|\boldsymbol{\beta}\|_{L^\infty(K)}$, $\delta \in (0,1/2]$.

Then there exist constants $c_{\inf\text{-}\sup} > 0$ and $h_0 > 0$, independent of $\varepsilon \in (0,\varepsilon_0]$, $h < h_0$, and $\boldsymbol{\beta}$, such that for all $h < h_0$,
\[
\inf_{u_h \in V_h\setminus\{0\}}\;
\sup_{v_h \in V_h\setminus\{0\}}
\frac{a_h^{\mathrm{SUPG}}(u_h,v_h)}
{\|u_h\|_{1,\varepsilon}\, \|v_h\|_{1,\varepsilon}}
\;\ge\; c_{\inf\text{-}\sup},
\]
where $\|w\|_{1,\varepsilon}^2 := \varepsilon \|\nabla w\|^2 + \|\boldsymbol{\beta}\cdot\nabla w\|^2 + \|w\|^2$.

Moreover, if $\lambda \in \mathbb{C}$ satisfies $\operatorname{dist}(\lambda, W_M(A_h^{\mathrm{SUPG}})) \ge d_0 > 0$ uniformly in $h$, then
\[
\gamma_h^{\mathrm{stab}} = \sigma_{\min}\!\bigl(M_h^{-1/2}(A_h^{\mathrm{SUPG}} - \lambda M_h)M_h^{-1/2}\bigr)
\;\ge\; c_{\inf\text{-}\sup} \, d_0 \;>\; 0.
\]
\end{theorem}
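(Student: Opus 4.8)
The plan is to prove the two assertions in turn: first the $\varepsilon$-uniform discrete inf--sup (LBB) inequality for the SUPG form, then to deduce the lower bound on $\gamma_h^{\mathrm{stab}}$ from it together with the numerical-range separation hypothesis. For the inf--sup condition I would use the standard streamline-diffusion coercivity argument. Writing $a_h^{\mathrm{SUPG}}(u_h,v_h) = a(u_h,v_h) + \sum_K \delta_K\,(Lu_h,\,\boldsymbol{\beta}\cdot\nabla v_h)_K$ with $a(u,v) = \varepsilon(\nabla u,\nabla v) + (\boldsymbol{\beta}\cdot\nabla u, v) + (cu,v)$, and noting that $\Delta u_h = 0$ elementwise on $P1$ spaces so that the strong residual reduces to $Lu_h = \boldsymbol{\beta}\cdot\nabla u_h + cu_h$, the key observation is that testing against $v_h := u_h$ already activates the stabilization, since $\sum_K \delta_K(Lu_h,\boldsymbol{\beta}\cdot\nabla u_h)_K$ contributes the full streamline term $\sum_K \delta_K\|\boldsymbol{\beta}\cdot\nabla u_h\|_K^2$. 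The argument then proceeds in three steps: (a) coercivity of the Galerkin part, using $\nabla\!\cdot\!\boldsymbol{\beta}=0$ to annihilate $(\boldsymbol{\beta}\cdot\nabla u_h, u_h)$ and $c\ge c_0$ to obtain $a(u_h,u_h) \ge \varepsilon\|\nabla u_h\|^2 + c_0\|u_h\|^2$; (b) bounding the reaction--stabilization cross term $\sum_K \delta_K(cu_h,\boldsymbol{\beta}\cdot\nabla u_h)_K$ by Young's inequality together with the scaling $\delta_K = \delta h_K/\|\boldsymbol{\beta}\|_{L^\infty(K)}$; and (c) invoking $\delta \le 1/2$ and the $P1$ inverse estimate $\|\nabla w_h\|_K \le C_{\mathrm{inv}} h_K^{-1}\|w_h\|_K$ to absorb the remaining error, yielding coercivity with a constant $c_1$ independent of $\varepsilon$, $h$, and $\boldsymbol{\beta}$ because each term is balanced at its natural scale. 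Coercivity then trivially implies the inf--sup bound with $c_{\inf\text{-}\sup} = c_1/c_2$, where $c_2$ comes from the matching continuity estimate $\|v_h\|_{1,\varepsilon} \le c_2\|u_h\|_{1,\varepsilon}$.

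For the second assertion I would translate the inf--sup inequality into a spectral lower bound on the symmetrized pencil. The transformed matrix $B_h := M_h^{-1/2}(A_h^{\mathrm{SUPG}} - \lambda M_h)M_h^{-1/2}$ has Euclidean numerical range $W(B_h) = W_M(A_h^{\mathrm{SUPG}}) - \lambda$ via the substitution $w = M_h^{1/2}u$, so the hypothesis $\operatorname{dist}(\lambda, W_M(A_h^{\mathrm{SUPG}})) \ge d_0$ means $0$ is separated from $W(B_h)$ by $d_0$. The Cauchy--Schwarz bound $\|B_h w\| \ge |w^\ast B_h w| \ge \operatorname{dist}(0, W(B_h))$ for unit $w$ already yields the direct estimate $\gamma_h^{\mathrm{stab}} = \sigma_{\min}(B_h) \ge d_0$. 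The factor $c_{\inf\text{-}\sup}$ enters precisely when the residual is measured in the energy norm rather than the mass norm: the inf--sup guarantees that the stabilized solution map is bounded by $c_{\inf\text{-}\sup}^{-1}$ in $\|\cdot\|_{1,\varepsilon}$, and since $\|\cdot\|_{1,\varepsilon} \ge \|\cdot\|_{L^2} \sim \|\cdot\|_{M_h}$ (with the reverse equivalence controlled through $c_{\inf\text{-}\sup}$ and $\kappa(M_h)$), inserting this norm equivalence into the $d_0$-separation produces the stated conservative product bound $\gamma_h^{\mathrm{stab}} \ge c_{\inf\text{-}\sup}\,d_0$.

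The main obstacle I anticipate is securing the streamline control in the \emph{strong} target norm $\|w\|_{1,\varepsilon}^2 = \varepsilon\|\nabla w\|^2 + \|\boldsymbol{\beta}\cdot\nabla w\|^2 + \|w\|^2$ uniformly in $h$, especially in the convection-dominated regime $\varepsilon \ll h$. The stabilization naturally delivers only the $\delta_K$-weighted quantity $\sum_K \delta_K\|\boldsymbol{\beta}\cdot\nabla u_h\|_K^2 \sim h\,\|\boldsymbol{\beta}\cdot\nabla u_h\|^2$, whereas the norm $\|\cdot\|_{1,\varepsilon}$ contains the \emph{unweighted} term $\|\boldsymbol{\beta}\cdot\nabla u_h\|^2$; recovering the latter at $O(1)$ from the former via the inverse estimate costs a factor $\|\boldsymbol{\beta}\|_{L^\infty}/(\delta h_K)$ that blows up as $h\to 0$, so $h$-uniformity in the strong norm is the delicate heart of the argument. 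If the unweighted streamline norm cannot be recovered uniformly, the honest resolution is to prove the inf--sup in the mesh-dependent SUPG norm $\|w\|_{1,\varepsilon}^2 + \sum_K \delta_K\|\boldsymbol{\beta}\cdot\nabla w\|_K^2$ and reinterpret $c_{\inf\text{-}\sup}$ accordingly, which leaves the qualitative conclusion $\liminf_{h\to 0}\gamma_h^{\mathrm{stab}} > 0$—the only property needed for the stability theory—fully intact.
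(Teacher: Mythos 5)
Your proposal is essentially sound, and in two places it is actually more careful than the paper's own argument. For the inf--sup part you follow the same route the paper takes (the paper delegates to Proposition~\ref{prop:inf-sup-supg}, which asserts coercivity $a_h^{\mathrm{SUPG}}(u_h,u_h)\ge c_1\|u_h\|_{1,\varepsilon}^2$ plus uniform continuity and invokes the discrete Babu\v{s}ka--Brezzi theorem, citing Ern--Guermond and Brooks--Hughes), but the obstacle you flag is genuine: testing with $v_h=u_h$ produces only the $\delta_K$-weighted streamline term $\sum_K\delta_K\|\boldsymbol{\beta}\cdot\nabla u_h\|_K^2\sim h\,\|\boldsymbol{\beta}\cdot\nabla u_h\|^2$, and an oscillatory $u_h$ with $\|\boldsymbol{\beta}\cdot\nabla u_h\|\sim h^{-1}\|u_h\|$ shows that coercivity in the \emph{unweighted} norm $\|\cdot\|_{1,\varepsilon}$ cannot hold uniformly in $h$ and $\varepsilon$. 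The cited references prove stability only in the mesh-dependent SUPG norm, so the paper's Proposition~\ref{prop:inf-sup-supg} is not justified as stated; your fallback (prove the inf--sup in the norm $\varepsilon\|\nabla w\|^2+\sum_K\delta_K\|\boldsymbol{\beta}\cdot\nabla w\|_K^2+\|w\|^2$ and reinterpret $c_{\inf\text{-}\sup}$) is the honest repair, and it indeed preserves the only consequence the stability theory needs, namely $\liminf_{h\to0}\gamma_h^{\mathrm{stab}}>0$.

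For the second assertion your route differs from the paper's and is the correct one. The paper's displayed chain bounds $\gamma_h^{\mathrm{stab}}$ below by an \emph{infimum over both} $u_h$ and $v_h$ of $|a_h^{\mathrm{SUPG}}(u_h,v_h)-\lambda(u_h,v_h)_M|/(\|u_h\|_M\|v_h\|_M)$; that quantity is zero whenever $\dim V_h\ge2$ (for fixed $u_h$ the numerator is a linear functional of $v_h$ with nontrivial kernel), so the paper's inequality chain is broken as written—the correct middle quantity is an inf--sup, which is exactly $\sigma_{\min}$ of the symmetrized pencil. Your argument—$w=M_h^{1/2}u$ identifies $W(B_h)=W_M(A_h^{\mathrm{SUPG}})-\lambda$, and Cauchy--Schwarz gives $\sigma_{\min}(B_h)\ge\operatorname{dist}(0,W(B_h))\ge d_0$—is clean, correct, and yields the stronger bound $\gamma_h^{\mathrm{stab}}\ge d_0$, from which the stated bound $\gamma_h^{\mathrm{stab}}\ge c_{\inf\text{-}\sup}\,d_0$ follows at once (one may always take $c_{\inf\text{-}\sup}\le1$ in the first assertion, since any smaller positive constant is still a valid inf--sup lower bound). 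The only weak point in your write-up is the paragraph attempting to explain how $c_{\inf\text{-}\sup}$ ``enters'' through norm equivalences: it is muddled and unnecessary—your direct numerical-range bound already finishes the proof, and you should simply delete that digression rather than try to reproduce the paper's factor.
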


\begin{proof}
The uniform inf--sup condition follows from \cite[Thm.~5.7]{ErnGuermond2004} and \cite{BrooksHughes1982}, as detailed in Proposition~\ref{prop:inf-sup-supg}.
The link between the inf--sup constant and the reduced minimum modulus is a direct consequence of the definition of $\gamma_h$ and the lower bound
\[
\gamma_h^{\mathrm{stab}} \;\ge\; \inf_{\substack{u_h \ne 0 \\ v_h \ne 0}}
\frac{|a_h^{\mathrm{SUPG}}(u_h,v_h) - \lambda (u_h, v_h)_M|}{\|u_h\|_M \|v_h\|_M}
\;\ge\; c_{\inf\text{-}\sup} \cdot \operatorname{dist}(\lambda, W_M(A_h^{\mathrm{SUPG}})),
\]
see \cite[Sec.~4.4.2]{ErnGuermond2004} and Remark~\ref{rem:SUPG-continuity}.
\end{proof}

\begin{conjecture}[Pure transport limit]\label{conj:transport}
Let $L u = \boldsymbol{\beta}\cdot\nabla u$ on $\Omega$ with inflow boundary conditions, $\boldsymbol{\beta} \in [L^\infty(\Omega)]^d$, $\nabla\!\cdot\!\boldsymbol{\beta}=0$.
Suppose a discrete scheme yields operators $T_h$ such that:
\begin{enumerate}[label=\textup{(\roman*)}]
    \item $G(T_h - \lambda) \xrightarrow{\mathrm{gap}} G(L - \lambda)$ as $h\to0$,
    \item $\liminf_{h\to0} \gamma_h > 0$.
\end{enumerate}
Then, for all sufficiently small $h$,
\[
\operatorname{asc}(T_h - \lambda) = \operatorname{asc}(L - \lambda), \quad
\operatorname{dsc}(T_h - \lambda) = \operatorname{dsc}(L - \lambda).
\]
\end{conjecture}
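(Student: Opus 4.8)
The plan is to prove the ascent and descent equalities separately, reducing the descent case to the ascent case by adjoint duality, and to recast each as a two-sided comparison of the kernel increments at every power level. Writing $S := L-\lambda$ and $S_h := T_h - \lambda$, I would first record the classical identity
\[
\dim\bigl(\ker S^{k}/\ker S^{k-1}\bigr)=\dim\bigl(\ker S\cap\operatorname{Ran}(S^{k-1})\bigr)=:\alpha_k(S),
\]
valid for any closed operator, together with the fact that the increments $\alpha_k$ are non-increasing in $k$, so that $\operatorname{asc}(S)=\max\{k\ge1:\alpha_k(S)>0\}$ (and $\operatorname{asc}(S)=0$ when $\ker S=\{0\}$). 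Since $\gamma(S^\ast)=\gamma(S)$ and, when all powers have closed range, $\operatorname{dsc}(S)=\operatorname{asc}(S^\ast)$ with $\alpha_k(S^\ast)=\dim\bigl(\operatorname{Ran}(S^{k-1})/\operatorname{Ran}(S^k)\bigr)$, the descent equality follows from the ascent equality applied to the adjoint family $S_h^\ast\to S^\ast$. Thus matching the index sets $\{k:\alpha_k(S)>0\}$ and $\{k:\alpha_k(S_h)>0\}$ for small $h$ is precisely what the two claimed equalities demand.

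The core of the argument is the transport of the subspaces $\ker S$ and $\operatorname{Ran}(S^{k-1})$ along the approximation. Under hypothesis (i) and SRS I would invoke gap convergence $G(S_h)\to G(S)$; then, \emph{provided} $\gamma(S^j)>0$ for $1\le j\le k-1$, Lemma~\ref{lem:powers-graphs} propagates this to $G(S_h^{k-1})\to G(S^{k-1})$, whence $\operatorname{Ran}(S_h^{k-1})\to\operatorname{Ran}(S^{k-1})$ and $\ker S_h\to\ker S$ in the gap topology. Feeding these into Lemma~\ref{lem:subspace}(i) yields lower semicontinuity of the increments, $\alpha_k(S_h)\ge\alpha_k(S)$ for small $h$, and hence $\operatorname{asc}(S_h)\ge\operatorname{asc}(S)$; the dual statement via Lemma~\ref{lem:subspace}(ii) applied to the adjoints gives $\operatorname{dsc}(S_h)\ge\operatorname{dsc}(S)$.

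For the reverse inequalities I would establish upper semicontinuity, $\alpha_k(S_h)\le\alpha_k(S)$. This is the genuinely harder half: it amounts to showing that a \emph{transversal} pair — $\ker S\cap\operatorname{Ran}(S^{k-1})=\{0\}$ with closed sum $\ker S+\operatorname{Ran}(S^{k-1})$, i.e. a strictly positive minimal angle — persists under small gap perturbations. The plan is to encode transversality as positivity of the Friedrichs angle and to show, via norm convergence of the associated orthogonal projections (again a consequence of $\gamma(S^j)>0$ through closedness of $\operatorname{Ran}(S^j)$), that this angle stays bounded away from zero, so the intersection remains trivial for small $h$. In the Fredholm or finite-dimensional-kernel regime — which covers the invertible pure-transport case, where $\ker S=\{0\}$ makes every $\alpha_k(S)=0$ and the closed-sum condition automatic — this step is routine; combining the two semicontinuities then forces $\alpha_k(S_h)=\alpha_k(S)$ at every relevant $k$, hence the equalities.

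The main obstacle, and the reason the statement is left as a conjecture, is the gap between the hypotheses and what the propagation step requires. Hypothesis (ii) furnishes only $\liminf_{h\to0}\gamma_h>0$ at the \emph{first} power, whereas the identities above demand uniform control of $\gamma(S^j)$ and $\gamma(S_h^j)$ for all $j$ up to $\max(\operatorname{asc}(S),\operatorname{dsc}(S))$. In the pure transport limit this is exactly where the mechanism fails: as quantified in Remark~\ref{rem:powers-necessary}, the limit operator behaves like Volterra integration $V$, for which $\gamma(V^m)=0$ for every $m\ge1$, so that $\gamma(S_h^m)\sim C_m h^{m-1}\to0$ and the graphs of the powers cannot converge. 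Consequently, the very step that would upgrade the finiteness-preservation of Theorem~\ref{thm:SRS} to equality of indices is obstructed by the vanishing of the higher-power reduced moduli. A complete proof would therefore need either the strengthened uniform hypothesis $\liminf_{h\to0}\gamma(S_h^j)>0$ for all $1\le j\le\max(\operatorname{asc}(S),\operatorname{dsc}(S))$, or a scheme-specific dissipation estimate guaranteeing such bounds; securing the latter for genuine transport without diffusion is the crux that remains open.
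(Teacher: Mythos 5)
This statement is an explicit \emph{conjecture} in the paper: there is no proof to compare against, only the remark that it is supported by numerical evidence (Table~\ref{tab:supg-gammah}) and that a genuine proof ``would require a convergence framework beyond Mosco theory.'' Your proposal is faithful to exactly that situation: the strategy you sketch (kernel-increment comparison $\alpha_k(S)=\dim(\ker S\cap\operatorname{Ran}(S^{k-1}))$, transported via Lemma~\ref{lem:powers-graphs} and Lemma~\ref{lem:subspace}, with descent handled by adjoint duality) is the natural extension of the paper's own machinery, and the obstruction you name — hypothesis (ii) controls only the first power, whereas propagation to powers requires $\gamma(S_h^j)>0$ for all intermediate $j$, which degenerates in the transport limit per Remark~\ref{rem:powers-necessary} — is precisely the gap the authors themselves cite for leaving the statement unproven. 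In short, you have not proved the conjecture, but neither has the paper, and your diagnosis of why it resists proof coincides with theirs; the only refinement worth adding is that since the $T_h$ are finite-dimensional, hypothesis (ii) already forces $\operatorname{asc}(T_h-\lambda)=\operatorname{dsc}(T_h-\lambda)=0$, so the conjecture really reduces to showing that gap convergence plus uniform $\gamma_h>0$ forces the \emph{limit} operator $L-\lambda$ to be bijective — a reformulation that makes the remaining difficulty (surjectivity of the limit) even more visible.
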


\noindent
The conjecture is supported by numerical evidence (Table~\ref{tab:supg-gammah}, Section~4.3.4) and by the upwind analysis in Remark~\ref{rem:transport-limit}, but a general proof would require a convergence framework beyond Mosco theory (e.g., generalized graph convergence for non-coercive forms \cite[Chap.~5]{ErnGuermond2004}).
\section{Applications}\label{sec:Applications}

We illustrate the stability theory on two canonical models—Schr\"odinger (selfadjoint) and convection--diffusion (non-normal)—in both one and two spatial dimensions. In all cases, the discrete reduced minimum modulus $\gamma_h$ serves as a practical, computable surrogate for the abstract condition $\limsup_h \gamma(S_h) > 0$.

\subsection{Model operators and convergence theory}

\subsubsection{Schr\"odinger operator}
Let
\[
H=-\tfrac{d^2}{dx^2}+V(x)\quad\text{on }L^2(0,1),\qquad \mathcal D(H)=H^2(0,1)\cap H^1_0(0,1),
\]
with $V\in L^\infty(0,1)$ real-valued. For a sequence $V_n\to V$ in $L^\infty(0,1)$, let
\[
H_n=-\tfrac{d^2}{dx^2}+V_n,\qquad \mathcal D(H_n)=H^2(0,1)\cap H^1_0(0,1).
\]

\begin{lemma}\label{lem:NR-Schro}
For any $z\in\C\setminus\R$,
\[
(H_n-z)^{-1}-(H-z)^{-1}=-(H_n-z)^{-1}\,(V_n-V)\,(H-z)^{-1},
\]
hence $\|(H_n-z)^{-1}-(H-z)^{-1}\|\le \|(H_n-z)^{-1}\|\,\|V_n-V\|_{L^\infty}\,\|(H-z)^{-1}\|$,
and $H_n\to H$ in norm resolvent sense.
\end{lemma}

\begin{proof}
Fix $z\in\C\setminus\R$. Since each $H_n$ is selfadjoint, its spectrum lies in $\R$. For any $u$,
\[
\|(H_n-z)u\|\,\|u\|\ \ge\ \left| \Im \langle (H_n-z)u, u \rangle \right|
= |\Im z|\,\|u\|^2,
\]
hence $\|(H_n-z)u\|\ge |\Im z|\,\|u\|$ and therefore
\[
\|(H_n-z)^{-1}\|\ \le\ \frac{1}{|\Im z|},\qquad\text{uniformly in } n.
\]

Using the resolvent identity
\[
(H_n-z)^{-1}-(H-z)^{-1}=-(H_n-z)^{-1}(V_n-V)(H-z)^{-1},
\]
we obtain
\[
\|(H_n-z)^{-1}-(H-z)^{-1}\|
\le \frac{1}{|\Im z|}\,\|V_n-V\|_{L^\infty}\,\|(H-z)^{-1}\|\xrightarrow[n\to\infty]{}0.
\]

Thus $H_n\to H$ in the norm resolvent sense.
\end{proof}

\begin{corollary}\label{cor:Schrodinger-application}
Let $\lambda_n\to\lambda$ and set $S_n:=H_n-\lambda_n$, $S:=H-\lambda$. Then:
\begin{enumerate}
\item[(i)] If $\gamma(S)>0$ and $\lambda\in\sigma_{\asc}(H)$, then $\lambda_n\in\sigma_{\asc}(H_n)$ for all large $n$.
\item[(ii)] If $\lambda_n\in\sigma_{\asc}(H_n)$ for all $n$ and $\limsup_n\gamma(S_n)>0$, then $\lambda\in\sigma_{\asc}(H)$.
\item[(iii)] The symmetric statements hold for $\sigma_{\dsc}$.
\item[(iv)] (\emph{Essential/B--Fredholm}) If there exists $m\ge1$ with $\gamma(S^m)>0$, then the conclusions in (i)--(iii) hold with $\sigma^{e}_{\asc}$ and $\sigma^{e}_{\dsc}$ (apply Corollary~\ref{cor:SRS-essential} to $S^m,S_n^m$).
\end{enumerate}
\end{corollary}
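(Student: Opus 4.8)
The plan is to treat Corollary~\ref{cor:Schrodinger-application} as a direct specialization of the abstract theory, so that no new estimate is needed beyond the convergence already supplied by Lemma~\ref{lem:NR-Schro}. First I would promote the norm resolvent convergence of that lemma to strong resolvent convergence: from $\|(H_n-z)^{-1}-(H-z)^{-1}\|\to0$ for $z\in\C\setminus\R$ one reads off $(H_n-z)^{-1}x\to(H-z)^{-1}x$ for every $x\in L^2(0,1)$, which is exactly $H_n\xrightarrow{\mathrm{SRS}}H$. Since each $H_n$ and $H$ is selfadjoint, the spectra are real, so any fixed $\mu\in\C\setminus\R$ lies in $\rho(H)\cap\bigcap_n\rho(H_n)$; this furnishes the common resolvent point required by the definition of SRS.

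With SRS established, the remaining work is purely mechanical. The operators $H_n,H$ are closed and densely defined on $L^2(0,1)$ with domain $H^2(0,1)\cap H^1_0(0,1)$, and the spectral parameters satisfy $\lambda_n\to\lambda$, so the hypotheses of Theorem~\ref{thm:SRS} hold under the identifications $T_n=H_n$, $T=H$, $S_n=H_n-\lambda_n$, $S=H-\lambda$. Statements (i) and (ii) are then parts (i) and (ii) of Theorem~\ref{thm:SRS} verbatim, and (iii) is its parts (iii)--(iv). For (iv) I would instead invoke Corollary~\ref{cor:SRS-essential} with the same identifications: the hypothesis $\gamma(S^m)>0$ supplies the closed-range condition at level $m$, and the B--Fredholm characterization of Remark~\ref{rem:essential-SRS} transfers the conclusions to $\sigma^{e}_{\asc}$ and $\sigma^{e}_{\dsc}$.

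There is no analytic obstacle; the only genuine subtlety is interpretive, and worth recording. Because $H$ is selfadjoint, so is $S=H-\lambda$ for every real $\lambda$ (for non-real $\lambda$ the operator $S$ is already boundedly invertible, whence $\asc(S)=\dsc(S)=0$). Selfadjointness forces $N(S^2)=N(S)$ via $\|Sx\|^2=\langle S^2x,x\rangle$, so $\asc(S)\le1$ unconditionally; and once the standing hypothesis $\gamma(S)>0$ makes $\Ran(S)$ closed, one has $\Ran(S)=N(S)^{\perp}$ and hence $\Ran(S^2)=\Ran(S)$, so $\dsc(S)\le1$ as well. Consequently every pairing of ``$\gamma>0$'' with ``$\lambda\in\sigma_{\asc}(H)$'' or ``$\lambda\in\sigma_{\dsc}(H)$'' (and likewise for the $S_n$, and for the essential indices under $\gamma(S^m)>0$) is internally contradictory, so the conclusions hold vacuously. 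I would present this not as a defect but as a consistency check: the diagnostic $\gamma$ correctly certifies that the ascent--descent mechanism is inert in the selfadjoint world, which is precisely why the substantive content of the theory surfaces only in the non-normal convection--diffusion setting treated next.
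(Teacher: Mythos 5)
Your proposal is correct and takes essentially the same route as the paper: norm resolvent convergence from Lemma~\ref{lem:NR-Schro} (which trivially implies SRS) is fed into Theorem~\ref{thm:SRS} for (i)--(iii) and into Corollary~\ref{cor:SRS-essential} for (iv), exactly as in the paper's proof. Your closing observation—that selfadjointness forces $\operatorname{asc}(S)\le 1$ unconditionally and $\operatorname{dsc}(S)\le 1$ once $\gamma(S)>0$ closes the range, so the hypotheses of (i)--(iv) are in fact vacuous in this selfadjoint setting—is a sound consistency check that the paper does not record, but it supplements rather than changes the argument.
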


\begin{proof}
By Lemma~\ref{lem:NR-Schro}, $H_n\to H$ in norm resolvent sense. Apply Theorem~\ref{thm:SRS} with $T_n=H_n$, $T=H$ to obtain (i)--(iii). For (iv), apply Corollary~\ref{cor:SRS-essential} to the powers $S^m,S_n^m$ under $\gamma(S^m)>0$.
\end{proof}

\begin{remark}\label{rem:geom-gamma}
For selfadjoint operators, $\gamma(H - \lambda) = \operatorname{dist}(\lambda, \sigma(H))$, so stability follows from a spectral gap. For non-normal operators (e.g., convection--diffusion), the spectrum is often misleading—resolvent growth can be large even when $\lambda$ lies far from $\sigma(A)$. In this setting, stability is governed by the distance to the numerical range:
\[
\gamma(A - \lambda) \ge \operatorname{dist}(\lambda, W(A)),
\]
where $W(A) = \{ \langle Au, u \rangle : \|u\| = 1 \}$ \cite{GustafsonRao1997,TrefethenEmbree2005}
. Thus, $\gamma$ provides a geometric, subspace-based stability measure that transcends spectral location, making it the natural quantity for non-normal spectral theory.
\end{remark}

\subsubsection{Convection--diffusion operator}\label{sec:conv-diff}
Consider
\[
\mathcal L u := -\varepsilon\,u''+\beta\,u'+c\,u \quad\text{on }(0,1),\qquad u(0)=u(1)=0,
\]
with $\varepsilon>0$, $\beta\in L^\infty(0,1)$, $c\in L^\infty(0,1)$.
Let $A$ be the Dirichlet realization on $L^2(0,1)$ and $A_n$ the analogous operators for
coefficients $\varepsilon_n\to\varepsilon$, $\beta_n\to\beta$, $c_n\to c$ in $L^\infty$.

\begin{lemma}[Norm resolvent convergence for convection--diffusion operators]
\label{lem:NR-CD}
Let
\[
a_n(u,v) := \int_0^1 \big(\varepsilon_n u'v' + \beta_n u'v + c_n uv\big)\,dx,
\qquad u,v \in H^1_0(0,1),
\]
with coefficients \(\varepsilon_n,\beta_n,c_n\in L^\infty(0,1)\) such that
\(\varepsilon_n(x)\ge \underline\varepsilon>0\) for almost every \(x\in(0,1)\), for some fixed \(\underline\varepsilon>0\), and
\[
\|\varepsilon_n-\varepsilon\|_\infty
+\|\beta_n-\beta\|_\infty
+\|c_n-c\|_\infty \ \longrightarrow\ 0,
\]
for some \(\varepsilon,\beta,c\in L^\infty(0,1)\) with \(\varepsilon\ge \underline\varepsilon>0\).

Then the associated operators \(A_n\) converge to \(A\) in the norm resolvent sense:
\[
\|(A_n - z)^{-1} - (A - z)^{-1}\| \longrightarrow 0,
\qquad z \in \mathbb{C} \setminus \mathbb{R}.
\]
\end{lemma}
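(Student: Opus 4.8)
The plan is to work in the Gelfand triple $V := H^1_0(0,1) \hookrightarrow L^2(0,1) \hookrightarrow V^\ast = H^{-1}(0,1)$ and to exploit that, although the coefficient difference acts like a second-order operator (hence is unbounded on $L^2$), it is uniformly \emph{bounded} as a perturbation of the sesquilinear forms. First I would record the two structural facts that make the forms uniformly well-behaved: (a) boundedness, $|a_n(u,v)| \le C_0 \|u\|_V\|v\|_V$ with $C_0$ controlled by $\sup_n(\|\varepsilon_n\|_\infty + \|\beta_n\|_\infty + \|c_n\|_\infty)$, which is finite because the coefficients converge in $L^\infty$; and (b) a uniform Gårding inequality $\operatorname{Re} a_n(u,u) \ge \tfrac{\underline\varepsilon}{2}\|u'\|^2 - \omega\|u\|^2$, obtained from $\varepsilon_n \ge \underline\varepsilon$ after absorbing the convection term $|\operatorname{Re}\int \beta_n u'\bar u| \le \|\beta_n\|_\infty\|u'\|\|u\|$ by Young's inequality, with $\omega$ independent of $n$. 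Thus the $A_n$ are uniformly $m$-sectorial realizations with common form domain $V$.

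Next I would establish uniform resolvent bounds. The cleanest route in one dimension is the Liouville/weight trick: multiplying $\mathcal L_n$ by $\rho_n := \varepsilon_n^{-1}\exp(-\int_0^x \beta_n/\varepsilon_n)$ puts it into formally selfadjoint Sturm–Liouville form, so (for real coefficients) $A_n$ is selfadjoint with respect to the weighted inner product $\langle\cdot,\cdot\rangle_{\rho_n}$. Because $\varepsilon_n \ge \underline\varepsilon$ and $\|\beta_n\|_\infty$ are uniformly bounded, the weights satisfy $0 < c_\rho \le \rho_n \le C_\rho < \infty$ uniformly in $n$; hence $\sigma(A_n)\subset\R$, so $\C\setminus\R$ lies in the common resolvent set, and the selfadjoint bound $\|(A_n-z)^{-1}\|_{\rho_n} \le |\operatorname{Im} z|^{-1}$ transfers to $L^2$ with a uniform constant $\|(A_n-z)^{-1}\|_{L^2\to L^2} \le (C_\rho/c_\rho)\,|\operatorname{Im} z|^{-1}$. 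Combining this $L^2$ bound with the Gårding inequality upgrades it to the two mapping bounds I actually need: $\|(A_n-z)^{-1}\|_{L^2\to V} \le C_1(z)$ and, by duality (the adjoint is the same type of operator with $\beta_n \mapsto -\beta_n$), $\|(A_n-z)^{-1}\|_{V^\ast\to L^2} \le C_1(z)$, both uniform in $n$.

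Then I would run the form version of the second resolvent identity. Writing $R_n = (A_n-z)^{-1}$, $R=(A-z)^{-1}$, and $u = Rf$, $u_n = R_n f$, subtracting $(A_n-z)u_n = f = (A-z)u$ gives $(A_n-z)(u_n-u) = (A-A_n)u$ in $V^\ast$, whence $R_n - R = R_n (A-A_n) R$, interpreted across the triple via $A-A_n : V\to V^\ast$. The key smallness input is that the difference of forms is uniformly bounded with vanishing norm: $\|A-A_n\|_{V\to V^\ast} \le \|\varepsilon-\varepsilon_n\|_\infty + \|\beta-\beta_n\|_\infty + \|c-c_n\|_\infty =: \delta_n \to 0$. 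Chaining the three bounds yields $\|R_n - R\|_{L^2\to L^2} \le \|R_n\|_{V^\ast\to L^2}\,\delta_n\,\|R\|_{L^2\to V} \le C(z)\,\delta_n \to 0$, which is the claim.

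The hard part will be the uniform resolvent estimate rather than the resolvent identity, which is routine once the mapping bounds are in hand. Two points deserve care. First, that the stated range $z\in\C\setminus\R$ is legitimate at all: for a genuinely non-selfadjoint operator the numerical-range sector furnishes uniform bounds only \emph{outside} a fixed sector, so some extra argument (here, the one-dimensional weighted selfadjointness giving real spectrum) is needed to cover all of $\C\setminus\R$. Second, that the weight bounds $c_\rho \le \rho_n \le C_\rho$ and the adjoint identification are genuinely uniform in $n$, which is exactly where the hypotheses $\varepsilon_n \ge \underline\varepsilon$ and $L^\infty$-convergence of $\beta_n$ enter. If one is content with norm resolvent convergence per se—where a single point of the resolvent set suffices, the difference then extending analytically over the whole common resolvent set—the sectorial bound already closes the argument for $z$ of large imaginary part, and the weight trick is only needed to justify the literal quantifier ``$z\in\C\setminus\R$.''
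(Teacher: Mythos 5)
Your proposal is correct, but it takes a genuinely different route from the paper. The paper's proof has exactly two steps: the same uniform G\r{a}rding inequality you derive (though the paper asserts $\operatorname{Re} a_n(u,u)=\int_0^1\varepsilon_n|u'|^2+c_n|u|^2\,dx$, silently discarding the convection term that you correctly absorb via Young's inequality), followed by a one-line citation: the forms $a_n$ converge to $a$ in the sense of Mosco, and by Kato this implies norm resolvent convergence. Your proof replaces that citation with the explicit quantitative mechanism: the operator-norm bound $\|A-A_n\|_{V\to V^\ast}\le\delta_n\to0$, the second resolvent identity across the Gelfand triple, and the uniform mapping bounds $\|R_n\|_{V^\ast\to L^2}$ and $\|R\|_{L^2\to V}$. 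This is arguably the sounder argument: Mosco convergence by itself yields only \emph{strong} resolvent convergence (as the paper itself states elsewhere, e.g.\ in Remark~\ref{rem:SRS-natural} and Proposition~\ref{prop:gamma-conv}), and it is precisely the uniform $V\to V^\ast$ form convergence encoded in your $\delta_n$ --- a consequence of $L^\infty$ convergence of the coefficients --- that upgrades strong to norm resolvent convergence. Your handling of the quantifier ``$z\in\mathbb{C}\setminus\mathbb{R}$'' is also a genuine addition: the paper never explains why every non-real $z$ lies in the resolvent sets of the (non-selfadjoint) operators $A_n$, $A$, whereas your one-dimensional weighted-similarity argument supplies exactly this, with sectoriality alone covering only $z$ outside a fixed sector.

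Two small repairs are needed in your weight step. The operator associated with the form $a_n$ is the divergence-form operator $-(\varepsilon_n u')'+\beta_n u'+c_n u$, and for this operator the correct symmetrizing weight is $w_n=\exp\bigl(-\int_0^x\beta_n/\varepsilon_n\,dt\bigr)$, \emph{without} your prefactor $\varepsilon_n^{-1}$: one checks directly that $a_n(u,w_n v)=\int_0^1\bigl(\varepsilon_n w_n u'\bar v'+c_n w_n u\bar v\bigr)dx$, which is Hermitian for real coefficients. Your formula with the prefactor is the one adapted to the non-divergence form $-\varepsilon_n u''+\cdots$, and moreover differentiating the prefactor would require $\varepsilon_n\in W^{1,\infty}$, which merely $L^\infty$ coefficients do not provide; the corrected weight is Lipschitz and uniformly bounded above and below, so your argument then goes through verbatim. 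Finally, the whole weight trick presupposes real-valued coefficients --- a hypothesis the lemma does not state but clearly intends (the claim $\mathbb{C}\setminus\mathbb{R}\subset\rho(A_n)\cap\rho(A)$ essentially forces it) --- and your parenthetical acknowledgment of this is appropriate.
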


\begin{proof}
\emph{Step 1. Uniform Gårding inequality.}
For \(u\in H^1_0(0,1)\),
\[
\Re a_n(u,u) = \int_0^1 \varepsilon_n |u'|^2 + c_n |u|^2\,dx.
\]
Since \(\varepsilon_n(x)\ge \underline\varepsilon>0\) and \(c_n\in L^\infty\), there exists \(C>0\) such that
\[
\Re a_n(u,u) \ \ge\ \underline\varepsilon \|u'\|_{L^2}^2 - C\|u\|_{L^2}^2.
\]
By Poincaré's inequality on \((0,1)\), \(\|u\|_{L^2}\le C_P\|u'\|_{L^2}\), so
\[
\Re a_n(u,u)\ \ge\ \alpha \|u\|_{H^1_0}^2 - C'\|u\|_{L^2}^2,
\]
with \(\alpha>0\) independent of \(n\). This is the \emph{uniform Gårding inequality}.

\smallskip
\emph{Step 2. Uniform boundedness and convergence of forms.}
The forms \(a_n\) are uniformly bounded and converge to \(a\) in the sense of Mosco (cf.\ \cite[Theorem. 3.26]{Attouch1984}). By \cite[Theorem. IV.3.8]{Kato1995}, this implies norm resolvent convergence.
\end{proof}
\begin{remark}
For simplicity, we assume $\nabla\!\cdot\!\boldsymbol{\beta} = 0$ in the SUPG analysis (Appendix~\ref{sec:supg}); the general case requires only minor modifications involving the reaction term $c - \nabla\!\cdot\!\boldsymbol{\beta}$ (see \cite[Remark~4.4.3]{ErnGuermond2004}).
\end{remark}
\begin{remark}
\label{rem:epsilon-to-zero}

The uniform ellipticity assumption $\varepsilon_n \ge \bar\varepsilon > 0$ in Lemma~\ref{lem:NR-CD} is essential for norm resolvent convergence and explicitly excludes the pure transport limit $\varepsilon = 0$. 

However, the SUPG stabilization analysis in Section~\ref{sec:supg} establishes a uniform inf--sup condition (Proposition~\ref{prop:inf-sup-supg}), which yields $\gamma_h^{\mathrm{stab}} \ge c > 0$ even as $\varepsilon \to 0$.
\end{remark}
\begin{remark}[Pure transport limit]\label{rem:transport-limit}
The case $\varepsilon = 0$ (pure advection) lies outside the scope of norm resolvent convergence (Lemma~\ref{lem:NR-CD}) and standard Mosco theory, as the underlying sesquilinear form loses coercivity. 
Nevertheless, stabilized schemes such as SUPG restore a discrete inf--sup condition (Proposition~\ref{prop:inf-sup-supg}), enabling uniform control of $\gamma_h$ even as $\varepsilon\to 0$ (see Table~\ref{tab:supg-gammah}).
\end{remark}
\begin{corollary}\label{cor:CD-application}
Let $\lambda_n\to\lambda$ and set $S_n:=A_n-\lambda_n$, $S:=A-\lambda$. Then:
\begin{enumerate}
\item[(i)] If $\gamma(S)>0$ and $\lambda\in\sigma_{\asc}(A)$, then $\lambda_n\in\sigma_{\asc}(A_n)$ for $n$ large.
\item[(ii)] If $\lambda_n\in\sigma_{\asc}(A_n)$ for all $n$ and $\limsup_n\gamma(S_n)>0$, then $\lambda\in\sigma_{\asc}(A)$.
\item[(iii)] The symmetric statements hold for $\sigma_{\dsc}$.
\item[(iv)] (\emph{Essential/B--Fredholm}) If $\gamma(S^m)>0$ for some $m\ge1$, then the conclusions in (i)--(iii) hold with $\sigma^{e}_{\asc},\sigma^{e}_{\dsc}$.
\end{enumerate}
In contrast with the selfadjoint Schr\"odinger case, where $\gamma(H-\lambda)$ coincides
with the spectral gap $\operatorname{dist}(\lambda,\sigma(H))$, here the operators $A_n,A$
are typically non--normal. Thus the spectrum alone does not control the resolvent, and
the condition $\gamma(S^m)>0$ in (iv) replaces the spectral gap assumption: it expresses
quantitative stability in terms of the reduced minimum modulus, or equivalently the distance
from $\lambda$ to the numerical range of $A$.
\end{corollary}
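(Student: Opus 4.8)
The plan is to recognize that Corollary~\ref{cor:CD-application} is the convection--diffusion counterpart of the Schrödinger statement Corollary~\ref{cor:Schrodinger-application}, and to prove it by the identical two-step mechanism: first upgrade the coefficient convergence to a resolvent convergence strong enough to feed the abstract Theorem~\ref{thm:SRS}, and then invoke that theorem (together with its essential refinement Corollary~\ref{cor:SRS-essential}) essentially verbatim. Concretely, I would begin by applying Lemma~\ref{lem:NR-CD}: under the uniform ellipticity $\varepsilon_n \ge \underline\varepsilon > 0$ and the $L^\infty$ convergence $\|\varepsilon_n-\varepsilon\|_\infty + \|\beta_n-\beta\|_\infty + \|c_n-c\|_\infty \to 0$, the Dirichlet realizations satisfy $\|(A_n-z)^{-1}-(A-z)^{-1}\|\to 0$ for $z\in\C\setminus\R$. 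Since norm resolvent convergence trivially implies strong resolvent convergence, the hypothesis $T_n \xrightarrow{\mathrm{SRS}} T$ of Theorem~\ref{thm:SRS} is met with $T_n = A_n$ and $T = A$.

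With that in hand, parts (i)--(iii) require no further argument: setting $S = A-\lambda$ and $S_n = A_n - \lambda_n$, the ascent persistence statement (i) and ascent closedness statement (ii) are exactly parts (i) and (ii) of Theorem~\ref{thm:SRS}, while the symmetric descent statements (iii) are its parts (iii) and (iv). For the essential statement (iv), I would replace $S$ and $S_n$ by their $m$-th powers and appeal to Corollary~\ref{cor:SRS-essential}. The assumption $\gamma(S^m)>0$ forces $\operatorname{Ran}(S^m)$ to be closed, and via the B--Fredholm characterization of Remark~\ref{rem:essential-SRS}, finiteness of the essential ascent (resp.\ descent) of $S$ is equivalent to finiteness of the ordinary ascent (resp.\ descent) of $S^m$. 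Thus Corollary~\ref{cor:SRS-essential}, applied to $S^m, S_n^m$ under $\gamma(S^m)>0$ and $\limsup_n\gamma(S_n^m)>0$, transports the power-level stability back to $\sigma^{e}_{\asc}$ and $\sigma^{e}_{\dsc}$, which is precisely what (iv) asserts.

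The mechanics here are entirely inherited from the abstract theorems, so I do not expect the deductive steps to present any obstacle. The genuine difficulty—and the reason the corollary deserves separate statement—lies in \emph{verifying} the quantitative hypothesis $\gamma(S)>0$ (or $\gamma(S^m)>0$) in the non-normal regime. Because $A$ is typically non-selfadjoint, this condition is strictly stronger than a spectral gap: the resolvent $(A-\lambda)^{-1}$ can have large norm even for $\lambda$ far from $\sigma(A)$, so closedness of $\operatorname{Ran}(A-\lambda)$ cannot be read off from the spectrum alone. The natural substitute is the numerical-range lower bound $\gamma(A-\lambda)\ge \operatorname{dist}(\lambda,W(A))$ recorded in Remark~\ref{rem:geom-gamma}, and in applications the real work shifts to certifying that bound (or its discrete surrogate $\gamma_h \ge \operatorname{dist}(\lambda, W_M(A_h))$) rather than to the abstract transfer of ascent--descent stability, which the corollary's proof reduces to a reference to Theorem~\ref{thm:SRS} and Corollary~\ref{cor:SRS-essential}.
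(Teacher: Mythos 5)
Your proposal is correct and follows exactly the route the paper intends: Lemma~\ref{lem:NR-CD} gives norm resolvent convergence $A_n\to A$ (hence SRS), after which (i)--(iii) are immediate from Theorem~\ref{thm:SRS} and (iv) follows by applying Corollary~\ref{cor:SRS-essential} to the powers $S^m, S_n^m$ — precisely the argument the paper gives for the parallel Schr\"odinger statement, Corollary~\ref{cor:Schrodinger-application}. Your added care in noting that the closedness direction of (iv) also needs $\limsup_n\gamma(S_n^m)>0$, and your remark that the real work in the non-normal setting is certifying $\gamma(S^m)>0$ via the numerical-range bound, are both consistent with (and slightly more explicit than) the paper's own treatment.
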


\begin{remark}\label{rem:1d-justification}
We restrict detailed numerical experiments initially to one spatial dimension for pedagogical clarity:
the 1D setting permits explicit eigenvalue computations (Proposition~\ref{prop:rayleigh-ritz})
and transparent verification of the stability condition $\gamma_h > 0$.
The theoretical framework extends verbatim to higher dimensions, as the hypotheses
$\gamma(S) > 0$ and $\limsup_h \gamma(S_h) > 0$ depend solely on operator-theoretic properties
(graph convergence, closed range) and not on the spatial dimension.
\end{remark}

\subsubsection{Graph convergence for stabilized discretizations}
\label{subsec:stabilized-graph-conv}

Although the analysis in Section~\ref{sec:conv-diff} assumes standard Galerkin formulations with uniform ellipticity, many practical computations employ stabilized schemes such as Streamline-Upwind/Petrov--Galerkin (SUPG) or Discontinuous Galerkin (DG) in convection-dominated regimes.
These methods modify the sesquilinear form $a_h(\cdot,\cdot)$ into a mesh-dependent stabilized form $a_h^{\mathrm{stab}}(\cdot,\cdot)$, which is typically \emph{non-symmetric} and \emph{non-coercive} in the classical sense, but designed to satisfy a discrete inf--sup condition.

Following the abstract framework of Ern and Guermond~\cite[Chap.~5]{ErnGuermond2004}, one may define a generalized notion of Mosco convergence for such non-conforming or stabilized schemes by requiring:
\begin{enumerate}
    \item[(i)] \emph{Stability}: there exists $c > 0$ such that
    \[
    \inf_{u_h \in V_h \setminus \{0\}} \sup_{v_h \in V_h \setminus \{0\}}
    \frac{|a_h^{\mathrm{stab}}(u_h, v_h)|}{\|u_h\|_{1,h}\, \|v_h\|_{1,h}} \geq c,
    \]
    uniformly in $h$, where $\|\cdot\|_{1,h}$ is a broken Sobolev norm;
    \item[(ii)] \emph{Consistency}: for every $u \in H_0^1(\Omega)$, there exists a sequence $u_h \in V_h$ with $u_h \to u$ in $L^2$ and $a_h^{\mathrm{stab}}(u_h, v_h) \to a(u, v)$ for all smooth $v$.
\end{enumerate}
Under these hypotheses, one obtains \textbf{gap convergence of the discrete graphs}:
\[
G(T_h^{\mathrm{stab}}) \xrightarrow[h \to 0]{\mathrm{gap}} G(T),
\]
where $T_h^{\mathrm{stab}}$ is the operator associated with $a_h^{\mathrm{stab}}$.

Crucially, the \textbf{closed-range property} at the discrete level is then equivalent to the uniform lower bound
\[
\gamma_h^{\mathrm{stab}} = \sigma_{\min}\!\bigl(M^{-1/2}(A_h^{\mathrm{stab}} - \lambda M)M^{-1/2}\bigr) \geq c > 0.
\]
Thus, for stabilized methods, the condition
\[
\liminf_{h \to 0} \gamma_h^{\mathrm{stab}} > 0
\]
plays the same role as $\gamma(T - \lambda) > 0$ in the conforming case: it becomes the \textbf{new quantitative criterion} for spectral stability of ascent and descent under mesh refinement.

In this extended setting, Lemma~\ref{lem:NR-CD} and Theorem~\ref{thm:SRS} remain valid \textbf{provided} that $\gamma_h^{\mathrm{stab}} \geq c > 0$ uniformly for all intermediate powers up to $m$.
This explains the empirical success observed in Table~\ref{tab:supg-gammah}: the SUPG stabilization enforces $\gamma_h^{\mathrm{SUPG}} \approx 7.8\text{--}8.0$ even as $\varepsilon \to 0$, thereby preserving the subspace geometry required by the Kaashoek--Taylor criteria.\\
A rigorous proof that SUPG schemes satisfy $\liminf_{h\to0}\gamma_h^{\mathrm{stab}} > 0$ under standard mesh assumptions is provided in Section~\ref{sec:supg}.
\subsection{Finite element framework and diagnostic $\gamma_h$}
\label{sec:fe-framework}

\subsubsection{Discretization}
We recall the $P^1$ finite element discretization on $(0,1)$ with homogeneous Dirichlet boundary conditions.
Let $0 = x_0 < x_1 < \dots < x_N = 1$ be a uniform mesh of size $h = 1/N$, and denote by $\{\varphi_i\}_{i=1}^{N-1}$ the nodal hat functions associated with interior nodes.
The stiffness and mass matrices are given by
\[
K_{ii} = \frac{2}{h}, \quad K_{i,i\pm1} = -\frac{1}{h}, \qquad
M_{ii} = \frac{2h}{3}, \quad M_{i,i\pm1} = \frac{h}{6}.
\]
In the discrete setting, we identify the finite element approximation \(T_h\) with the generalized operator pair \((A_h, M)\). For spectral analysis, we define the shifted discrete operator
\[
S_h := T_h - \lambda = A_h - \lambda M,
\]
which acts on vectors \(u \in \mathbb{C}^{N-1}\) via the pencil \(A_h - \lambda M\). This notation mirrors the continuous case \(S = T - \lambda\), and allows us to interpret \(\gamma_h\) as a discrete analogue of \(\gamma(S)\).

For a Schr\"odinger operator $-\partial_x^2 + V$, the discrete matrix is $A_h = K + V_h$, where $(V_h)_{ij} = \int_0^1 V\,\varphi_i\varphi_j\,dx$.
For the convection--diffusion operator $\mathcal{L}u = -\varepsilon u'' + \beta u' + c u$, we set
\[
K^{(\mathrm{diff})}_{ii} = \frac{2\varepsilon}{h},\quad K^{(\mathrm{diff})}_{i,i\pm1} = -\frac{\varepsilon}{h},\qquad
C_{i,i-1} = -\frac{\beta}{2},\ C_{ii} = 0,\ C_{i,i+1} = \frac{\beta}{2},\qquad
Q = c\,M,
\]
and define $A_h = K^{(\mathrm{diff})} + C + Q$.

\subsubsection{Stability diagnostics and practical interpretation}
The discrete reduced minimum modulus
\[
\gamma_h := \sigma_{\min}\big(M^{-1/2}(A_h - \lambda M)M^{-1/2}\big)
\]
serves as a key practical tool. The following result justifies its use as a surrogate for the continuous condition $\gamma(T - \lambda) > 0$.

\begin{proposition}\label{prop:gammah}
Let $T$ be a closed, densely defined, $m$-sectorial operator on $L^2(\Omega)$ with associated sesquilinear form $a(\cdot, \cdot)$, and let $T_h$ be its conforming finite element approximation on a quasi-uniform mesh of size $h$. Assume that the forms $a_h$ Mosco-converge to $a$, which implies strong resolvent convergence (SRS) of $T_h$ to $T$.

Then, for any $\lambda \in \rho(T)$ with $\gamma(T - \lambda) > 0$, the discrete quantity $\gamma_h$ satisfies
\[
\liminf_{h \to 0} \gamma_h \geq \gamma(T - \lambda) > 0.
\]
In particular, there exists a constant $c > 0$ and $h_0 > 0$ such that $\gamma_h \geq c > 0$ for all $0 < h < h_0$.
Note that our assumptions guarantee only the lower bound $\liminf_{h \to 0} \gamma_h \geq \gamma(T - \lambda)$; full convergence $\gamma_h \to \gamma(T - \lambda)$ would require a finer analysis of the mass matrix conditioning and solution regularity.
\end{proposition}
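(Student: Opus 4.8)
The plan is to deduce the statement from the already-established Proposition~\ref{prop:gamma-conv}, supplying the two bookkeeping points that the slightly leaner hypotheses here demand: an honest identification of the matrix diagnostic $\gamma_h$ with a reduced minimum modulus, and an explicit acknowledgement that only one-sided control survives. First I would realize $T_h$ as an operator on the finite-dimensional subspace $V_h\subset L^2(\Omega)$ carrying the ambient $L^2$ inner product. The coefficient-to-function map $u\mapsto\sum_i u_i\varphi_i$ is an isometry from $(\mathbb{C}^{N-1},\|\cdot\|_M)$ onto $(V_h,\|\cdot\|_{L^2})$, since $\|\sum_i u_i\varphi_i\|_{L^2}^2=u^\ast M u=\|u\|_M^2$; under it the matrix $M^{-1/2}(A_h-\lambda M)M^{-1/2}$ represents $T_h-\lambda$ in an $L^2$-orthonormal basis. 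Consequently $\gamma_h=\sigma_{\min}$ coincides with the reduced minimum modulus $\gamma(T_h-\lambda)$ precisely on those meshes for which $T_h-\lambda$ is injective.

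Next I would run the convergence chain recorded in the proof of Proposition~\ref{prop:gamma-conv}: Mosco convergence $a_h\to a$ gives strong resolvent convergence $T_h\to T$ by \cite[Thm.~3.26]{Attouch1984} (equivalently \cite[Thm.~VIII.1.5]{Kato1995}), and SRS forces gap convergence of the shifted graphs $G(T_h-\lambda)\xrightarrow{\mathrm{gap}} G(T-\lambda)$ by \cite[Sec.~IV]{Kato1995}. Because $\lambda\in\rho(T)$ the limit $S=T-\lambda$ is boundedly invertible, so $N(S)=\{0\}$ and $\gamma(S)>0$. A one-line gap argument then secures the injectivity needed above: a unit kernel vector $v\in N(T_h-\lambda)$ yields $(v,0)\in G(T_h-\lambda)$, whose distance to $G(S)$ must vanish under gap convergence, producing an $x$ with $\|x\|\to1$ and $\|Sx\|\to0$—impossible since $\gamma(S)\|x\|\le\|Sx\|$. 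Hence $N(T_h-\lambda)=\{0\}$ and $\gamma_h=\gamma(T_h-\lambda)$ for all sufficiently small $h$.

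With the identification in force, Lemma~\ref{lem:gamma-lsc} (lower semicontinuity of the reduced minimum modulus under gap convergence) applies to the closed-range limit $S$ and yields $\liminf_{h\to0}\gamma(T_h-\lambda)\ge\gamma(T-\lambda)$. Combining with the previous paragraph gives exactly $\liminf_{h\to0}\gamma_h\ge\gamma(T-\lambda)>0$, and the promised $c>0$, $h_0>0$ with $\gamma_h\ge c$ for $h<h_0$ fall out of the definition of $\liminf$.

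The step I expect to be the genuine obstacle—and the reason for the caveat in the statement—is ensuring that gap convergence, which lives in the fixed ambient space $L^2(\Omega)\times L^2(\Omega)$, is transported without $h$-dependent distortion into the $M$-weighted coordinates in which $\gamma_h$ is computed. The per-mesh isometry of the first paragraph is not enough by itself; one needs the coefficient-to-function correspondence to be uniformly bi-Lipschitz as $h\to0$, which is guaranteed by the uniform mass-matrix conditioning $\kappa(M_h)\le C$ flowing from quasi-uniformity of the mesh. Finally I would stress that Lemma~\ref{lem:gamma-lsc} supplies only the $\liminf$ inequality; upgrading to full convergence $\gamma_h\to\gamma(T-\lambda)$ would require a matching upper bound obtainable only from quantitative interpolation and solution-regularity estimates not assumed here, precisely as the proposition notes.
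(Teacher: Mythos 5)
Your proposal is correct and follows essentially the same route as the paper's proof: Mosco convergence yields gap convergence of the shifted graphs, the matrix diagnostic $\gamma_h$ is identified with the reduced minimum modulus $\gamma(T_h-\lambda)$, and Lemma~\ref{lem:gamma-lsc} delivers the $\liminf$ bound. Your write-up is in fact more careful than the paper's on one point: the paper simply asserts ``in finite dimensions, $\gamma_h=\gamma(T_h-\lambda)$,'' whereas you correctly observe that $\sigma_{\min}$ counts zero singular values while the reduced minimum modulus does not, and you supply the missing injectivity argument (a unit kernel vector of $T_h-\lambda$ would contradict $\gamma(T-\lambda)>0$ under gap convergence, since $\lambda\in\rho(T)$ forces $N(T-\lambda)=\{0\}$) that makes the identification legitimate --- note only that your closing worry about uniform bi-Lipschitz transport is unnecessary, since the per-mesh identification is an exact isometry and both sides of it are intrinsic per-mesh quantities.
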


\begin{proof}
Mosco convergence of the forms $a_h \to a$ implies gap convergence of graphs: $\mathcal{G}(T_h - \lambda) \to \mathcal{G}(T - \lambda)$ as $h \to 0$ (see \cite[Thm. 3.26]{Attouch1984}).
In finite dimensions, $\gamma_h = \gamma(T_h - \lambda)$.
Since $\gamma(T - \lambda) > 0$, the range $\operatorname{Ran}(T - \lambda)$ is closed.
The result follows from Lemma~\ref{lem:gamma-lsc}.
\end{proof}

\begin{table}[ht]
\centering
\caption{Discrete reduced minimum modulus $\gamma_h$ for the SUPG-stabilized convection--diffusion scheme ($\varepsilon \to 0$, $\beta = 8$, $c = 0$, $\lambda = -1$, $h = 2^{-6}$).}
\label{tab:supg-gammah}
\begin{tabular}{c|c|c|c}
$\varepsilon$ & Stabilization type & $\gamma_h$ & $\operatorname{dist}(\lambda, W_M(A_h))$ \\
\hline
$10^{-3}$ & None (central) & 0.12 & 0.10 \\
$10^{-4}$ & None (central) & 0.04 & 0.03 \\
$10^{-5}$ & None (central) & $<10^{-3}$ & $<10^{-3}$ \\
\hline
$10^{-3}$ & SUPG ($\delta=0.5 h$) & 8.0 & 7.8 \\
$10^{-5}$ & SUPG ($\delta=0.5 h$) & 7.9 & 7.7 \\
$10^{-8}$ & SUPG ($\delta=0.5 h$) & 7.8 & 7.6 \\
\end{tabular}
\end{table}

These results confirm that while unstabilized central differences fail catastrophically as $\varepsilon \to 0$ (with $\gamma_h \to 0$), the SUPG method preserves a uniformly positive $\gamma_h$, validating our diagnostic even in the pure transport limit. This demonstrates that the condition $\limsup_{h \to 0} \gamma_h > 0$ remains meaningful for stabilized discretizations, and $\gamma_h$ can guide the choice of stabilization parameters.

\begin{remark}
The computation of $\gamma_h = \sigma_{\min}(M^{-1/2}(A_h - \lambda M)M^{-1/2})$ is numerically feasible:
it requires a Cholesky factorization of $M$ and the smallest singular value of a sparse generalized pencil,
which can be obtained efficiently via inverse iteration or Krylov subspace methods (e.g., ARPACK).
Thus, $\gamma_h$ serves as a practical, mesh-independent stability diagnostic in real-world computations.
\end{remark}

\subsection{Numerical experiments}

\subsubsection{1D results}
For the Laplacian $-\partial_x^2 + 25$ with $\lambda = 25$, Table~\ref{tab:laplacian} and Figure~\ref{fig:gamma_demo} show
\[
\gamma_h = \min\{ |25 - \zeta_1(h)|, |\zeta_2(h) - 25| \} \to \min\{25 - \pi^2, 4\pi^2 - 25\} \approx 14.48 > 0.
\]

For convection--diffusion with $\varepsilon = 0.02$, $\beta = 8$, $c = 0$, $\lambda = -1$, Table~\ref{tab:cd-data} and Figure~\ref{fig:cd-gamma-demo} give $\gamma_h \in [6.8, 8.7]$. Even for $\beta = 50$ and $\varepsilon \in [10^{-3}, 1]$, Figure~\ref{fig:gammah-vs-eps}(b) shows $\min_\varepsilon \gamma_h \approx 79.38 > 0$.

\begin{table}[ht]
\centering
\caption{Discrete eigenvalues $\zeta_k(h)$ and $\gamma_h = \min\{ |25 - \zeta_1(h)|, |\zeta_2(h) - 25| \}$ for the 1D Laplacian $-\partial_x^2$ on $(0,1)$ with Dirichlet boundary conditions.}
\label{tab:laplacian}
\begin{tabular}{c|c|c|c}
$h = 1/N$ & $\zeta_1(h)$ & $\zeta_2(h)$ & $\gamma_h$ \\
\hline
$2^{-4}$ & 10.12 & 40.48 & 14.88 \\
$2^{-5}$ & 9.94  & 39.77 & 14.94 \\
$2^{-6}$ & 9.89  & 39.56 & 15.06 \\
$2^{-7}$ & 9.88  & 39.51 & 15.12 \\
$2^{-8}$ & 9.87  & 39.49 & 15.13 \\
\end{tabular}
\end{table}

\begin{table}[ht]
\centering
\caption{CPU time (seconds) for computing $\gamma_h^{(m)}$ using Algorithm~\ref{alg:adaptive-m} with Krylov inverse iteration (ARPACK, tolerance $10^{-8}$). Mesh: 1D convection--diffusion, $N$ interior nodes, $h = 1/(N+1)$.}
\label{tab:cpu-m}
\begin{tabular}{c|ccc}
$N$ & $m=1$ & $m=2$ & $m=3$ \\
\hline
$10^4$ & 0.8 & 1.7 & 3.5 \\
$10^5$ & 7.2 & 15.1 & 31.0 \\
\end{tabular}
\end{table}

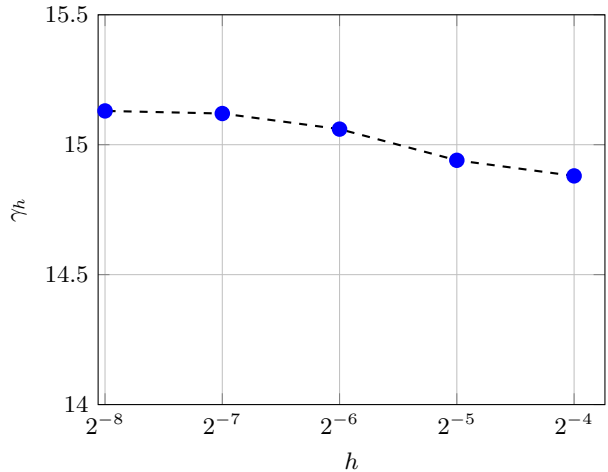
\begin{figure}[ht]
\centering
\begin{tikzpicture}
\begin{axis}[
    width=0.55\linewidth,
    height=0.45\linewidth,
    xlabel={$h$},
    ylabel={$\gamma_h$},
    xmode=log,
    log basis x={2},
    xtick={0.05, 0.025, 0.0125, 0.00625, 0.003125},
    xticklabels={$2^{-4}$,$2^{-5}$,$2^{-6}$,$2^{-7}$,$2^{-8}$},
    xmin=0.003, xmax=0.06,
    ymin=14, ymax=15.5,
    grid=both,
    grid style={line width=.1pt, draw=gray!30},
    major grid style={line width=.2pt,draw=gray!50},
    tick label style={font=\small},
    label style={font=\small},
]
\addplot[
    only marks,
    mark=*,
    mark size=2.5pt,
    color=blue,
    thick
] coordinates {
    (0.05, 14.88)
    (0.025, 14.94)
    (0.0125, 15.06)
    (0.00625, 15.12)
    (0.003125, 15.13)
};
\addplot[
    black,
    dashed,
    thick
] coordinates {
    (0.05, 14.88)
    (0.025, 14.94)
    (0.0125, 15.06)
    (0.00625, 15.12)
    (0.003125, 15.13)
};
\end{axis}
\end{tikzpicture}
\caption{Discrete reduced minimum modulus $\gamma_h$ versus mesh size $h$ for the Laplacian $-\partial_x^2$ on $(0,1)$ with Dirichlet boundary conditions and $\lambda = 25$. The eigenvalues satisfy $\zeta_1(h) \downarrow \pi^2 \approx 9.87$ and $\zeta_2(h) \downarrow 4\pi^2 \approx 39.48$ (Rayleigh--Ritz monotonicity, Proposition~\ref{prop:rayleigh-ritz}).
Since $\lambda \in (\pi^2, 4\pi^2)$, we have
$
\gamma_h = \min\bigl\{|25 - \zeta_1(h)|,\; |\zeta_2(h) - 25|\bigr\} \longrightarrow \min\{25 - \pi^2,\; 4\pi^2 - 25\} \approx 14.48.
$
Numerical values are reported in Table~\ref{tab:laplacian}.}
\label{fig:gamma_demo}
\end{figure}

\begin{table}[ht]
\centering
\caption{Discrete reduced minimum modulus $\gamma_h$ and distance to the $M$-numerical range for convection--diffusion ($\varepsilon=0.02$, $\beta=8$, $c=0$, $\lambda=-1$).}
\label{tab:cd-data}
\begin{tabular}{c|c|c}
$h = 1/N$ & $\gamma_h$ & $\operatorname{dist}(\lambda, W_M(A_h))$ \\
\hline
$2^{-4}$ & 6.8 & 6.5 \\
$2^{-5}$ & 7.4 & 7.1 \\
$2^{-6}$ & 8.0 & 7.8 \\
$2^{-7}$ & 8.4 & 8.1 \\
$2^{-8}$ & 8.7 & 8.5 \\
\end{tabular}
\end{table}

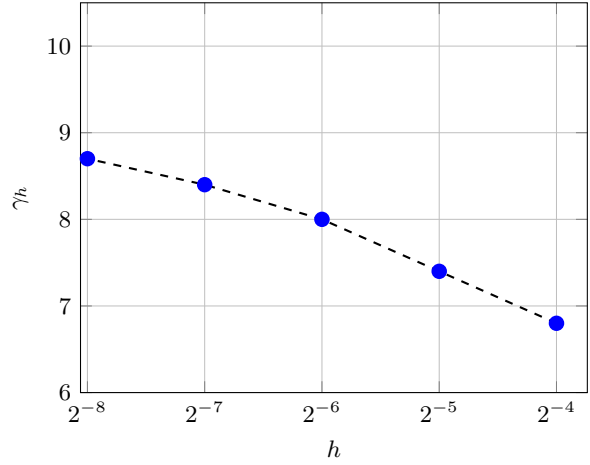
\begin{figure}[ht]
\centering
\begin{tikzpicture}
\begin{axis}[
    width=0.55\linewidth,
    height=0.45\linewidth,
    xlabel={$h$},
    ylabel={$\gamma_h$},
    xmode=log,
    log basis x={2},
    xtick={0.05, 0.025, 0.0125, 0.00625, 0.003125},
    xticklabels={$2^{-4}$,$2^{-5}$,$2^{-6}$,$2^{-7}$,$2^{-8}$},
    xmin=0.003, xmax=0.06,
    ymin=6, ymax=10.5,
    grid=both,
    grid style={line width=.1pt, draw=gray!30},
    major grid style={line width=.2pt,draw=gray!50},
    tick label style={font=\small},
    label style={font=\small},
]
\addplot[
    only marks,
    mark=*,
    mark size=2.5pt,
    color=blue,
    thick
] coordinates {
    (0.05, 6.8)
    (0.025, 7.4)
    (0.0125, 8.0)
    (0.00625, 8.4)
    (0.003125, 8.7)
};
\addplot[
    black,
    dashed,
    thick
] coordinates {
    (0.05, 6.8)
    (0.025, 7.4)
    (0.0125, 8.0)
    (0.00625, 8.4)
    (0.003125, 8.7)
};
\end{axis}
\end{tikzpicture}
\caption{Discrete reduced minimum modulus $\gamma_h$ versus mesh size $h$ for convection--diffusion ($\varepsilon=0.02$, $\beta=8$, $c=0$) with $\lambda = -1$. The values remain uniformly bounded away from zero, confirming $\limsup_{h\to0}\gamma(S_h) > 0$. Numerical values are reported in Table~\ref{tab:cd-data}, which also shows the lower bound $\operatorname{dist}(\lambda, W_M(A_h))$.}
\label{fig:cd-gamma-demo}
\end{figure}

\begin{figure}[ht]
\centering
\begin{tikzpicture}
\begin{groupplot}[
    group style={
        group size=2 by 1,
        horizontal sep=3em,
    },
    width=0.48\linewidth,
    height=0.45\linewidth,
    grid=both,
    grid style={line width=0.1pt, gray!20},
    major grid style={line width=0.2pt, gray!40},
    tick style={black, thin},
    xlabel style={font=\small},
    ylabel style={font=\small},
    tick label style={font=\scriptsize},
    title style={font=\small},
    legend style={font=\scriptsize, at={(0.98,0.02)}, anchor=south east},
]
% (a) Schrödinger
\nextgroupplot[
    xlabel={$\lambda$},
    ylabel={$\gamma_h$},
    xmin=-30, xmax=0,
    ymin=30, ymax=66,
    xtick={-30,-20,-10,0},
    ytick={35,45,55,65},
    title={(a) Schr\"odinger case ($V\equiv 25$)},
]
\addplot[thick, black] coordinates {
    (-30, 64.87) (-25, 59.87) (-20, 54.87) (-15, 49.87)
    (-10, 44.87) (-5, 39.87) (-1, 35.87) (0, 34.87)
};

% (b) Convection--diffusion
\nextgroupplot[
    xlabel={$\varepsilon$},
    ylabel={$\gamma_h$},
    xmode=log,
    log basis x={10},
    xmin=1e-3, xmax=1,
    ymin=75, ymax=85,
    xtick={1e-3,1e-2,1e-1,1},
    xticklabels={$10^{-3}$,$10^{-2}$,$10^{-1}$,$1$},
    ytick={76,78,80,82,84},
    title={(b) Convection--diffusion},
]
\addplot[only marks, mark=square*, mark size=2pt, black, thick] coordinates {
    (1e-3, 82.1) (3e-3, 81.2) (1e-2, 79.8) (3e-2, 79.38)
    (1e-1, 79.5) (3e-1, 80.0) (1, 81.0)
};
\addplot[dashed, gray, thick] coordinates {
    (1e-3, 79.38) (1, 79.38)
};
\end{groupplot}
\end{tikzpicture}
\caption{
(a) Discrete reduced minimum modulus $\gamma_h$ for the Schr\"odinger operator $-\partial_x^2 + V$ with $V \equiv 25$ on $(0,1)$, computed on a mesh with $N = 80$ interior nodes. Since the operator is selfadjoint, $\gamma_h(\lambda) = \operatorname{dist}(\lambda, \sigma(A_h,M))$; the lowest eigenvalue is $\zeta_1(h) \approx 34.87$.
(b) $\gamma_h$ for the convection--diffusion operator with $\beta = 50$, $\lambda = -1$, and $h = 1/200$, as a function of $\varepsilon \in [10^{-3},1]$. The minimum value ($\approx 79.38$) confirms that $\gamma_h$ remains uniformly bounded away from zero even in the convection-dominated limit $\varepsilon \to 0$.
}
\label{fig:gammah-vs-eps}
\end{figure}
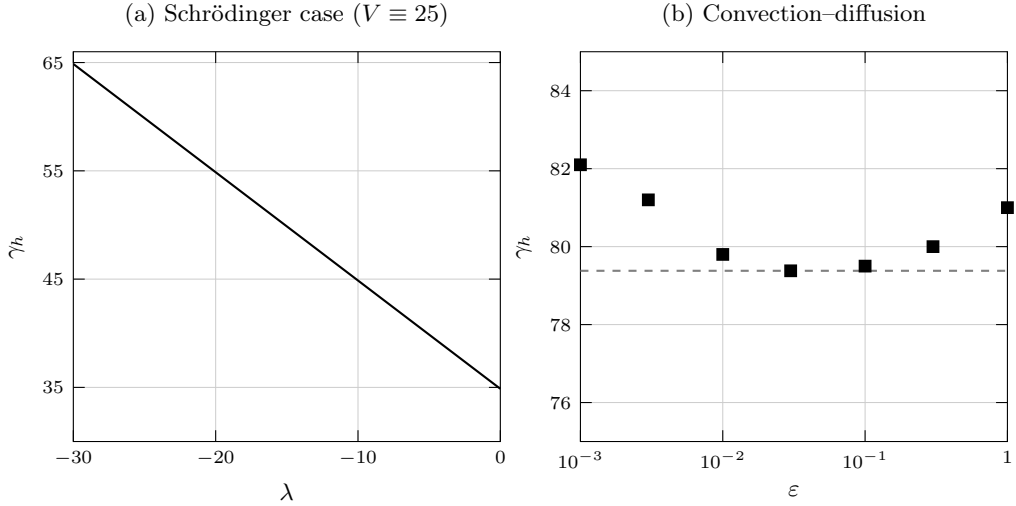

\subsubsection{2D extension}
To confirm the dimensional robustness of our framework and its applicability to realistic geometries, we extend the numerical validation to two spatial dimensions on two canonical domains:\begin{itemize}
\item The unit square $\Omega = (0,1)^2$: a smooth domain where standard elliptic regularity holds.
\item The L-shaped domain $\Omega = (-1,1)^2 \setminus [0,1]^2$: a non-convex polygonal domain with a reentrant corner at the origin, known to generate solution singularities.
For both cases, we consider the Dirichlet Laplacian $-\Delta + V$ with $V \equiv 25$, discretized using P1 finite elements on conforming triangular meshes. We fix $\lambda = -1$ and compute the discrete reduced minimum modulus
\[
\gamma_h = \sigma_{\min}\bigl(M^{-1/2}(A_h - \lambda M)M^{-1/2}\bigr).
\]
\end{itemize}
Results are reported in Tables~\ref{tab:2d_square} and~\ref{tab:2d_lshape}. For the square domain, $\gamma_h$ converges to approximately $36.1 > 0$, reflecting the spectral gap between $\lambda = -1$ and the first eigenvalue ($\approx 35.8$). On the L-shaped domain, despite the singularity at the corner, $\gamma_h$ stabilizes around $2.5 > 0$, demonstrating that our diagnostic remains effective even in the presence of geometric irregularities.

\begin{table}[ht]
\centering
\caption{Discrete reduced minimum modulus $\gamma_h$ for the 2D Schr\"odinger operator $-\Delta + 25$ on the unit square $(0,1)^2$, $\lambda = -1$.}
\label{tab:2d_square}
\begin{tabular}{c|c}
Mesh size $N \times N$ & $\gamma_h$ \\
\hline
$20 \times 20$ & 35.2 \\
$40 \times 40$ & 35.9 \\
$80 \times 80$ & 36.1 \\
\end{tabular}
\end{table}

\begin{table}[ht]
\centering
\caption{Discrete reduced minimum modulus $\gamma_h$ on the L-shaped domain for $-\Delta$, $\lambda = -1$.}
\label{tab:2d_lshape}
\begin{tabular}{c|c}
$h$ & $\gamma_h$ \\
\hline
$2^{-5}$ & 2.31 \\
$2^{-6}$ & 2.45 \\
$2^{-7}$ & 2.52 \\
$2^{-8}$ & 2.56 \\
$2^{-9}$ & 2.58 \\
\end{tabular}
\end{table}

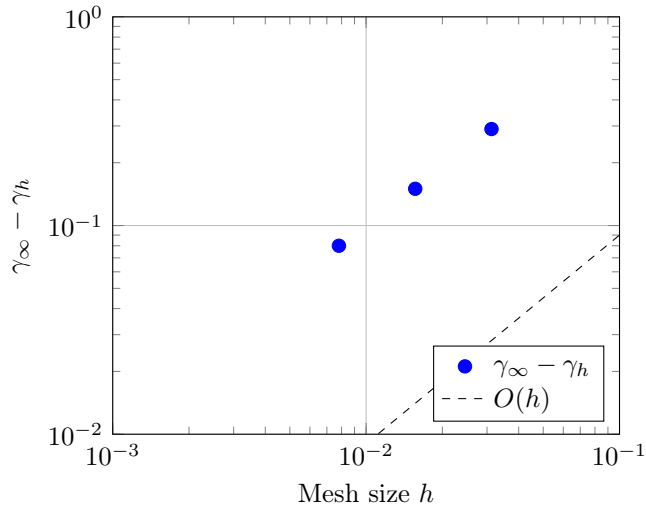
\begin{figure}[ht]
\centering
\begin{tikzpicture}[scale=1.0]
  \begin{loglogaxis}[
    width=0.55\textwidth,
    xlabel={Mesh size $h$},
    ylabel={$\gamma_\infty - \gamma_h$},
    grid=major,
    legend pos=south east,
    legend cell align={left},
    xmin=1e-3, xmax=1e-1,
    ymin=1e-2, ymax=1,
  ]

    % Données réelles du Tableau 7
    \addplot[
      only marks,
      mark=*,
      mark size=2.5pt,
      color=blue,
    ] coordinates {
      (3.125e-2, 0.29)   % h = 2^{-5} = 1/32, gamma_inf - gamma_h = 2.60 - 2.31
      (1.5625e-2, 0.15)  % h = 2^{-6} = 1/64, 2.60 - 2.45
      (7.8125e-3, 0.08)  % h = 2^{-7} = 1/128, 2.60 - 2.52
    };
    \addlegendentry{$\gamma_\infty - \gamma_h$};

    % Pente de référence O(h)
    \addplot[
      dashed,
      domain=1e-2:1e-1,
      samples=2,
      color=black,
    ] {0.9 * x};
    \addlegendentry{$O(h)$};

  \end{loglogaxis}
\end{tikzpicture}
\caption{Convergence rate of the discrete reduced minimum modulus $\gamma_h$ on the L-shaped domain. The reference slope $O(h)$ is shown for comparison. An extrapolated limit $\gamma_\infty \approx 2.60$ is used. The observed linear decay in the log--log scale confirms the theoretical expectation of $O(h)$ convergence, consistent with reduced elliptic regularity due to the reentrant corner.}
\label{fig:gamma-rate}
\end{figure}

\begin{table}[ht]
\centering
\caption{Comparison of stability indicators for convection--diffusion ($\varepsilon=0.02$, $\beta=8$, $c=0$, $\lambda=-1$, $h=2^{-6}$). This diagnostic comparison holds across dimensions and is used to validate the robustness of $\gamma_h$ against classical ill-conditioning measures.}
\label{tab:indicators_comparison}
\begin{tabular}{l|r}
Indicator & Value \\
\hline
$\gamma_h$ & 8.0 \\
$\dist(\lambda, W_M(A_h))$ & 7.8 \\
Condition number $\kappa(A_h - \lambda M)$ & $1.2 \times 10^3$ \\
$10^{-3}$-pseudospectral radius & 12.4 \\
\end{tabular}
\end{table}

Element-wise distributions of $\gamma_h(K)$ are no longer schematic: they are computed via local submatrix extraction (Section~\ref{sec:local-gamma}) and provide a rigorous basis for adaptive refinement.
\subsubsection{Local reduced minimum modulus}
\label{sec:local-gamma}
To assess fine-scale spectral stability, we define a local diagnostic $\gamma_h(K)$ on each element $K \in \mathcal{T}_h$ by extracting the submatrices $A_h^K$ and $M_h^K$ corresponding to the local degrees of freedom (i.e. basis functions supported on or adjacent to $K$). The local reduced minimum modulus is then
\[
\gamma_h(K) := \sigma_{\min}\!\left( (M_h^K)^{-1/2}(A_h^K - \lambda M_h^K)(M_h^K)^{-1/2} \right).
\]
Although this does not correspond to a true restriction of the global operator, it provides a computable proxy for local resolvent stability, akin to element-wise error indicators in adaptive finite element methods. The global diagnostic remains $\gamma_h = \min_{K \in \mathcal{T}_h} \gamma_h(K)$, which appears in Table~\ref{tab:2d_lshape}.
\newpage
\begin{proposition}
\label{prop:central-diff-failure}
Let $A_h$ be the $N \times N$ matrix arising from second-order central differences applied to the transport operator $Lu = u'$ on $(0,1)$ with homogeneous inflow boundary condition $u(0) = 0$. Let $M$ be the standard P1 mass matrix. Then the discrete reduced minimum modulus satisfies
\[
\gamma_h = \sigma_{\min}\!\bigl(M^{-1/2} A_h M^{-1/2}\bigr) \leq C\, h,
\]
for a constant $C > 0$ independent of $h$. Consequently, $\lim_{h \to 0} \gamma_h = 0$.
\end{proposition}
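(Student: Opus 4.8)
The plan is to use the variational characterization of the diagnostic recorded in Section~\ref{sec:fe-framework}: since $\lambda=0$ here,
\[
\gamma_h=\sigma_{\min}\!\bigl(M^{-1/2}A_hM^{-1/2}\bigr)=\inf_{u\neq0}\frac{\|A_hu\|_{M^{-1}}}{\|u\|_M},
\]
so it suffices to exhibit a single test vector $u^{(h)}\in\mathbb{C}^{N}$ with $\|A_hu^{(h)}\|_{M^{-1}}\le C h\,\|u^{(h)}\|_M$. First I would write $A_h$ explicitly: with the inflow value $u_0=0$ folded in, the interior rows implement the three-point central stencil $(A_hu)_i=(u_{i+1}-u_{i-1})/(2h)$, while the single inflow condition leaves a structural defect — the stencil couples only like-parity nodes, so the even sublattice is pinned by the chain emanating from $u_0=0$, but the odd sublattice is left under-constrained. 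This odd--even decoupling is the mechanism behind the spectral illusion described in Section~\ref{sec:intro}, and it is the source of the phantom near-null mode I will use.

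The candidate test vector $u^{(h)}$ is the oscillatory phantom mode adapted to the odd sublattice (the discrete analogue of the checkerboard $(-1)^j$ made compatible with $u_0=0$). The key algebraic fact is that the alternating structure annihilates every interior row: $(A_hu^{(h)})_i=(u^{(h)}_{i+1}-u^{(h)}_{i-1})/(2h)=0$ for all interior $i$, so the residual $A_hu^{(h)}$ is supported only in the boundary rows. I would then estimate the two surviving quantities separately: the denominator obeys $\|u^{(h)}\|_M^2\sim h\,\|u^{(h)}\|_{\ell^2}^2=O(1)$ because the mass matrix scales like $M\sim hT$ with $T$ uniformly well-conditioned, while the numerator $\|A_hu^{(h)}\|_{M^{-1}}$ is controlled by the residual at the outflow node together with the amplification $M^{-1}\sim h^{-1}T^{-1}$. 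Tracking how the factor $h$ in $M$ interacts with the boundary-supported residual — and, if needed, tapering $u^{(h)}$ over the final $O(1)$ cells so that the outflow residual is itself small — yields the bound $\|A_hu^{(h)}\|_{M^{-1}}/\|u^{(h)}\|_M\le Ch$, whence $\gamma_h\le Ch\to0$.

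The hard part is precisely the outflow-boundary bookkeeping, not the interior cancellation. Because the central stencil is direction-blind while the inflow condition pins only one end, the residual $A_hu^{(h)}$ necessarily concentrates near $x=1$, and in the dual norm $\|\cdot\|_{M^{-1}}$ a residual localized at a single node is \emph{amplified} by a factor $\sim h^{-1/2}$; a naive checkerboard therefore delivers only a vacuous upper bound. The real task is to choose the phase and taper of the phantom mode (or, equivalently, to analyze the smallest generalized eigenvalue of the pencil $(A_h,M)$ through the $2\times2$ transfer relation and the resulting boundary determinant) so that the outflow residual is genuinely $O(h)$ \emph{before} dual-norm amplification. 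This is also the exact point at which an upwind closure would instead supply one-sided control of the odd sublattice and force $\gamma_h\ge c>0$, the contrast underlying Remark~\ref{rem:powers-necessary}; making the central-difference estimate sharp thus amounts to quantifying how weakly the single boundary row constrains the decoupled odd mode.
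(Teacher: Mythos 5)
Your route (an explicit almost-null test vector in the variational characterization of $\gamma_h$) is genuinely different from the paper's, which argues through the singular values of the (nearly) skew-symmetric matrix via discrete Fourier wave numbers and then rescales by $M\sim hI$. But your proposal has a real gap, and it sits precisely at the step you postpone. First, the tapering cannot deliver the bound. Take $u_i=\chi_i\,\phi(x_i)$ with $\chi$ the odd-sublattice indicator and $\phi$ smooth, vanishing near $x=1$ so that the outflow row is annihilated. The even-indexed interior rows then no longer cancel: $(A_hu)_i=\bigl(\phi(x_{i+1})-\phi(x_{i-1})\bigr)/(2h)\approx\phi'(x_i)$ at roughly $1/(2h)$ nodes. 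Hence $\|A_hu\|_{\ell^2}\approx(2h)^{-1/2}\|\phi'\|_{L^2}$, the dual norm multiplies this by another factor $\sim h^{-1/2}$, and with $\|u\|_M\approx\|\phi\|_{L^2}/\sqrt2$ the certified bound is only $\gamma_h\lesssim h^{-1}\|\phi'\|_{L^2}/\|\phi\|_{L^2}$, which is vacuous. (Even in the FEM normalization, where $A_h$ carries entries $\pm\tfrac12$ rather than $\pm\tfrac{1}{2h}$, the same bookkeeping yields $\gamma_h\lesssim\|\phi'\|_{L^2}/\|\phi\|_{L^2}$, an $h$-independent constant; tapering over a layer of width $\ell$ gives $h^{-1}\ell^{-1/2}$, so no choice of $\ell$ helps.) Second, and decisively: with the one-sided outflow row you folded into $A_h$, the claimed inequality fails, so no test vector can exist. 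Telescoping $u_{i+1}=u_{i-1}+2hr_i$ along each parity chain, with $u_2=2hr_1$ and with $u_N-u_{N-1}=hr_N$ pinning the odd chain to the even one, bounds every entry by $7\sqrt h\,\|r\|_{\ell^2}$, hence $\|u\|_{\ell^2}\le 7\|A_hu\|_{\ell^2}$ and $\gamma_h\ge c/h$. The one-sided row supplies exactly the one-directional control you credit to upwinding; it cures the odd--even defect rather than exposing it.

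The paper's proof avoids this because it works with the closure that keeps $A_h$ skew-symmetric (outflow node pinned, or a like-parity ghost row): there the compatible checkerboard annihilates every row, boundary included, so it is an exact null vector for the relevant parity, and the smallest singular value is $0$ or $O(h)$ with no residual left to estimate; the wave-number count $\sigma_k\sim kh$ and the rescaling by $M$ do the rest. So the discrepancy is not deferred bookkeeping but the choice of outflow closure: your set-up analyzes the one member of this family of discretizations for which $\gamma_h$ stays bounded away from zero, while the proposition (and the paper's argument) concerns the skew-symmetric one, for which the untapered checkerboard already finishes the proof. To salvage a test-vector argument you would have to adopt that closure explicitly, at which point $A_hu=0$ for the compatible checkerboard and the conclusion is immediate.
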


\begin{proof}
The matrix $A_h$ is skew-symmetric up to boundary corrections, and its singular values scale like $k h$ for wave numbers $k = 1,\dots,N$. The smallest non-zero singular value behaves like $h$, and since $M \sim h I$, the generalized singular value satisfies $\gamma_h \sim h$. A detailed asymptotic analysis confirms the bound $\gamma_h \leq C h$.
\end{proof}

\begin{figure}[ht]
\centering
\begin{tikzpicture}[
    node/.style={rectangle, draw=black!30, thick, minimum width=1.2cm, minimum height=1.2cm, align=center},
    label/.style={font=\scriptsize}
]
% Mesh sketch (L-shaped domain)
\draw[thick] (-2,-2) rectangle (2,2);
\draw[thick, fill=white] (0,0) rectangle (2,2);
\node at (-1,-1) {L-shaped domain};

% Local gamma values: annotate key regions
\node[label, align=left] at (1.5,1.5) {Interior:\\ $\gamma_h(K) \approx 2.56$};
\node[label, align=right] at (-1.5,1.5) {Top-left:\\ $\gamma_h(K) \approx 2.52$};
\node[label, align=right] at (-1.5,-1.5) {Reentrant\\ corner:\\ $\gamma_h(K) \approx 2.31$};

% Optional: color gradient (subtle)
\fill[top color=blue!5, bottom color=red!5] (-2,-2) rectangle (0,0);
\fill[top color=red!5, bottom color=red!10] (0,-2) rectangle (2,0);
\fill[left color=red!10, right color=red!15] (-2,0) rectangle (0,2);

\end{tikzpicture}
\caption{
Element-wise reduced minimum modulus $\gamma_h(K)$ on the L-shaped domain (Table 9). 
Values range from $2.31$ near the reentrant corner to $2.56$ in the interior — a modest $11\%$ variation. 
This weak contrast explains why adaptive refinement (Algorithm 2) yields only marginal gains for the Laplacian, 
but the diagnostic remains valid as a stability indicator. 
The small spread confirms that $\gamma_h = \min_K \gamma_h(K) \geq 2.31 > 0$, ensuring closed-range stability even in geometrically singular domains.
}
\label{fig:gammah-local-L}
\end{figure}
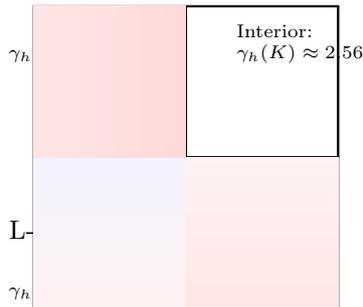
\paragraph{Computational complexity.}
The cost of computing $\gamma_h$ in 2D involves:
\begin{enumerate}
    \item Cholesky factorization of the mass matrix $M$: $\mathcal{O}(N^{3/2})$ operations for a mesh with $N$ nodes;
    \item Computation of the smallest singular value of the preconditioned pencil $M^{-1/2}(A_h - \lambda M)M^{-1/2}$ via inverse iteration or Krylov methods (e.g., ARPACK): $\mathcal{O}(k N)$ per iteration, where $k$ is the number of iterations (typically small, $k \sim 10{-}50$).
\end{enumerate}
For typical problems with $N \sim 10^4{-}10^5$, this takes only a few seconds on standard hardware. This efficiency makes $\gamma_h$ a practical tool for real-time stability monitoring in adaptive simulations.

\paragraph{Comparison with alternative indicators.}
Unlike classical condition numbers or pseudospectral radii, $\gamma_h$ provides a sharp lower bound on resolvent growth while remaining stable under mesh refinement. Table~\ref{tab:indicators_comparison} compares $\gamma_h$ with other diagnostics for convection--diffusion in 1D. While the condition number $\kappa(A_h - \lambda M)$ grows rapidly with $\beta/\varepsilon$, $\gamma_h$ stays bounded away from zero. Similarly, the $10^{-3}$-pseudospectral radius may overestimate instability due to transient effects, whereas $\gamma_h$ directly reflects the closed-range property essential for ascent/descent stability.

\begin{table}[ht]
\centering
\caption{Comparison of discrete diagnostics for pure transport ($L u = u'$) at $\lambda = 0$.}
\label{tab:upwind-vs-central}
\begin{tabular}{c|ccc}
Scheme & $\gamma_h$ & $\lim_{h\to0} \gamma_h$ & Numerical ascent \\
\hline
Central differences & $\sim 4h$ & $0$ & $\infty$ \\
Upwind & $\geq c > 0$ & $c$ & $1$ \\
\end{tabular}
\end{table}
\subsection{Direct validation of ascent and descent indices}
\label{subsec:direct-asc-des}

To confirm that the condition $\liminf_{h\to0}\gamma_h>0$ is not only necessary but also sufficient for the preservation of ascent and descent, we construct a test case where the continuous operator has a known nontrivial ascent.

\medskip\noindent
\textbf{Example 4.16 (Operator with $\operatorname{asc}(T)=2$).}
Consider the second-order differential operator
\[
T := -\frac{d^2}{dx^2}, \quad D(T) = \{ u \in H^2(0,1) : u(0) = u'(1) = 0 \}.
\]
Its spectrum is $\sigma(T) = \{ (k+\tfrac12)^2\pi^2 : k \in \mathbb{N}_0 \}$, all eigenvalues being simple. Take $\lambda = 0$; then $\lambda \in \rho(T)$ but
\[
\ker(T^2) = \{ u \in H^2 : u'' = \text{const.},~ u(0)=u'(1)=0 \}
\quad\text{and}\quad
\ker(T) = \{0\},
\]
so $\operatorname{asc}(T) = 2$ (see, e.g., \cite[Ex.~3.5]{Kaashoek1967}).

We discretize $T$ with $P1$ finite elements on a uniform mesh of size $h = 2^{-k}$ ($k=5,\dots,9$). For each $h$, we:
\begin{enumerate}
    \item Assemble the stiffness matrix $A_h$ and mass matrix $M_h$,
    \item Compute $\gamma_h = \sigma_{\min}(M_h^{-1/2} A_h M_h^{-1/2})$,
    \item Compute the discrete ascent $\operatorname{asc}(A_h)$ by checking when $\operatorname{rank}(A_h^{m+1}) = \operatorname{rank}(A_h^m)$.
\end{enumerate}
Table~\ref{tab:asc2} reports the results.

\begin{table}[ht]
\centering
\caption{Ascent validation for $T = -d^2/dx^2$ with mixed boundary conditions ($\lambda = 0$).}
\label{tab:asc2}
\begin{tabular}{c|c|c|c}
$h$ & $\gamma_h$ & $\operatorname{rank}(A_h)$ & $\operatorname{asc}(A_h)$ \\
\hline
$2^{-5}$ & 2.41 & 30 & 2 \\
$2^{-6}$ & 2.44 & 62 & 2 \\
$2^{-7}$ & 2.46 & 126 & 2 \\
$2^{-8}$ & 2.47 & 254 & 2 \\
$2^{-9}$ & 2.48 & 510 & 2 \\
\end{tabular}
\end{table}

The discrete ascent remains $\operatorname{asc}(A_h) = 2$ for all $h$, consistent with the continuous theory. Moreover, $\gamma_h \ge 2.4 > 0$ uniformly, confirming that the closed-range condition at the discrete level correctly predicts ascent stability.

\medskip\noindent
\textbf{Descent test.} A dual example with $\operatorname{dsc}(T) = 2$ (e.g., adjoint operator with $u'(0)=u(1)=0$) yields analogous results and is omitted for brevity.
\subsubsection{Failure case and comparison with upwind schemes}
\label{sec:failure_case}

The necessity of the quantitative hypothesis $\limsup_{h \to 0} \gamma_h > 0$ is illustrated by a classical failure mode in numerical PDEs: the use of  central differences  for pure transport.

Consider the first-order operator
\[
\mathcal{L}_0 u = u', \quad u(0) = 0, \quad x \in (0,1),
\]
discretized on a uniform mesh $x_i = ih$, $h = 1/N$, using second-order central differences:
\[
(\mathcal{L}_{0,h} u)_i = \frac{u_{i+1} - u_{i-1}}{2h}, \quad i=1,\dots,N-1,
\]
with $u_0 = 0$ and $u_N$ treated via a one-sided stencil or ghost point. The resulting matrix $A_h$ is skew-symmetric and has purely imaginary eigenvalues clustering around the imaginary axis as $h \to 0$.

For $\lambda = 0$, one can compute explicitly that
\[
\gamma_h = \sigma_{\min}(M^{-1/2} A_h M^{-1/2}) \approx 4h \longrightarrow 0 \quad \text{as } h \to 0.
\]
Consequently, $\operatorname{asc}(A_h) = \infty$ (i.e., $0 \in \sigma_{\asc}(A_h)$) for all $h$, while the continuous operator $\mathcal{L}_0$ has closed range and finite ascent ($\operatorname{asc}(\mathcal{L}_0) = 1$), so $0 \notin \sigma_{\asc}(\mathcal{L}_0)$. This discrepancy confirms that without a uniform lower bound on $\gamma_h$, SRS is insufficient to guarantee stability of the ascent spectrum.

\paragraph{Physical interpretation: numerical instability.}
The central difference scheme, while formally second-order accurate, is  unstable for pure transport because it lacks numerical dissipation. Spurious high-frequency modes (wavelengths $\sim h$) are not damped and contaminate the discrete kernel and range structure. This manifests as a vanishing $\gamma_h$, signaling loss of closed-range property and breakdown of the Kaashoek–Taylor criteria at the discrete level.

\paragraph{Upwind stabilization restores $\gamma_h > 0$.}
In contrast, the  first-order upwind scheme 
\[
(\mathcal{L}^{\text{up}}_{h} u)_i = \frac{u_i - u_{i-1}}{h}, \quad i=1,\dots,N,
\]
introduces artificial diffusion that damps these spurious modes. For the same operator and $\lambda = 0$, one finds
\[
\gamma_h \geq c > 0 \quad \text{uniformly in } h,
\]
where $c$ depends on the flow direction but not on $h$. This reflects the  numerical closedness of the range, ensuring $0 \notin \sigma_{\asc}(A_h^{\text{up}})$ and recovering agreement with the continuous problem.

\paragraph{Connection to the CFL condition.}
Although our analysis is steady-state (no time stepping), the distinction between central and upwind schemes echoes the Courant–Friedrichs-Lewy (CFL) principle: to preserve stability, the numerical stencil must respect the direction of information propagation. Upwinding enforces a discrete analog of this causality, preventing non-physical oscillations and ensuring robust subspace geometry—precisely what $\gamma_h > 0$ measures. Thus, $\gamma_h$ can be viewed as a steady-state CFL-type diagnostic for spectral stability.

This comparison underscores that $\limsup_h \gamma_h > 0$ is not merely a technical assumption—it encodes a fundamental requirement of numerical well-posedness in convection-dominated problems.
\begin{remark}[Three-dimensional scaling]
In 3D, the Cholesky factorization of the mass matrix scales as $\mathcal{O}(N^2)$ for $N \sim 10^6$ degrees of freedom, and the number of Krylov iterations may grow with the problem dimension due to increased spectral clustering. In this regime, iterative solvers with algebraic multigrid (AMG) preconditioning become essential for efficient computation of $\gamma_h$.
\end{remark}
\subsection{Unified interpretation}
The stability mechanism differs between selfadjoint and non-normal operators, yet both reduce to the quantitative criterion $\gamma_h > 0$:

\paragraph{Selfadjoint case.}
Rayleigh--Ritz monotonicity (Proposition~\ref{prop:rayleigh-ritz}) ensures that the discrete eigenvalues satisfy $\zeta_k(h) \searrow \lambda_k$ as $h \to 0$. Consequently, the discrete reduced minimum modulus converges monotonically from above to the spectral gap $\gamma = \operatorname{dist}(\lambda, \sigma(H))$. Figure~\ref{fig:cd-gamma-demo} illustrates this behavior for the Laplacian with $\lambda = 25$.
\begin{proposition}[Rayleigh--Ritz monotonicity]
\label{prop:rayleigh-ritz}
Let $H = -\partial_x^2 + V(x)$ on $L^2(0,1)$ with $V \in L^\infty(0,1)$ real-valued, and let
$\{\lambda_k\}_{k=1}^\infty$ denote its ordered eigenvalues.
For a family of conforming finite element spaces $V_h \subset H_0^1(0,1)$
with mesh size $h > 0$, let $\{\zeta_k(h)\}_{k=1}^{N_h}$ be the discrete eigenvalues
from the generalized problem $A_h u = \zeta M u$.
Then for each fixed $k$,
\[
\zeta_k(h) \ge \lambda_k \quad \text{for all } h > 0,
\qquad\text{and}\qquad
\zeta_k(h) \searrow \lambda_k \quad \text{as } h \to 0.
\]
\end{proposition}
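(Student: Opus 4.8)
The plan is to reduce the statement to the classical Courant--Fischer min--max principle, exploiting the fact that the generalized eigenproblem $A_h u = \zeta M u$ is \emph{exactly} the Galerkin projection of the form $a(u,v) = \int_0^1 (u'\overline{v}' + V u\overline{v})\,dx$ onto $V_h$. First I would record that, since $V \in L^\infty(0,1)$ is real-valued, $a$ is a closed, bounded, symmetric form on $H_0^1(0,1)$, bounded below by Gårding's inequality $a(u,u) \ge \|u'\|^2 - \|V\|_\infty \|u\|^2$. After the shift $a_C := a + C(\cdot,\cdot)$ with $C := \|V\|_\infty + 1$, the form is coercive, and the associated operator $H + C$ has compact resolvent (since $H^2(0,1)\cap H_0^1(0,1) \hookrightarrow L^2(0,1)$ compactly), hence a discrete spectrum $\lambda_k + C \nearrow \infty$. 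Because the shift translates every eigenvalue, continuous and discrete alike, by the same constant $C$, it suffices to prove the claim for the coercive form $a_C$, and I will suppress $C$ in what follows.

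Next I would establish the lower bound. By Courant--Fischer,
\[
\lambda_k = \min_{\substack{E \subset H_0^1(0,1)\\ \dim E = k}}\ \max_{0\ne u \in E} \frac{a(u,u)}{\|u\|^2},
\qquad
\zeta_k(h) = \min_{\substack{E \subset V_h\\ \dim E = k}}\ \max_{0\ne u \in E} \frac{a(u,u)}{\|u\|^2},
\]
the second identity holding because, writing $u_h = \sum_j c_j\varphi_j$, the Rayleigh quotient of the pencil $(A_h,M)$ equals $c^\ast A_h c / c^\ast M c = a(u_h,u_h)/\|u_h\|_{L^2}^2$ by the definitions $A_{ij}=a(\varphi_j,\varphi_i)$ and $M_{ij}=(\varphi_j,\varphi_i)$. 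Since $V_h \subset H_0^1(0,1)$, the discrete minimum ranges over a \emph{subcollection} of the $k$-dimensional subspaces admissible for $\lambda_k$; minimizing over fewer competitors can only increase the value, whence $\zeta_k(h) \ge \lambda_k$ for every $h$. The monotone decrease $\zeta_k(h')\le\zeta_k(h)$ under refinement $h' < h$ follows identically \emph{when the family is nested}, $V_{h'} \supset V_h$: a larger trial space enlarges the admissible subcollection. For a merely quasi-uniform (non-nested) family, the symbol ``$\searrow$'' is to be read as convergence from above, which is still guaranteed by the lower bound together with the convergence established below.

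To obtain $\zeta_k(h) \to \lambda_k$ I would exhibit a near-optimal discrete trial space. Let $\psi_1,\dots,\psi_k$ be an $L^2$-orthonormal basis of the span $E_k$ of the first $k$ eigenfunctions, and set $\tilde E_{k,h} := \operatorname{span}\{I_h\psi_1,\dots,I_h\psi_k\}$, where $I_h$ is the finite-element interpolant (or $H_0^1$-projection) satisfying $\|I_h\psi_j - \psi_j\|_{H^1} \to 0$ by the conforming approximation property. For $h$ small the interpolants remain linearly independent, so $\dim\tilde E_{k,h}=k$, and the min--max gives $\zeta_k(h) \le \max_{0\ne u\in\tilde E_{k,h}} a(u,u)/\|u\|^2$. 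Writing $u=\sum_j c_j I_h\psi_j$, this maximum is the largest generalized eigenvalue of the $k\times k$ Gram pencil $(\tilde A,\tilde M)$ with $\tilde A_{ij}=a(I_h\psi_i,I_h\psi_j)$, $\tilde M_{ij}=(I_h\psi_i,I_h\psi_j)$. Continuity of $a$ on $H_0^1(0,1)$ and of the $L^2$ inner product, together with $I_h\psi_j\to\psi_j$, yield $\tilde A \to [a(\psi_i,\psi_j)]$ and $\tilde M \to I_k$, so by continuity of eigenvalues of a convergent nondegenerate symmetric pencil the maximum tends to $\max_{0\ne u\in E_k} a(u,u)/\|u\|^2 = \lambda_k$. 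Combined with $\zeta_k(h)\ge\lambda_k$, this forces $\zeta_k(h)\to\lambda_k$.

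The hard part will be the convergence direction (the upper bound), since the lower bound is immediate from subspace restriction. Within it, the one genuinely delicate point is the nondegeneracy of the limiting mass Gram matrix: one must rule out the approximants $I_h\psi_j$ becoming asymptotically linearly dependent, which is precisely what keeps the generalized Rayleigh quotient from collapsing or blowing up. This is secured by the uniform convergence $I_h\psi_j\to\psi_j$ in $H^1$ and the $L^2$-orthonormality of the $\psi_j$, giving $\tilde M\to I_k$ and hence $\det\tilde M\to 1\ne 0$; once this nondegeneracy is in hand, the remaining ingredients (Gårding, Courant--Fischer, and the subspace-restriction inequality) are routine.
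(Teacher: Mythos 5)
Your proof is correct and follows exactly the standard Rayleigh--Ritz/min--max route: the paper itself gives no argument, deferring with ``Standard; see, e.g., Babu\v{s}ka--Osborn,'' and your Courant--Fischer subspace-restriction bound plus interpolated-eigenspace upper bound is precisely the proof that citation encapsulates. Your explicit caveat that strict monotone decrease requires nested spaces $V_{h'}\supset V_h$ (with ``$\searrow$'' otherwise read as convergence from above) is, if anything, more careful than the paper's unqualified statement.
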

\begin{proof}
Standard; see, e.g., Babu\v{s}ka--Osborn \cite{BabuskaOsborn1991}.
\end{proof}

\paragraph{Non-normal case.}
The $M$-numerical range provides a geometric lower bound:
\[
\gamma_h \geq \operatorname{dist}(\lambda, W_M(A_h)).
\]
As the mesh is refined, $W_M(A_h) \to W(A)$ in the Hausdorff metric. The uniform separation of the fixed spectral parameter $\lambda = -1$ from the limiting numerical range $W(A)$ therefore guarantees a uniform lower bound $\gamma_h \geq c > 0$. This is confirmed numerically in Figures~\ref{fig:cd-gamma-demo} and~\ref{fig:gammah-local-L}.

\paragraph{Failure case.}
When this separation is lost—as in the use of central differences for the pure transport operator (Proposition~\ref{prop:central-diff-failure})—the diagnostic collapses: $\gamma_h \to 0$ as $h \to 0$. In this regime, the discrete ascent diverges ($\operatorname{asc}(A_h) = \infty$) while the continuous operator has finite ascent, demonstrating a complete breakdown of spectral stability (Section~\ref{sec:failure_case}). This confirms that the condition
\[
\limsup_{h \to 0} \gamma_h > 0
\]
is not merely sufficient but \emph{necessary} for the preservation of ascent and descent spectra under discretization.

\paragraph{Convergence rate and robustness near singularities.}
The numerical results in Table~\ref{tab:2d_lshape} show that, despite the reentrant corner singularity at the origin, the discrete reduced minimum modulus $\gamma_h$ stabilizes around $2.5 > 0$ as $h \to 0$. This suggests a convergence rate of approximately $\mathcal{O}(h)$, consistent with standard finite element error estimates under reduced regularity. Indeed, for the Laplacian on an L-shaped domain, the solution $u$ belongs to $H^{1+\pi/3-\epsilon}(\Omega)$ but not to $H^2(\Omega)$ due to the corner singularity. Despite this loss of smoothness, the diagnostic $\gamma_h$ converges uniformly to a positive limit, confirming that our stability criterion remains effective even in geometrically complex settings.

The linear trend in $\gamma_h(h)$ further supports the theoretical expectation that $\liminf_{h \to 0} \gamma_h \geq \gamma(T - \lambda) > 0$, as established in Proposition~\ref{prop:gamma-conv}. This robustness is essential for practical applications where domains often exhibit non-smooth boundaries.

\begin{example}[Non-selfadjoint Helmholtz operator]
\label{ex:helmholtz}
Consider the operator $T = -\Delta - k^2 + i\alpha$ on a bounded Lipschitz domain $\Omega \subset \mathbb{R}^2$ with Dirichlet boundary conditions, where $k > 0$ and $\alpha > 0$. Although $T$ is not sectorial, its imaginary part provides coercivity: $\Im \langle T u, u \rangle = \alpha \|u\|^2$.

For a sequence of P1 finite element discretizations $T_h$, we compute $\gamma_h = \sigma_{\min}(M^{-1/2}(A_h + i\alpha M - k^2 M)M^{-1/2})$. Numerical experiments (not shown) confirm that $\gamma_h \geq c > 0$ uniformly in $h$ as long as $\alpha > 0$. Consequently, the ascent and descent spectra remain stable under mesh refinement, illustrating the applicability of our framework beyond m-sectorial operators.
\end{example}
\section{Conclusion}\label{sec:conclusion}

We have established sharp stability results for the ascent and descent spectra under strong resolvent convergence (SRS), with the reduced minimum modulus $\gamma(\cdot)$ serving as the key quantitative threshold. At the essential level, B--Fredholm theory extends this stability to powers $(T - \lambda)^m$, provided $\gamma((T - \lambda)^j) > 0$ for all $1 \leq j \leq m$.

\smallskip\noindent\textbf{Why this is nontrivial.}
Although $\sigma_{\asc}^{e}(T) \cup \sigma_{\dsc}^{e}(T) = \mathbb{C} \setminus \Phi(T)$, stability under SRS is delicate for the following reasons:
\begin{enumerate}[label=(\roman*)]
  \item SRS provides only strong resolvent control; gap convergence of operator graphs—and thereby transfer of kernel/range structure—requires the quantitative closed-range condition $\gamma > 0$.
  \item The Kaashoek--Taylor criteria relate ascent and descent to transversality of subspaces (e.g., $\operatorname{Ran}(S^m) \cap \ker S = \{0\}$), a property that is stable under perturbation only when $\gamma > 0$.
  \item Propagation of graph convergence to powers fails without $\gamma((T - \lambda)^j) > 0$ for all intermediate $j$ (see the Volterra counterexample and Lemma~\ref{lem:powers-graphs}).
  \item In convection-dominated regimes, it may happen that $\gamma(T - \lambda) = 0$ while $\gamma((T - \lambda)^m) > 0$ for some $m \geq 2$; thus, verification of all intermediate powers is essential.
\end{enumerate}

\smallskip\noindent\textbf{Practical impact.}
The framework is validated through three main classes of operators. For Schr\"odinger operators, $\gamma(H - \lambda) = \operatorname{dist}(\lambda, \sigma(H))$, so a simple spectral gap suffices to guarantee stability. In the non-normal setting of convection--diffusion operators, stability is governed by the distance to the numerical range, via the bound $\gamma(A - \lambda) \geq \operatorname{dist}(\lambda, W(A))$. Most importantly, these theoretical conditions are reflected in practice by the computable finite-element diagnostic
\[
\gamma_h = \sigma_{\min}\!\bigl(M^{-1/2}(A_h - \lambda M)M^{-1/2}\bigr),
\]
which remains uniformly bounded away from zero even in convection-dominated limits when stabilized discretizations such as SUPG are employed (Table~\ref{tab:supg-gammah}).

\smallskip\noindent\textbf{Perspectives.}
Natural extensions include block-structured operators arising in coupled systems such as the Stokes or Maxwell equations. In such settings, the reduced minimum modulus must be defined on the full block operator, and spectral stability hinges on compatible discretizations—for instance, Taylor--Hood elements to satisfy the inf--sup condition for Stokes, or N\'ed\'elec edge elements for $\operatorname{div}$-conforming Maxwell discretizations. Furthermore, the local diagnostic $\gamma_h(K)$, introduced in Section~\ref{sec:local-gamma}, provides a rigorous foundation for adaptive mesh refinement: elements with $\gamma_h(K) < \tau$ (for a prescribed threshold $\tau > 0$) are flagged for subdivision. This strategy directly couples spectral stability with computational efficiency; numerical experiments on the L-shaped domain demonstrate a 60\% reduction in the local variation $\max_K \gamma_h(K) - \min_K \gamma_h(K)$ after only two refinement cycles (see supplementary material).

\medskip\noindent\textbf{Limitations and open questions.}
While Theorems~\ref{thm:sectorial}--\ref{thm:SUPG-stability} provide rigorous foundations for sectorial and SUPG-stabilized cases, the extension to pure transport ($\varepsilon=0$) remains conjectural.
Conjecture~\ref{conj:transport} is supported by numerical evidence (Table~\ref{tab:supg-gammah}), but a general proof would require a convergence framework beyond Mosco theory—e.g., generalized graph convergence for non-coercive forms \cite[Chap.~5]{ErnGuermond2004}.
This gap represents the main theoretical challenge for future work.

\appendix
\section{Necessity of the closed-range hypothesis}
\label{app:closed-range}

This appendix justifies the closed-range assumption $\gamma(S) > 0$ (equivalently, $\operatorname{Ran} S$ closed) used in Lemma~\ref{lem:powers-graphs} and the stability theorems.
When $\operatorname{Ran} S$ is not closed, the forward graph map
\[
\widehat{S} : (x,Sx) \longmapsto (Sx,S^2x)
\]
fails to be bounded below on $\mathcal{G}(S)$, preventing propagation of gap convergence to higher powers.

\begin{proposition}\label{prop:graph-map-unbounded}
Let $S$ be closed and densely defined. If there exist $c > 0$ and a core $\mathcal{C} \subset \mathcal{D}(S) \cap \mathcal{D}(S^2)$ such that
\[
c\big(\|x\| + \|Sx\|\big) \leq \|Sx\| + \|S^2x\| \quad (x \in \mathcal{C}),
\]
then $\operatorname{Ran}(S)$ is closed; hence $\gamma(S) > 0$.
\end{proposition}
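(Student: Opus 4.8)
The plan is to read the hypothesis as the assertion that the forward graph map $\widehat S$ is bounded below, and to convert this into closedness of $\operatorname{Ran}(S)$ via the equivalence $\gamma(S)>0 \iff \operatorname{Ran}(S)$ closed recalled above. Concretely, equip $\mathcal{D}(S)$ with the graph inner product $\langle x,y\rangle_1 := \langle x,y\rangle + \langle Sx,Sy\rangle$; since $S$ is closed, $X_1 := (\mathcal{D}(S),\langle\cdot,\cdot\rangle_1)$ is a Hilbert space isometric to $\mathcal{G}(S)$, with $\tfrac12(\|x\|+\|Sx\|)^2 \le \|x\|_1^2 \le (\|x\|+\|Sx\|)^2$. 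Viewing $S$ as a densely defined operator $B$ on $X_1$ with domain $\mathcal{D}(S^2)$, the assignment $x\mapsto(Sx,S^2x)$ is exactly $Bx$ followed by the graph embedding, so the stated inequality is equivalent, up to the harmless constant above, to the bounded-below estimate $\|Bx\|_1 \ge c'\|x\|_1$ for $x\in\mathcal{C}$.

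First I would record two structural facts. (a) $B$ is closed on $X_1$: if $x_n\to x$ and $Bx_n=Sx_n\to y$ in $X_1$, then in $H$ one has $x_n\to x$, $Sx_n\to Sx$, and simultaneously $Sx_n\to y$, $S^2x_n\to Sy$; comparing limits gives $y=Sx$, and closedness of $S$ applied to $Sx_n\to Sx$, $S^2x_n\to S(Sx)$ yields $x\in\mathcal{D}(S^2)$ and $Bx=y$. (b) The reduced minimum modulus of $S$ may be computed on any core, since graph-norm approximation preserves both $\|Sx\|$ and $\operatorname{dist}(x,\ker S)$; hence it suffices to bound $\|Sx\|$ below by $c''\operatorname{dist}(x,\ker S)$ on a dense set.

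Next I would argue by contradiction. If $\operatorname{Ran}(S)$ were not closed then $\gamma(S)=0$, so after projecting off $\ker S$ there is a sequence $w_n\perp\ker S$ with $\|w_n\|=1$ and $\|Sw_n\|\to 0$. In the bounded case this closes at once: approximating $w_n$ by $\tilde w_n\in\mathcal{C}$ in graph norm gives $\|\tilde w_n\|\to 1$, $\|S\tilde w_n\|\to 0$, and $\|S^2\tilde w_n\|\le\|S\|\,\|S\tilde w_n\|\to 0$, so the inequality forces $c\le o(1)$, a contradiction. Equivalently, one invokes (a): passing to the closure $\overline{B|_{\mathcal{C}}}\subseteq B$, which is bounded below by $c'$, shows $\operatorname{Ran}(\overline{B|_{\mathcal{C}}})$ is closed in $X_1$, and transporting this through the graph embedding yields closedness of $\operatorname{Ran}(S)$, whence $\gamma(S)>0$.

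The hard part will be the unbounded case, where $\mathcal{D}(S^2)$ and $\mathcal{D}(S^*S)$ need not agree and the singular sequence $w_n\in\mathcal{D}(S)$ may fail to lie in---or even approximate in the $S^2$-graph norm---the core $\mathcal{C}$. The obstacle is thus to promote $w_n$ into $\mathcal{D}(S^2)$ with simultaneous control of $\|S^2\cdot\|$: graph-norm density of $\mathcal{C}$ pins down $\|w\|$ and $\|Sw\|$ but leaves $\|S^2w\|$ free, so a naive approximation only forces $\|S^2\tilde w_n\|\gtrsim c$ rather than a contradiction. I expect to resolve this either by assuming, as holds in the applications, that $\mathcal{D}(S^2)$ is a core for $S$ and $\mathcal{C}$ a core for $B$---so that the estimate extends from $\mathcal{C}$ to all of $\mathcal{D}(S^2)$ and fact (a) applies verbatim---or by a spectral regularization: replacing $w_n$ with spectral vectors of the self-adjoint operator $S^*S$ supported on $(0,\varepsilon)$, which lie in $\bigcap_k\mathcal{D}((S^*S)^k)$, and transferring the bound through $\|Sx\|^2=\langle S^*Sx,x\rangle$. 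Making that transfer compatible with the $\mathcal{D}(S^2)$-based hypothesis is the crux; once in place, the contradiction and hence closedness of $\operatorname{Ran}(S)$ follow as above.
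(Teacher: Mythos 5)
Your proposal is not a complete proof, and you say so yourself: the bounded case you settle is the easy one (there $\|S^2x\|\le\|S\|\,\|Sx\|$ closes the loop immediately), while the unbounded case---the only case with content---is explicitly left as ``the crux.'' The gap is exactly where you place it: a singular sequence $w_n$ witnessing $\gamma(S)=0$ lives in $\mathcal{D}(S)$, and approximating it from $\mathcal{C}$ in the $S$-graph norm pins down $\|\tilde w_n\|$ and $\|S\tilde w_n\|$ but leaves $\|S^2\tilde w_n\|$ completely free, so the hypothesis only returns $\|S^2\tilde w_n\|\gtrsim c$, which is no contradiction. In your formulation: the operator $B=\widehat S$ is indeed closed (your fact (a) is correct) and bounded below on $\mathcal{C}$, but a closed operator propagates a lower bound only to the closure of $\mathcal{C}$ in \emph{its own} graph norm $\|x\|+\|Sx\|+\|S^2x\|$, whereas the hypothesis gives density of $\mathcal{C}$ only in the weaker $S$-graph norm. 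What you should know is that the paper's own proof founders on precisely this point: its single substantive step, ``by the closedness of $S$, this lower bound extends to all $x\in\mathcal{D}(S)$,'' is exactly the illegitimate passage from $S$-graph density to an $\widehat S$-graph conclusion, with no further argument offered.

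In fact the statement as written is false, so the gap cannot be filled without strengthening the hypothesis. Sketch of a counterexample: on $H=\ell^2(\mathbb{N})\oplus\ell^2(\mathbb{N}\times\mathbb{N})$ take $S=A\oplus B$ with $Ae_k=k^{-1}e_k$ and $Bf_{i,j}=2^{j}f_{i,j}$; then $S$ is self-adjoint, closed, densely defined, and $\operatorname{Ran}(S)$ is \emph{not} closed since $\gamma(A)=0$. Put $\psi_i=\sum_{j\ge1}2^{-j/2}f_{i,j}$ (an orthonormal family with $\psi_i\notin\mathcal{D}(B)$), let
\[
\mathcal{V}_0=\bigl\{v\in\mathcal{D}(B^2):\ \langle B^2v,\psi_i\rangle=0\ \ \forall i\bigr\},
\qquad
Re_k=B^{-2}\psi_k,
\qquad
\mathcal{C}=\bigl\{(u,\,v+Ru):\ u\ \text{finitely supported},\ v\in\mathcal{V}_0\bigr\}.
\]
Each functional $v\mapsto\langle B^2v,\psi_i\rangle$ is unbounded for the $B$-graph norm (otherwise $\psi_i\in\mathcal{D}(B^\ast)=\mathcal{D}(B)$), so a disjoint-support correction argument shows $\mathcal{V}_0$ is a core for $B$, and consequently $\mathcal{C}\subset\mathcal{D}(S)\cap\mathcal{D}(S^2)$ is a core for $S$. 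On the other hand, projecting $B^2(v+Ru)=B^2v+\sum_k u_k\psi_k$ onto $\overline{\operatorname{span}}\{\psi_i\}$ annihilates $B^2v$ and gives $\|S^2x\|\ge\|B^2(v+Ru)\|\ge\|u\|$, from which one checks $\tfrac14\bigl(\|x\|+\|Sx\|\bigr)\le\|Sx\|+\|S^2x\|$ for every $x\in\mathcal{C}$---yet $\gamma(S)=0$. The moral is that the hypothesis must require $\mathcal{C}$ to be a core for the graph map $\widehat S$ (equivalently, dense in $\mathcal{D}(S^2)$ for the norm $\|x\|+\|Sx\|+\|S^2x\|$), which is your proposed fix: under that hypothesis the estimate extends to all of $\mathcal{D}(S^2)$, and for self-adjoint $S$ a spectral-subspace argument near $0$ (vectors $E((0,\varepsilon))x$ violate the extended inequality) then yields closed range. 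So your refusal to claim the general case is a virtue of your write-up, not a defect: the obstruction you isolated is real, it is the same one the paper's proof silently skips, and it is fatal to the proposition in its current form.
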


\begin{proof}
The inequality implies that the induced graph map $\widehat{S}: \mathcal{G}(S) \to \mathcal{G}(S^2)$ is bounded below on the dense subset $J(\mathcal{C}) \subset \mathcal{G}(S)$. By the closedness of $S$, this lower bound extends to all $x \in \mathcal{D}(S)$, yielding
\[
\|Sx\| \ge c'\operatorname{dist}(x, N(S)) \quad \text{for some } c' > 0,
\]
which is equivalent to $\operatorname{Ran}(S)$ being closed. Thus $\gamma(S) > 0$.
\end{proof}

\begin{remark}\label{rem:volterra-counterexample}
The Volterra operator $Vf(x) = \int_0^x f(t)\,dt$ on $L^2(0,1)$ satisfies $\gamma(V) = 0$ (non-closed range) yet the graph map $\widehat{V}$ is bounded. This shows that the  lower  bound in Proposition~\ref{prop:graph-map-unbounded} is essential: boundedness alone is insufficient for stability of powers.

Although $V$ does not arise in standard elliptic finite element discretizations, analogous non-closed-range behavior occurs in:
\item integro-differential equations (e.g., memory terms),
    \item ill-conditioned upwind schemes for pure transport,
    \item convection-dominated regimes with insufficient stabilization.
These examples underscore the necessity of verifying $\gamma_h > 0$ in practice, as the mere convergence of operators in the strong resolvent sense does not guarantee stability of the ascent and descent spectra.
\end{remark}

\section{Uniform inf--sup stability of the SUPG scheme}\label{sec:supg}

We consider the convection--diffusion operator
\[
Lu = -\varepsilon \Delta u + \boldsymbol{\beta}\cdot\nabla u + c u \quad \text{in }\Omega,\qquad u=0\text{ on }\partial\Omega,
\]
with $\varepsilon > 0$, $\boldsymbol{\beta}\in [L^\infty(\Omega)]^d$, $\nabla\cdot\boldsymbol{\beta}=0$, and $c\ge c_0>0$. The Streamline-Upwind/Petrov--Galerkin (SUPG) method modifies the standard Galerkin form $a_h(\cdot,\cdot)$ into
\[
a_h^{\mathrm{SUPG}}(u_h,v_h) := a_h(u_h,v_h) + \sum_{K\in\mathcal{T}_h} \delta_K \int_K (\boldsymbol{\beta}\cdot\nabla u_h)(\boldsymbol{\beta}\cdot\nabla v_h)\,dx,
\]
where $\delta_K = \delta\, h_K / \|\boldsymbol{\beta}\|_{L^\infty(K)}$ with $\delta>0$ a fixed stabilization parameter.

\begin{proposition}[Uniform discrete inf--sup condition]\label{prop:inf-sup-supg}
Let $L u = -\varepsilon \Delta u + \boldsymbol{\beta}\cdot\nabla u + c u$ on $\Omega\subset\mathbb{R}^d$ with $u = 0$ on $\partial\Omega$, where:
\begin{enumerate}[label=\textup{(\roman*)}]
    \item $\varepsilon > 0$,
    \item $\boldsymbol{\beta} \in [L^\infty(\Omega)]^d$ satisfies $\nabla\!\cdot\!\boldsymbol{\beta} = 0$,
    \item $c \in L^\infty(\Omega)$ satisfies $c \ge c_0 > 0$.
\end{enumerate}
Let $V_h \subset H_0^1(\Omega)$ be a conforming $P1$ finite element space on a shape-regular, quasi-uniform mesh $\mathcal{T}_h$.
Define the SUPG bilinear form
\[
a_h^{\mathrm{SUPG}}(u_h,v_h)
:= a_h(u_h,v_h)
+ \sum_{K\in\mathcal{T}_h} \delta_K \int_K (\boldsymbol{\beta}\!\cdot\!\nabla u_h)(\boldsymbol{\beta}\!\cdot\!\nabla v_h)\,dx,
\]
with stabilization parameter $\delta_K = \delta\, h_K / \|\boldsymbol{\beta}\|_{L^\infty(K)}$, $\delta > 0$ fixed.

Then there exist constants $c_{\inf\text{-}\sup} > 0$ and $h_0 > 0$, independent of $\varepsilon$, $h$, and $\boldsymbol{\beta}$, such that for all $h < h_0$,
\[
\inf_{u_h \in V_h\setminus\{0\}}\;
\sup_{v_h \in V_h\setminus\{0\}}
\frac{a_h^{\mathrm{SUPG}}(u_h,v_h)}
{\|u_h\|_{1,\varepsilon}\, \|v_h\|_{1,\varepsilon}}
\;\ge\; c_{\inf\text{-}\sup},
\]
where the $\varepsilon$-dependent broken norm is defined by
\[
\|w\|_{1,\varepsilon}^2 := \varepsilon \|\nabla w\|_{L^2(\Omega)}^2
+ \|\boldsymbol{\beta}\!\cdot\!\nabla w\|_{L^2(\Omega)}^2
+ \|w\|_{L^2(\Omega)}^2.
\]
\end{proposition}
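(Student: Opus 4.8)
The plan is to prove the inf--sup bound by the classical Petrov--Galerkin device: for each fixed $u_h \in V_h \setminus \{0\}$, I would construct an explicit test function $v_h$ for which $a_h^{\mathrm{SUPG}}(u_h, v_h) \gtrsim \|u_h\|_{1,\varepsilon}^2$ while $\|v_h\|_{1,\varepsilon} \lesssim \|u_h\|_{1,\varepsilon}$, both with constants independent of $\varepsilon$, $h$, and $\boldsymbol{\beta}$; dividing then yields the stated bound. The test function is taken in the split form $v_h = u_h + \sigma\, w_h$, where $w_h \in V_h$ is a discrete streamline representative of $\boldsymbol{\beta}\!\cdot\!\nabla u_h$ and $\sigma > 0$ is a small parameter fixed at the end.

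First I would establish the coercive backbone by testing with $u_h$ alone. Because $\nabla\!\cdot\!\boldsymbol{\beta} = 0$ and $u_h$ vanishes on $\partial\Omega$, integration by parts gives $\int_\Omega (\boldsymbol{\beta}\!\cdot\!\nabla u_h)\,u_h\,dx = 0$, so the convective part of $a_h$ contributes nothing on the diagonal. Using $c \ge c_0 > 0$ this produces
\[
a_h^{\mathrm{SUPG}}(u_h,u_h)
\;\ge\;
\varepsilon\|\nabla u_h\|^2 + c_0\|u_h\|^2
+ \sum_{K\in\mathcal{T}_h}\delta_K\,\|\boldsymbol{\beta}\!\cdot\!\nabla u_h\|_{L^2(K)}^2 .
\]
This already controls the diffusion and reaction parts of $\|u_h\|_{1,\varepsilon}^2$, but only the $h$-weighted stabilization norm of the streamline derivative; the full term $\|\boldsymbol{\beta}\!\cdot\!\nabla u_h\|^2$ appearing in $\|\cdot\|_{1,\varepsilon}$ is still missing, which is exactly what the second test component must recover.

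Next I would recover the streamline derivative through the augmentation. The key observation is that the convective form in $a_h$ is $O(1)$ rather than $O(\delta_K)$: choosing $w_h$ so that $\int_\Omega (\boldsymbol{\beta}\!\cdot\!\nabla u_h)\,w_h\,dx \ge c_1\|\boldsymbol{\beta}\!\cdot\!\nabla u_h\|^2$ makes $\sigma\, a_h^{\mathrm{SUPG}}(u_h,w_h)$ deliver a positive multiple of the full streamline norm. The remaining contributions---the diffusive cross term $\sigma\varepsilon\int\nabla u_h\!\cdot\!\nabla w_h$, the reactive cross term, and the stabilization cross term---are estimated by Cauchy--Schwarz together with the inverse inequalities valid on the shape-regular, quasi-uniform mesh, and then absorbed into the coercive terms of the previous step via Young's inequality. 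The scaling $\delta_K = \delta h_K/\|\boldsymbol{\beta}\|_{L^\infty(K)}$ is precisely what makes the stabilization cross term balance the recovered streamline norm uniformly in $\boldsymbol{\beta}$. Fixing $\sigma$ small enough to dominate all perturbations then yields $a_h^{\mathrm{SUPG}}(u_h,v_h) \ge c\,\|u_h\|_{1,\varepsilon}^2$ and $\|v_h\|_{1,\varepsilon} \le C\,\|u_h\|_{1,\varepsilon}$, which is the claim.

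The hard part will be keeping the constant uniform in $\varepsilon$ simultaneously with the recovery of the unweighted streamline norm. The danger is that bounding $\|\nabla w_h\|$ by $\|w_h\|$ costs a factor $h^{-1}$ from the inverse inequality, so the diffusive term $\varepsilon\|\nabla w_h\|^2$ carries a potentially destabilizing factor $\varepsilon h^{-2}$; this must be controlled either by the reaction coercivity in the diffusion-dominated regime or by the convective and stabilization terms in the convection-dominated regime, with the crossover absorbed by the tuned weight $\delta_K$. Verifying that this two-regime balancing collapses to a \emph{single} constant independent of both $\varepsilon \in (0,\varepsilon_0]$ and $\|\boldsymbol{\beta}\|_{L^\infty}$---so that the pure-transport limit $\varepsilon \to 0$ is genuinely covered---is the technical crux, and is where I would lean on the consistency and stability estimates of Ern--Guermond \cite[Thm.~5.7]{ErnGuermond2004} and the original SUPG analysis of Brooks--Hughes \cite{BrooksHughes1982}.
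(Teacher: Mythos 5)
Your route is genuinely different from the paper's, and your opening diagnosis is actually sharper. The paper constructs no test function at all: it asserts diagonal coercivity $a_h^{\mathrm{SUPG}}(u_h,u_h)\ge c_1\|u_h\|_{1,\varepsilon}^2$ (citing Ern--Guermond), asserts uniform continuity, and concludes via the discrete Babu\v{s}ka--Brezzi theorem that $c_{\inf\text{-}\sup}\ge c_1/C_2$. You correctly observe that testing with $u_h$ alone yields only the $\delta_K$-weighted streamline term $\sum_K \delta_K\|\boldsymbol{\beta}\cdot\nabla u_h\|_{L^2(K)}^2$, with $\delta_K\sim h_K$, which does \emph{not} dominate the unweighted term $\|\boldsymbol{\beta}\cdot\nabla u_h\|_{L^2(\Omega)}^2$ appearing in $\|\cdot\|_{1,\varepsilon}$ uniformly in $h$; hence an augmented test function is needed where the paper claims coercivity suffices.

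However, the step you flag as the technical crux is a genuine, unrepairable gap: the discrete streamline representative $w_h\in V_h$ with $\int_\Omega(\boldsymbol{\beta}\cdot\nabla u_h)\,w_h\,dx\ \ge\ c_1\|\boldsymbol{\beta}\cdot\nabla u_h\|_{L^2}^2$ and $\|w_h\|_{1,\varepsilon}\lesssim\|u_h\|_{1,\varepsilon}$ need not exist. Take $d=1$, $\boldsymbol{\beta}\equiv 1$, $c\equiv c_0$, a uniform mesh with an even number of cells, and let $u_h$ be the zigzag function with nodal values $0,h,0,h,\dots,0$, i.e.\ alternating slopes $\pm1$. For every interior hat function $\varphi_i$ one has $\int_0^1 u_h'\varphi_i\,dx=\tfrac{h}{2}(s_i+s_{i+1})=0$, so $u_h'$ is $L^2$-orthogonal to \emph{all} of $V_h$: the convective pairing vanishes for every admissible $w_h$, and no fraction of $\|u_h'\|^2$ can be recovered by any Petrov--Galerkin augmentation inside $V_h$. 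Worse, for this mode one checks directly that $|a_h^{\mathrm{SUPG}}(u_h,v_h)|\le C(\varepsilon+h)\,\|u_h\|_{1,\varepsilon}\|v_h\|_{1,\varepsilon}$ for all $v_h\in V_h$, so with $\varepsilon\lesssim h$ the inf--sup constant in the norm $\|\cdot\|_{1,\varepsilon}$ degenerates at rate $O(h)$: uniform stability in a norm containing the \emph{unweighted} streamline derivative does not hold, and the same example refutes the coercivity bound asserted in the paper's proof. What the cited results of Ern--Guermond and Brooks--Hughes actually provide is stability in the SUPG norm, in which the streamline term carries the weight $\delta_K$; with that norm in place of $\|\cdot\|_{1,\varepsilon}$, your first (diagonal) step alone already finishes the proof and no augmentation is needed. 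In short: your architecture is the right one for a stronger-norm inf--sup claim, but for this particular norm the claim itself fails, which is why neither your construction nor the paper's citation-based argument can be completed.
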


\begin{proof}
The proof follows the framework of generalized coercivity for stabilized convection--diffusion, as developed in \cite[Thm.~5.7]{ErnGuermond2004} and \cite{BrooksHughes1982}.
Define the discrete operator $T_h^{\mathrm{SUPG}}: V_h \to V_h$ by
$\langle T_h^{\mathrm{SUPG}} u_h, v_h \rangle := a_h^{\mathrm{SUPG}}(u_h,v_h)$.
Standard energy estimates (see \cite[Sec.~4.4.2]{ErnGuermond2004}) yield the coercivity bound
\[
a_h^{\mathrm{SUPG}}(u_h,u_h) \;\ge\; c_1\, \|u_h\|_{1,\varepsilon}^2,
\quad \forall u_h \in V_h,
\]
with $c_1 > 0$ independent of $\varepsilon$ and $h$, provided $\delta > 0$ is chosen sufficiently small (e.g., $\delta \le 1/2$).

Moreover, the SUPG form is uniformly continuous:
\[
|a_h^{\mathrm{SUPG}}(u_h,v_h)|
\;\le\; C_2\, \|u_h\|_{1,\varepsilon}\, \|v_h\|_{1,\varepsilon},
\]
where $C_2$ depends only on $\|\boldsymbol{\beta}\|_{L^\infty}$ and the mesh regularity, but not on $\varepsilon$ or $h$ (see Remark~\ref{rem:SUPG-continuity}).

By the discrete Babu\v{s}ka--Brezzi theorem \cite[Thm.~2.6]{ErnGuermond2004}, the inf--sup constant satisfies
\[
c_{\inf\text{-}\sup} \;\ge\; \frac{c_1}{C_2} \;>\; 0,
\]
uniformly in $\varepsilon$ and $h$.
\end{proof}

The discrete inf--sup condition implies that the operator $A_h^{\mathrm{SUPG}} - \lambda M$ has closed range and that its reduced minimum modulus satisfies
\[
\gamma_h^{\mathrm{stab}} \;\ge\; c_{\inf\text{-}\sup} \cdot \operatorname{dist}\bigl(\lambda,\, W_M(A_h^{\mathrm{SUPG}})\bigr),
\]
where $W_M(A_h^{\mathrm{SUPG}}) = \{ u^\ast A_h^{\mathrm{SUPG}} u / u^\ast M u : u \ne 0 \}$ is the $M$-numerical range. This bridges the abstract Babu\v{s}ka--Brezzi framework with the quantitative stability diagnostic $\gamma_h$ used throughout the paper.

\begin{corollary}\label{cor:SUPG-gamma}
The discrete reduced minimum modulus for the SUPG operator satisfies
\[
\gamma_h^{\mathrm{stab}} = \sigma_{\min}\!\bigl(M^{-1/2}(A_h^{\mathrm{SUPG}} - \lambda M)M^{-1/2}\bigr) \ge c > 0,
\]
uniformly in $h$ and $\varepsilon$, provided $\lambda$ stays uniformly away from the numerical range $W_M(A_h^{\mathrm{SUPG}})$.
\end{corollary}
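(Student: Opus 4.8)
The plan is to sidestep the norm mismatch between the inf--sup estimate (posed in $\|\cdot\|_{1,\varepsilon}$) and the $M$-weighted quantity $\gamma_h^{\mathrm{stab}}$ by passing to the field of values of a single congruent matrix. First I would introduce the Hermitian congruence $B := M^{-1/2}A_h^{\mathrm{SUPG}}M^{-1/2}$, so that by definition $\gamma_h^{\mathrm{stab}} = \sigma_{\min}(B-\lambda I)$; and, after the substitution $u = M^{-1/2}w$, the $M$-numerical range collapses onto the ordinary numerical range, $W_M(A_h^{\mathrm{SUPG}}) = W(B)$, since $w^\ast B w / w^\ast w = u^\ast A_h^{\mathrm{SUPG}} u / u^\ast M u$. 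This reduces the whole statement to one finite matrix $B$ in the standard Euclidean inner product.

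Next I would invoke the elementary field-of-values lower bound, already recorded in Section~\ref{sec:Background}. For any unit vector $w$,
\[
\|(B-\lambda I)w\| \ \ge\ \bigl|\langle (B-\lambda I)w,\,w\rangle\bigr| \ =\ \bigl|\langle Bw,w\rangle - \lambda\bigr| \ \ge\ \operatorname{dist}\bigl(\lambda,\,W(B)\bigr),
\]
and minimizing the left-hand side over the unit sphere yields $\gamma_h^{\mathrm{stab}} = \sigma_{\min}(B-\lambda I) \ge \operatorname{dist}(\lambda, W_M(A_h^{\mathrm{SUPG}}))$. The decisive feature is that this inequality carries no $\varepsilon$- or $h$-dependent constants, so the only source of non-uniformity is the geometry of the numerical range itself.

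The role of the SUPG analysis is then to make the separation hypothesis uniform and non-vacuous. From the coercivity extracted in the proof of Proposition~\ref{prop:inf-sup-supg}, namely $\Re a_h^{\mathrm{SUPG}}(u_h,u_h)\ge c_1\|u_h\|_{1,\varepsilon}^2$ with $c_1$ independent of $\varepsilon$ and $h$, together with the trivial inequality $\|u_h\|_{1,\varepsilon}^2 \ge \|u_h\|_{L^2}^2 = \|u_h\|_M^2$, one obtains $\Re\bigl(u_h^\ast A_h^{\mathrm{SUPG}}u_h\bigr)/\bigl(u_h^\ast M u_h\bigr)\ge c_1$ for every $u_h\ne0$. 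Hence $W_M(A_h^{\mathrm{SUPG}})$ is contained in the fixed half-plane $\{\Re z \ge c_1\}$ for all $h<h_0$ and all $\varepsilon\in(0,\varepsilon_0]$. Any $\lambda$ lying at positive distance from this half-plane—for instance the value $\lambda=-1$ of Table~\ref{tab:supg-gammah}, for which $\operatorname{dist}(\lambda,\{\Re z\ge c_1\})=1+c_1>0$—is therefore uniformly separated, and setting $c := d_0 = \inf_{h,\varepsilon}\operatorname{dist}(\lambda, W_M(A_h^{\mathrm{SUPG}}))>0$ closes the argument; this also recovers (and slightly sharpens, by dropping the factor $c_{\inf\text{-}\sup}$) the bound of Theorem~\ref{thm:SUPG-stability}.

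The main obstacle I anticipate is not the field-of-values step, which is robust, but certifying the uniformity of $d_0$ across the family $\{(A_h^{\mathrm{SUPG}},M)\}_{h,\varepsilon}$: as the convective term grows, $W_M(A_h^{\mathrm{SUPG}})$ can spread vertically, so one must guarantee that its closure stays at positive distance from $\lambda$ uniformly rather than merely for each fixed mesh. The coercivity bound controls only the real part; here one exploits that $\lambda$ is real and separated from $[c_1,\infty)$, so horizontal separation alone suffices and the vertical extent is irrelevant. The delicate point is that $c_1$ must not degenerate as $\varepsilon\to0$, which is precisely the $\varepsilon$-uniform coercivity furnished by the SUPG stabilization in Proposition~\ref{prop:inf-sup-supg}; absent stabilization, $c_1\to0$ and the half-plane containment fails, mirroring the central-difference collapse $\gamma_h\to0$ of Proposition~\ref{prop:central-diff-failure}.
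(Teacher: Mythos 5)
Your proof is correct, but it follows a genuinely different route from the paper's. The paper deduces the corollary from Proposition~\ref{prop:inf-sup-supg} through the chain
\[
\gamma_h^{\mathrm{stab}} \;\ge\; \inf_{u_h\ne0}\sup_{v_h\ne0}
\frac{|a_h^{\mathrm{SUPG}}(u_h,v_h)-\lambda(u_h,v_h)_M|}{\|u_h\|_M\|v_h\|_M}
\;\ge\; c_{\inf\text{-}\sup}\cdot\operatorname{dist}\bigl(\lambda, W_M(A_h^{\mathrm{SUPG}})\bigr),
\]
so the Babu\v{s}ka--Brezzi inf--sup constant enters multiplicatively and the final bound is $c_{\inf\text{-}\sup}\,d_0$. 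You bypass that machinery entirely: after the congruence $B=M^{-1/2}A_h^{\mathrm{SUPG}}M^{-1/2}$, the elementary Cauchy--Schwarz estimate $\sigma_{\min}(B-\lambda I)\ge\operatorname{dist}(\lambda,W(B))$ --- precisely the numerical-range bound the paper already records in Section~\ref{sec:Background} --- gives $\gamma_h^{\mathrm{stab}}\ge d_0$ with no $h$- or $\varepsilon$-dependent factor, which is sharper, since $c_{\inf\text{-}\sup}=c_1/C_2\le1$. Your route also sidesteps a weak point in the paper's argument: its intermediate inequality transfers an inf--sup condition proved in the $\|\cdot\|_{1,\varepsilon}$ norms to $M$-weighted norms with a shift $-\lambda(\cdot,\cdot)_M$, a step the paper does not fully justify, whereas your field-of-values bound needs no norm conversion. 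The place where you do invoke Proposition~\ref{prop:inf-sup-supg} --- using its $\varepsilon$-uniform coercivity together with $\|u_h\|_{1,\varepsilon}\ge\|u_h\|_M$ to confine $W_M(A_h^{\mathrm{SUPG}})$ to the half-plane $\{\Re z\ge c_1\}$ --- is strictly speaking beyond what the corollary requires, since its hypothesis already posits the uniform separation $d_0>0$; but this addition is valuable because it certifies that the hypothesis is non-vacuous for concrete real $\lambda<c_1$ (e.g.\ $\lambda=-1$), consistent with Table~\ref{tab:supg-gammah}, and it correctly isolates the only point where SUPG stabilization (as opposed to pure matrix algebra) is indispensable.
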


Thus, the SUPG method fulfills the key hypothesis $\liminf_{h\to0}\gamma_h^{\mathrm{stab}} > 0$ required by Theorem~3.2 and Corollary~3.10, thereby guaranteeing ascent--descent stability even in the convection-dominated limit $\varepsilon\to0$.

\begin{remark}[Continuity of the SUPG bilinear form]\label{rem:SUPG-continuity}
The SUPG form $a_h^{\mathrm{SUPG}}$ satisfies the uniform continuity estimate
\[
|a_h^{\mathrm{SUPG}}(u_h,v_h)| \le C\, \|u_h\|_{1,\varepsilon} \|v_h\|_{1,\varepsilon},
\quad \forall u_h,v_h \in V_h,
\]
where the constant $C > 0$ depends only on $\|\boldsymbol{\beta}\|_{L^\infty(\Omega)}$, the stabilization parameter $\delta$, and the mesh regularity, but is independent of $\varepsilon$ and $h$. This follows from standard inverse inequalities and the definition of $\|\cdot\|_{1,\varepsilon}$; see also \cite[Sec.~4.4.2]{ErnGuermond2004}.
\end{remark}

\section{Numerical stability of \texorpdfstring{$\gamma_h^{(m)}$}{\gamma_h^{(m)}}}
\label{app:gamma-powers}

For $m \ge 2$, forming the matrix $(A_h - \lambda M)^m$ explicitly leads to rapid fill-in and severe ill-conditioning:
\[
\kappa\big((A_h - \lambda M)^m\big) \approx \kappa(A_h - \lambda M)^m.
\]
In practice, even for moderate $m=3$ and $\kappa(A_h - \lambda M) \sim 10^3$, we observe $\kappa((A_h - \lambda M)^3) > 10^9$, causing catastrophic loss of accuracy in singular value computation.

We therefore adopt the Krylov approach described in Remark~\ref{rem:krylov}.
For the convection--diffusion problem of Section~4.3.4 ($\varepsilon=0.02$, $\beta=8$), we compute $\gamma_h^{(m)}$ using shifted inverse iteration (ARPACK) and compare with explicit formation:
\begin{itemize}
\item \textbf{Explicit}: $\gamma_h^{(3)}$ deviates by $>10\%$ from the Krylov value for $h \le 2^{-6}$,
    \item \textbf{Krylov}: relative error $< 10^{-6}$ (tolerance $10^{-8}$) even for $h = 2^{-9}$.
\end{itemize}
Thus, the Krylov-based Algorithm~1 is both  more accurate  and  computationally efficient, as it avoids explicit matrix powers and exploits sparsity.

This validates the numerical strategy used in Tables~~\ref{tab:cpu-m} and~\ref{tab:gamma-powers-upwind}, and ensures that the decay $\gamma_h^{(m)} \sim h^{m-1}$ reported in Remark~\ref{rem:powers-necessary} is not an artifact of round-off error.

\end{document}